\documentclass[a4paper, 12pt]{amsart}

\pagestyle{plain}
\usepackage{amsmath}
\usepackage{amssymb}
\usepackage{amsthm}
\usepackage[usenames,dvipsnames]{xcolor}

\usepackage[final]{showkeys}

\usepackage{url}
\usepackage{ mathrsfs }

\definecolor{DeepBlue}{rgb}{0,0,.7}
\usepackage[dvipdfm, setpagesize=false]{hyperref}
\hypersetup{colorlinks=true, citecolor=Sepia, linkcolor=DeepBlue, urlcolor=DeepBlue, pdfborder= 0 0 0}
\hypersetup{setpagesize=false}
\usepackage[all]{hypcap}
\usepackage[all]{xy}

\newcommand{\HH}{\mathbb{H}}
\newcommand{\SO}{\textnormal{SO}}
\newcommand{\PSL}{\textnormal{PSL}}

%{\mathrm{ker}\ }

\newcommand{\R}{\mathbb{R}}

\newcommand{\Ob}{\mathcal{O}}
\newcommand{\C}{\mathbb{C}}
\newcommand{\Z}{\mathbb{Z}}

\newcommand{\w}[1]{\widetilde{#1}}

\newcommand{\bs}{\backslash}

\newcommand{\al}{\alpha}

\newcommand{\Id}{\text{\textnormal{Id}}}

\newcommand{\End}{\text{End}}
\newcommand{\tr}{\text{\textnormal{tr}}}
\newcommand{\diag}{\text{\textnormal{diag}}\thinspace}
\newcommand{\Tr}{\text{\textnormal{Tr}}}

\newcommand{\vol}{\text{\textnormal{vol}}}

\newcommand{\Ad}{\text{Ad}}
\newcommand{\RRe}{\text{Re}}

\newcommand{\N}{\mathbb{N}}
\newcommand{\bsm}{\left( \begin{smallmatrix}}
\newcommand{\esm}{\end{smallmatrix} \right)}

\usepackage{accents}

\newtheorem{Th}{Theorem}[section]
\newtheorem{Lemma}[Th]{Lemma}
\newtheorem{Proposition}[Th]{Proposition}
\newtheorem{Corollary}[Th]{Corollary}
\theoremstyle{definition}
\newtheorem{lem}{Lemma}[section]
\newtheorem{remark}[lem]{Remark}
\newtheorem{definition}{Definition}[section]

\newtheorem{exmp}{Example}[section]

\begin{document}

\title{Analytic torsion of finite volume hyperbolic orbifolds}
\author{Ksenia Fedosova}
%\thanks{...}
\address{Max Planck Institute for Mathematics\\ Vivatgasse 7\\ 53111 Bonn \\ Germany}

\email{fedosova@math.uni-bonn.de}

%\date{20.09.09}

\begin{abstract}
In this article we define the analytic torsion of finite volume orbifolds $\Gamma \backslash \mathbb{H}^{2n+1}$ and study its asymptotic behavior with respect to certain rays of representations.
\end{abstract}

\maketitle

\setcounter{tocdepth}{1}
\section{Introduction}

The goal of this article is an attack on the problem of growth of torsion in the cohomology of arithmetic groups.
This problem was first studied \cite{MM} in the following setting. Let $\Gamma$ be a cocompact discrete torsion free arithmetic subgroup of $G = \textnormal{SL}_2(\C)$ and let $\textnormal{Sym}^n$ be the $n$-th symmetric power of the standard representation of $\textnormal{SL}_2(\C)$. It was shown that for every even $n$ there exists a lattice $M_n \subset S^n(\C^2)$, the $n$~-~th symmetric power of $\C^2$, which is invariant under $\Gamma$. The main result of \cite{MM} is that the order of the second cohomology $|H^2(\Gamma, M_n)|$ grows exponentially as $n \to \infty$. Later the result was extended \cite{MP111} to the case where $\Gamma$ is a cocompact  torsion free arithmetic subgroup of $G = \textnormal{SO}^0(p,q)$ or $\textnormal{SL}_3(\R)$; 
it was shown that for a certain sequence of arithmetic $\Gamma$-modules $\mathcal{M}_n$ with $n \in \N$ the cohomology $|H^j(\Gamma, \mathcal{M}_n)|$ grows exponentially at least for one $j$.

One of the key ingredients in both proofs is to rewrite the Reidemeister torsion $\tau_X$ in terms of the torsion in the certain cohomology groups as follows. Let $X=\Gamma \bs G / K$, where $G$ and $\Gamma$ are as an above and $K$ is a maximal compact subgroup of $G$; let $\rho_n:G \to GL(V)$ be an irreducible finite dimensional representation of $G$; and let $M_n \subset V$ be a lattice which is stable under $\Gamma$ and denote by $\mathcal{M}_n$ the associated local system of free $Z$-modules over $X$. Then the Reidemeister torsion $\tau_X$ of a manifold $X$ can be written as
$$ \tau_X = R \cdot \prod_{q=0}^n |H^q(X, \mathcal{M}_n)_{tors}|^{(-1)^{q+1}},$$
where the regulator $R$ is expressed in terms of the basis 
 of $H^*(X,\mathcal{M})_{free}$ and the $L^2$-metric on $\mathcal{H}^*(X,E)$. Further one uses the Cheeger-M\"uller theorem to study the growth of the analytic torsion $T(\chi,h)$ instead of the Reidemeister torsion $\tau_X(\chi,h)$. The growth of the analytic torsion as the local system varies has already been studied in different settings: for compact hyperbolic 3-manifolds \cite{Mu2} and odd-dimensional hyperbolic manifolds \cite{MP2} (these references were used in the mentioned \cite{MM, MP111}); compact hyperbolic orbifolds \cite{Fe2}; and finite volume hyperbolic manifolds \cite{MP}. In the last article authors imposed the requirement on $\Gamma$ to be neat in the sense of Definition \ref{neatness}. It turns out that many important arithmetic groups are not neat, for example $\textnormal{SL}(2, \Z \oplus i \Z)$. The goal of the article is to drop the requirement of neatness and obtain the following theorem:
\begin{Th}\label{adacupcakeprincess}
Let $\Ob = \Gamma \bs \HH^{2n+1}$ be a finite volume hyperbolic  orbifold with $\Gamma \subset \SO_0(1,2n+1)$. For $m \in \N$, let $\tau(m)$ be a finite-dimensional irreducible representation of $\SO_0(1,2n+1)$ from Definition \ref{mimirep}
and $\tau'(m)$ be the restriction of $\tau(m)$ to $\Gamma$.
Let $E_{\tau(m)}\to \Ob$ be the associated flat vector orbibundle, and denote by $T_\Ob (\tau(m))$
 its analytic torsion as in Definition \ref{defineanal}. Then there exists $C(n)$ that depends only on $n$ such that  
 \begin{equation}\label{asexp}
\log T_\Ob (\tau(m)) = C(n)\cdot \vol(\Ob)\cdot m \cdot \dim(\tau(m))  + O(m^{\frac{(n+1)n}{2}} \log(m)) 
 \end{equation}
 as $\quad m \to \infty$.
\end{Th}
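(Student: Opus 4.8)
The plan is to express the analytic torsion through the spectral zeta functions of the Hodge--de Rham Laplacians and to analyse these by means of a trace formula adapted to the finite volume orbifold $\Ob$. Following Definition \ref{defineanal} I would write
$$\log T_\Ob(\tau(m)) = \frac{1}{2}\sum_{p=0}^{2n+1}(-1)^p\, p\, \frac{d}{ds}\Big|_{s=0}\zeta_p(s;\tau(m)),$$
where $\zeta_p(s;\tau(m))$ is the zeta function of the Laplacian $\Delta_p$ on $E_{\tau(m)}$-valued $p$-forms, defined through a regularized trace of $e^{-t\Delta_p}$ since $\Ob$ is non-compact. The regularized heat trace is then decomposed by the Selberg trace formula into an identity contribution proportional to $\vol(\Ob)$, a hyperbolic contribution indexed by closed geodesics, an elliptic contribution indexed by the finitely many conjugacy classes of torsion elements of $\Gamma$, and the parabolic and continuous-spectrum contributions attached to the cusps. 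Because $2n+1$ is odd, the Hodge star identifies $\Delta_p$ with $\Delta_{2n+1-p}$ and reduces the number of independent zeta functions; here the representations of Definition \ref{mimirep} are chosen non-self-dual so that the weighted alternating sum does not collapse, exactly as in the cocompact orbifold case of \cite{Fe2}.

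The leading term comes entirely from the identity contribution. Its Mellin transform reproduces the computation on the universal cover $\HH^{2n+1}$ verbatim, so the corresponding derivative at $s=0$ equals $C(n)\,\vol(\Ob)\,m\,\dim(\tau(m))$ with the same universal constant $C(n)$ as in the cocompact setting of \cite{MP2,Fe2}; one uses the explicit Plancherel density of $\SO_0(1,2n+1)$ together with the Weyl dimension formula for $\tau(m)$. This step is insensitive to the cusps and to the torsion of $\Gamma$, and it yields precisely the main term of \eqref{asexp}.

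It remains to bound the four remaining contributions by $O(m^{\frac{(n+1)n}{2}}\log(m))$. The hyperbolic contribution is controlled through the logarithmic derivative of the Selberg zeta function evaluated at the points prescribed by the infinitesimal character of $\tau(m)$, whose values grow polynomially of degree at most $\frac{(n+1)n}{2}$, as in \cite{MP2}. The elliptic contribution is a finite sum of orbital integrals, each being the character $\Theta_{\tau(m)}$ evaluated on a torsion element times a Plancherel-type density for its centralizer; since the fixed-point set of a nontrivial elliptic element is a proper totally geodesic subspace, the relevant density involves strictly fewer spectral variables and the growth is again $O(m^{\frac{(n+1)n}{2}})$. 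The parabolic contribution is estimated by the analogous orbital-integral bound restricted to the cusp subgroups. The only source of the logarithm is the continuous-spectrum contribution, which is governed by the determinant of the scattering matrices and its logarithmic derivative along the critical line; bounding its growth uniformly in $m$ produces the factor $m^{\frac{(n+1)n}{2}}\log(m)$.

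The main obstacle is exactly this last step: establishing uniform-in-$m$ estimates for the continuous-spectrum term in the presence of torsion. Unlike the neat finite volume case of \cite{MP}, the cusp cross-sections are now finite volume flat \emph{orbifolds}, so the scattering matrices arise from Eisenstein series attached to orbifold cusps and the intertwining integrals must be estimated together with the elliptic orbital integrals that reach into the cusps. The required bounds follow from the explicit Plancherel density and the Harish-Chandra $c$-function combined with polynomial estimates on $\Theta_{\tau(m)}$ along the ray, but arranging them so that the elliptic and scattering terms are controlled simultaneously is the delicate part; once these estimates are in place, summing the four error contributions and adding the identity term gives \eqref{asexp}.
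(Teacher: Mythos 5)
Your outline matches the paper's strategy at the top level (regularized heat trace, Selberg trace formula, identity term gives the main term, everything else is error), but it has a genuine gap exactly where the paper does its real work: the contribution of the non-unipotent cusp stabilizers, i.e.\ the cuspidal elliptic elements $\gamma\in\Gamma_M(P)$ of Definition \ref{15:21}, which is the whole point of dropping neatness. In the paper these enter the geometric side of Hoffmann's \emph{invariant} trace formula as two extra terms $\mathcal{E}^{cusp}$ and $\mathcal{J}^{cusp}$ in (\ref{trace7}), not as part of the continuous-spectrum/scattering term, and they are handled by a concrete mechanism that your proposal does not contain: the weighted orbital integral $J_L(\gamma,\cdot)$ is split (Proposition \ref{orbringherbacktome}) into an invariant distribution $I_L(\gamma,\cdot)$ plus a non-invariant correction built from intertwining operators; the non-invariant piece $\mathcal{J}^{cusp}$ collapses, because every $\gamma\in\Gamma_M(P)$ has finite order so its character factor is independent of $\lambda$ (Remark \ref{apocalyptiss}), to the same shape as the unipotent term $J$ already bounded in \cite{MP}; and the invariant piece is attacked through Hoffmann's explicit Fourier transform, rewritten via Lemma \ref{kisa} as a finite sum of digamma functions $\psi(a_j+ib_j\lambda)$ (Theorem \ref{FTorbint}). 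The Mellin transforms of the resulting Gaussian-digamma integrals are computed in Lemmas \ref{22:00} and \ref{Gammacontr}, producing $\log\Gamma(a_j+b_j\lambda_{\tau(m),k})$ terms that Stirling's formula bounds by $O(m\log m)$ (Theorem \ref{wagnerreloaded}).

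By contrast, you lump this difficulty into ``uniform-in-$m$ estimates for the continuous-spectrum term in the presence of torsion,'' propose to get them from the Plancherel density, the Harish-Chandra $c$-function and character bounds, and then concede that ``arranging them \dots is the delicate part; once these estimates are in place\dots''. That deferred step is precisely the theorem being proved, so the proposal is not a proof: it identifies the obstacle but supplies no mechanism for the cuspidal elliptic weighted orbital integrals, and it also misplaces them structurally (they are geometric-side distributions attached to $\Gamma\cap P$, requiring the invariant trace formula to even be well-defined objects, rather than a feature of the scattering matrix). A secondary, smaller inaccuracy: the elliptic (interior) contribution is bounded in \cite{Fe2} by $O(m^{n(n-1)/2})$, not merely $O(m^{n(n+1)/2})$, though this does not affect the final error term.
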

Unfortunately, so far we can study only the growth of the analytic torsion $T_\Ob(\tau(m))$, because there is no Cheeger-M\"uller theorem for orbifolds. However, there are partial results avaliable \cite{AlbinRochonSher,Lipnowski,Vertman}.

Let us now describe the proof of Theorem \ref{adacupcakeprincess}.
The first problem to define the analytic torsion. Let $\Delta_p(\tau(m))$ be the Hodge-Laplacian on $\Lambda^p(\Ob, E_{\tau(m)})$ as in Section \ref{Beweis}. Because the heat operator $e^{-t \Delta_p(\tau(m))}$ is not of trace class, we cannot define the
analytic torsion via  the usual zeta function regularization. But we can define a regularized trace $\Tr_{reg} e^{-t \Delta_p(\tau(m))}$ as in \cite{Park} or \cite{MP}.
Moreover, it turns out that the regularized trace equals the spectral 
side of the Selberg trace formula applied to the heat operator 
$e^{-t\Delta_p(\tau(m))}$. This implies that the asymptotic expansions $\Tr_{reg} e^{-t \Delta_p(\tau(m))}$ as 
$t\to +0$ and as $t\to\infty$. The existence of the asymptotics expansion allows us to define the spectral zeta
function $\zeta_{p}(s;\tau)$ as in the case of compact manifolds via the Mellin transform of the regularized trace. Moreover, as in the compact case the zeta function $\zeta_p(s;\tau(m))$ is regular at $s=0$, and we can 
define the 
analytic torsion $T_\Ob(\tau(m))\in\R^+$ with respect to  $E_{\tau}$ by
\begin{equation}\label{def-tor3}
T_\Ob(\tau(m)):=\exp{\left( \left.\frac{1}{2}\sum_{p=1}^{d}(-1)^pp\frac{d}{ds}
\zeta_{p}(s;\tau(m))\right|_{s=0}\right)}.
\end{equation}
We assume
that the highest weight $\tau_{n+1}$ of $\tau(m)$ satisfies $\tau_{n+1}\neq0$. Let
\begin{align*}
K(t,\tau(m)):=\sum_{p=0}^{2n+1}(-1)^p \,
p \, \Tr_{reg}(e^{-t\Delta_p(\tau(m))}).
\end{align*}
As the spectral zeta function $\zeta_p(s;\tau(m))$ is expressed via the Mellin transform of the heat kernel $K(t,\tau(m))$, we need to 
compute the Mellin transform of 
$K(t,\tau(m))$ at 0 to study the analytic torsion. For this we use the invariant Selberg trace formula  \cite{hoff} to express  $K(t,\tau)$ 
as:
\begin{equation}\label{trace7}
\begin{gathered}
K(t,\tau(m))=I(t;\tau(m))+H(t;\tau(m))+T(t;\tau(m))+\\ \mathcal{I}(t;\tau(m))+J(t;\tau(m))+
E(t;\tau(m))+\mathcal{E}^{cusp}(t;\tau(m))+\mathcal{J}^{cusp}(t;\tau(m)),
\end{gathered}
\end{equation}
where $I(t;\tau(m))$, $H(t;\tau(m))$ and $E(t;\tau(m))$ are the contributions of  identity, hyperbolic and elliptic conjugacy classes of $\Gamma$, respectively; $T(t;\tau(m))$, $\mathcal{I}(t;\tau(m))$ and $J(t;\tau(m))$ are
tempered
distributions which
are constructed out of the parabolic conjugacy classes of $\Gamma$; $\mathcal{E}^{cusp}(t;\tau(m))$ and $\mathcal{J}^{cusp}(t;\tau(m))$ are tempered distributions appearing due to the presence of non-unipotent stabilizers of the cusps of $\Ob$. 
Now we evaluate the Mellin transform of each term separately. It turns out that the leading term of the asymptotic expansion (\ref{asexp}) 
comes from $MI(\tau(m))$, which is a Mellin transform of $I(t;\tau(m))$ evaluated at zero.
It was proved in \cite{MP} that the contribution of $H(t;\tau(m))+T(t;\tau(m))+\mathcal{I}(t;\tau(m))+J(t;\tau(m))$ to the analytic torsion $T_\Ob(\tau(m))$ is of order $O(m^{\frac{(n+1)n}{2}} \log(m))$.
The contribution of elliptic elements to $T_\Ob(\tau(m))$ was studied in \cite{Fe2} and does not affect the leading term of (\ref{asexp}) as well. We are left with studying $\mathcal{J}^{cusp}(t;\tau(m))$ and $\mathcal{E}^{cusp}(t;\tau(m))$. The former distribution can be treated in a similar way as $J^{cusp}(t;\tau(m))$. The latter distribution is invariant and its Fourier transform was computed explicitly by Hoffmann \cite{HO}, which allows us to study its Mellin transform. 

The paper is organized as follows. In Section \ref{sectionprel} we provide the main definitions and notations. In Sections \ref{Eis} and \ref{sectionaboutSTF} we recall the theory of Eisenstein series and state the invariant Selberg trace formula, respectively. In Section \ref{woint} we express the weighted orbital integrals in a convenient form and prove the asymptotic expansion of the regularized heat trace. We define the analytic torsion and prove Theorem~\ref{adacupcakeprincess} in Section \ref{Beweis}.

\subsection{Acknowledgments} This paper is a part of the author's thesis, therefore she is grateful to her supervisor Werner M\"uller. I would like to heartly thank Werner Hoffmann for the kind and patient explanations about the Fourier transform of weighted orbital integrals.

\section{Preliminaries}
\label{sectionprel}

The purpose of this section is to introduce main notation and definitions, most importantly we define certain rays of representations of a discrete group acting on the hyperbolic space and give a classification of elements of $\Gamma \subset SO_0(1,2n+1)$ that appear in the Selberg trace formula. 
\subsection{Lie groups.}\label{subsection1}\label{whatareyoudoingman}
Let $G = \SO_0(1,2n+1)$, $K=\SO(2n+1)$. Denote $G = NAK$ be the standard Iwasawa decomposition of $G$, hence for each $g \in G$ there are uniquely determined elements $n(g) \in N$, $a(g) \in A$, $\kappa(g) \in K$ such that
$$ g = n(g) a(g) \kappa(g).$$
Let $M$ be the centralizer of $A$ in $K$, then 
$$M = \SO(2n).$$ 
Denote the Lie algebras of $G$, $K$, $A$, $M$ and $N$ by 
$\mathfrak{g}$,
$\mathfrak{k}$,
$\mathfrak{a}$,
$\mathfrak{m}$
and
$\mathfrak{n}$, respectively. Define the standard Cartan involution $\theta:\mathfrak{g} \to \mathfrak{g}$ by
$$\theta(Y) = - Y^t, \quad Y \in \mathfrak{g}.$$ Let $H: G \to \mathfrak{a}$ be defined by
\begin{equation}
\label{H-definition}
H(g) := \log a(g).
\end{equation}
Equipped with a certain invariant metric, $G/K$ is isometric to the hyperbolic space $\HH^{2n+1}$; see e.g. \cite[p.~6]{MP111}.

\subsection{Hyperbolic orbifolds.}
\label{hyporbsubsec}
 Consider a discrete subgroup $\Gamma \subset G$, $G=\SO_0(1,2n+1)$ such that $\Ob = \Gamma \bs \HH^{2n+1}$ is of finite volume. Recall the classification of the elements in $\Gamma$.
\begin{definition}
An element $\gamma \in \Gamma$ is called hyperbolic if 
$$ l(\gamma) := \inf_{x \in \HH^{2n+1}} d(x, \gamma x) > 0,$$
where $d(x,y)$ denotes the hyperbolic distance between $x$ and $y$.
\end{definition}
\begin{remark}
Some authors use the term "loxodromic" instead of hyperbolic.
\end{remark}
\begin{definition}
An element $\gamma \in \Gamma$ is called  elliptic if it is of finite order.
\end{definition}
An alternative definition is the following: an element $\gamma$ is elliptic if and only if it is conjugate to an element in $K$, so without loss of generality we may assume~$\gamma$ is of the form: 
\begin{equation}\label{pelmeni}
\gamma = \diag(\overbrace{\left( \begin{smallmatrix}
1 & 0  \\
0 & 1  \end{smallmatrix} \right), \ldots, \left( \begin{smallmatrix}
1 & 0  \\
0 & 1  \end{smallmatrix} \right)}^d, \overbrace{R_{\phi_{d+1}}, \ldots, R_{\phi_{n+1}}}^{n-d+1}),
\end{equation}
 where $R_\phi = \left(      \begin{smallmatrix} \cos \phi & \sin \phi \\ -\sin \phi & \cos \phi \end{smallmatrix}  \right)$.
\begin{definition}
An elliptic element $\gamma$ is regular if the stabilizer $G_\gamma$ of $\gamma$ in $\Gamma$ equals $\SO_0(1,1) \times \SO(2)^{n-1}$.
\end{definition} 
\begin{remark}
The calculation of the ordinary and weighted orbital integrals corresponding to elliptic elements appearing in the right hand side of the Selberg trace formula depends on whether an elliptic element is regular; see \cite{Fe1} and Section \ref{woint}.
\end{remark}
Let $\mathfrak{P}$ be a fixed
set of representatives of $\Gamma$-nonequivalent proper cuspidal parabolic 
subgroups of~$G$. If $\Gamma \bs \HH^{2n+1}$ is of finite volume, then the number of cusps $\kappa := \#\mathfrak{P}$ is finite. Without loss 
of generality we can assume that $P_{0}:=MAN\in\mathfrak{P}$. 
For every $P\in\mathfrak{P}$, there exists $k_{P}\in K$ such that
\[
P=N_{P}A_{P}M_{P}
\]
with $N_{P}=k_{P}Nk_{P}^{-1}$, $A_{P}=k_{P}Ak_{P}^{-1}$, and
$M_{P}=k_{P}Mk_{P}^{-1}$. 
Note that the structure of a cusp corresponding to a cuspidal parabolic subgroup $P \in \mathfrak{P}$ depends on $\Gamma \cap P$.  (to Dmitry: I am reviewing well known material only in this paragraph)
\begin{definition}
\label{neatness}
The group $\Gamma$ is neat if $\Gamma \cap P = \Gamma \cap N_P$.
\end{definition}
If $\Gamma$ is neat, then the cross-section of a cusp $P$ is a torus. The known results about the analytic torsion of $\Gamma \bs \HH^{2n+1}$ require the group $\Gamma$ to be neat \cite{MP2, Park}. As this excludes various important arithmetic groups, we wish to allow $\Gamma$ to be not neat. For example, we allow $\Gamma$ to have elements of the following type:
\begin{definition}
Let $\gamma \in \Gamma$ be an elliptic element. If there exists $P \in \mathfrak{P}$ such that $\gamma \in \Gamma \cap P$, then $\gamma$ is called a cuspidal elliptic element.
\end{definition}
\begin{exmp}
Let $G = \textnormal{SL}_2(\C)$, then the group $\Gamma = \textnormal{PSL}_2(\Z\oplus (-1+i \sqrt{3})\Z / 2)$ is not neat, as $\left( \begin{smallmatrix}
(-1+i \sqrt{3})/2 & 0  \\
0 & (1+i \sqrt{3})/2  \end{smallmatrix} \right)$ is a cuspidal elliptic element. The cross-section of the only cusp is an orbifold with 3 singular points of order 3.
\end{exmp}
Recall that a Levi component $L$ is a centralizer of $A$ in $G$, thus $L = MA$. In order to formulate the Selberg trace formula, we need to introduce the following set of elements of $\Gamma$:
\begin{definition}\label{15:21}
Denote by $\Gamma_M(P)$ the set of projections to  $L$  of $\Gamma \cap P$.
\end{definition}
\begin{remark}
\label{marinad}
By \cite[p. 5]{War}, $\Gamma \cap P \subset MN$, hence $\Gamma_M(P) \subset M$. This implies that the set $\Gamma_M(P)$ is finite and each its element is of finite order.
\end{remark}
\begin{remark}
The set of $\Gamma$-conjugacy classes of cuspidal elliptic elements of $\Gamma$ does not necessarily coincide with $\Gamma_M(P)$. For example, take $G = \textnormal{SL}(2,\C)$, $\Gamma=\textnormal{PSL}(2,\Z[i])$. Then $P = \left( \begin{smallmatrix}
\cdot & \cdot \\ 0 & \cdot
\end{smallmatrix}\right)$, $MA=\left(\begin{smallmatrix}
e^{i \phi + r} & 0 \\ 0 & e^{-i \phi - r}
\end{smallmatrix} \right)$, where $r >0$ and $\phi\in[0,2 \pi)$, and $\Gamma \cap P = \left(\begin{smallmatrix}
i & \cdot \\ 0 & -i
\end{smallmatrix} \right)$. Hence $\Gamma_M(P) = \left(\begin{smallmatrix}
i & 0 \\ 0 & -i
\end{smallmatrix} \right)$.

On the other hand, $\gamma \in \left(\begin{smallmatrix}
i & 1 \\ 0 & -i
\end{smallmatrix} \right) \in \Gamma$ is an elliptic element that stabilizes the cusp, but it is not $\Gamma$-conjugated to $\left(\begin{smallmatrix}
i & 0 \\ 0 & -i
\end{smallmatrix} \right)$. To prove this, let 
$\bsm a & b \\ c & d \esm \in \textnormal{SL}(2, \Z[i])$ 
such that 
$\bsm a & b \\ c & d \esm^{-1} \cdot \bsm i & 1 \\ 0 & -i \esm \cdot \bsm a & b \\ c & d \esm = \bsm i & 0 \\ 0 & -i \esm$. The last equality implies that $c=0$, $d = -2 i b$, hence $ a = -1 / (2id)$. It follows that $a$ and $d$ cannot belong to $\Z[i]$ simultaneously.
\end{remark}
 To finish this subsection we recall the following lemma:
 \begin{Lemma}[Selberg lemma]\label{umschreiben}
 A cofinite group $\Gamma \subset \SO_0(1,2n+1)$ has a normal torsion free subgroup $\Gamma'$ of finite index.
 \end{Lemma}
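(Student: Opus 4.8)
The statement is Selberg's lemma, and the plan is to reduce it to the classical fact that a finitely generated subgroup of $\mathrm{GL}_N(\C)$ has a normal torsion-free subgroup of finite index, applied with $N=2n+2$ via the inclusions $\SO_0(1,2n+1)\subset\mathrm{GL}_{2n+2}(\R)\subset\mathrm{GL}_{2n+2}(\C)$. First I would record that a cofinite $\Gamma$ is finitely generated: as a lattice in the rank-one group $\SO_0(1,2n+1)$ it admits a fundamental polyhedron with finitely many faces, which yields a finite generating set (alternatively one invokes the general finite generation of lattices in semisimple Lie groups). Fixing generators $\gamma_1,\dots,\gamma_r$, let $R\subset\C$ be the subring generated by all matrix entries of the $\gamma_i$ and of the $\gamma_i^{-1}$. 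Then $R$ is a finitely generated $\Z$-algebra and an integral domain of characteristic $0$, and $\Gamma\subset\mathrm{GL}_{2n+2}(R)$.

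Next I would pass to a non-archimedean place. Choosing a maximal ideal $\mathfrak{m}\subset R$, the residue field $R/\mathfrak{m}$ is finite of some characteristic $p$ by the Nullstellensatz over $\Z$; completing and, if necessary, dominating $\mathfrak{m}$ by a discrete valuation, one obtains (a standard consequence of the same Nullstellensatz) an embedding of $R$ into the ring of integers $\mathfrak{o}$ of a finite extension of $\Q_p$, with uniformizer $\pi$, normalized valuation $v$ (so $v(\pi)=1$), ramification index $e=v(p)$, and finite residue field. Since $R$ contains the entries of the generators and of their inverses, this gives $\Gamma\subset\mathrm{GL}_{2n+2}(\mathfrak{o})$. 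Fixing an integer $j>e/(p-1)$, set
\[
\Gamma' \;=\; \Ker\bigl(\Gamma\longrightarrow \mathrm{GL}_{2n+2}(\mathfrak{o}/\pi^{j})\bigr).
\]
As the kernel of a homomorphism to a finite group, $\Gamma'$ is automatically normal in $\Gamma$ and of finite index; it remains only to prove that $\Gamma'$ is torsion-free.

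This torsion-freeness is the heart of the matter and the step I expect to require the most care. Suppose $g\in\Gamma'$ has finite order; replacing $g$ by a suitable power I may assume it has prime order $\ell$ and $g\neq I$, and I write $g=I+N$, where every entry of $N$ lies in $\pi^{j}\mathfrak{o}$, so the Gauss valuation $w(N):=\min_{i,k}v(N_{ik})$ satisfies $w(N)=m\geq j$ and $N\neq0$. If $\ell\neq p$, the eigenvalues of $g$ are nontrivial $\ell$-th roots of unity, yet $g\equiv I$ modulo $\pi$ forces each such $\zeta$ to reduce to $1$; then $0=\Phi_\ell(\zeta)\equiv\Phi_\ell(1)=\ell \pmod{\pi}$, so $p\mid\ell$ and hence $\ell=p$, a contradiction. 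If $\ell=p$, I expand $0=g^{p}-I=\sum_{k=1}^{p}\binom{p}{k}N^{k}$ and compare valuations: the term $pN$ has $w(pN)=e+m$ exactly, each middle term $\binom{p}{k}N^{k}$ with $2\le k\le p-1$ has valuation at least $e+2m$ because $p\mid\binom{p}{k}$, and $N^{p}$ has valuation at least $pm$. The choice $j>e/(p-1)$ gives $(p-1)m\geq(p-1)j>e$, so $pm>e+m$ and also $e+2m>e+m$; thus $pN$ strictly dominates (in valuation) every remaining term, and the relation $pN=-\sum_{k\ge2}\binom{p}{k}N^{k}$ is impossible since the two sides have different Gauss valuations. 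Hence $\Gamma'$ has no nontrivial torsion, and being normal of finite index it is the desired subgroup $\Gamma'$.

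The only genuinely non-formal inputs are the finite generation of $\Gamma$ and the $p$-adic embedding of $R$; both are classical, so the main obstacle is organizing the valuation bookkeeping in the case $\ell=p$ so that the threshold $j>e/(p-1)$ indeed isolates $pN$ as the term of least valuation.
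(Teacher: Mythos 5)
The paper itself offers no proof of this statement: Lemma \ref{umschreiben} is simply recalled as Selberg's classical lemma and used as a black box, so there is no internal argument to compare yours against. What you have written is a correct account of the standard proof of Selberg's lemma, specialized to $N=2n+2$: finite generation of the lattice (Garland--Raghunathan in the non-uniform rank-one case), passage to the finitely generated ring $R$ of matrix entries, an embedding of $R$ into the valuation ring $\mathfrak{o}$ of a non-archimedean local field of residue characteristic $p$, and torsion-freeness of the congruence kernel of level $\pi^{j}$ with $j>e/(p-1)$. Your valuation bookkeeping in the crucial case $\ell=p$ is right: $w(pN)=e+m$ exactly, the middle binomial terms have valuation at least $e+2m$, and $w(N^{p})\ge pm>e+m$ precisely because $m\ge j>e/(p-1)$; the ultrametric inequality then makes $0=pN+\cdots$ impossible. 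The threshold is genuinely needed: the naive choice $j=1$ fails, e.g.\ $-I\equiv I \pmod{\pi}$ whenever $p=2$, so this is the part of the argument that had to be done carefully, and you did it correctly.

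Two points deserve more care than you give them, though both are reparable and neither makes the proof fail. First, the embedding $R\hookrightarrow\mathfrak{o}$ is the one genuinely nontrivial input, and your parenthetical justification (``completing and, if necessary, dominating $\mathfrak{m}$ by a discrete valuation\dots a standard consequence of the Nullstellensatz'') is not a proof: the $\mathfrak{m}$-adic completion of $R$ need not be a domain, and a valuation ring dominating $R_{\mathfrak{m}}$ (Chevalley) need not be discrete nor have finite residue field without further work --- and finiteness of $\mathfrak{o}/\pi^{j}$ is exactly what makes $\Gamma'$ of finite index. The clean statement to invoke is Cassels' embedding theorem: a finitely generated field of characteristic $0$ embeds into $\Q_p$ for infinitely many $p$, with finitely many prescribed nonzero elements becoming units. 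Note that one cannot shortcut this via arithmeticity, since lattices in $\SO_0(1,2n+1)$ may have transcendental entries. Second, in the case $\ell\neq p$, it is not true that every eigenvalue of $g$ is a nontrivial $\ell$-th root of unity; rather, $X^{\ell}-1$ is separable, so $g$ is diagonalizable and $g\neq I$ forces at least one primitive $\ell$-th root of unity $\zeta$ among the eigenvalues, and the conclusion $\bar{\zeta}=1$ uses that the eigenvalues are integral over $\mathfrak{o}$ and that the characteristic polynomial reduces to $(X-1)^{N}$ modulo the maximal ideal. With those two repairs made precise, your argument is a complete and correct proof of the lemma the paper leaves unproved.
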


\subsection{Lie algebras}\label{liealg}
Denote by $E_{i,j}$ the matrix in $\mathfrak{g}$ whose $(i,j)$'th entry is 1 and the other entries are~0. Let
\begin{equation*}
\begin{gathered}
H_1 := E_{1,2} + E_{2,1},\\
H_j := i (E_{2j-1,2j} - E_{2j,2j-1}), \quad j = 2, \ldots, n+1.
\end{gathered}
\end{equation*}
Then $\mathfrak{a} = \R H_1$ and let  $\mathfrak{b} = i \R H_2 + \ldots + i \R H_{n+1}$ be the standard Cartan subalgebra of $\mathfrak{m}$. Moreover, $\mathfrak{h} = \mathfrak{a} \oplus \mathfrak{b}$ is a Cartan subalgebra of $\mathfrak{g}$. Define $e_i \in \mathfrak{h}_{\C}^*$ with $i = 1, \ldots, n+1$, by
\begin{equation}\label{makaronina}
 e_i(H_j) = \delta_{i,j}, \, 1\le i,j\le n+1.
\end{equation}
The sets of roots of $(\mathfrak{g}_\C, \mathfrak{h}_{\C})$ and   $(\mathfrak{m}_{\C}, \mathfrak{b}_{\C})$ are given by
\begin{equation}\label{nichegouzhenesvyazano}
\begin{gathered}
\Delta(\mathfrak{g}_\C, \mathfrak{h}_{\C}) = \{ \pm e_i \pm e_j, 1 \le i < j \le n+1\},\\
\Delta(\mathfrak{m}_{\C}, \mathfrak{b}_{\C}) = \{  \pm e_i \pm e_j, 2\le i < j \le n+1   \}.
\end{gathered}
\end{equation}
We fix the positive systems of roots by 
\begin{equation}\label{mojbambino}
\begin{gathered}
\Delta^+(\mathfrak{g}_\C, \mathfrak{h}_{\C}) = \{  e_i \pm e_j, 1 \le i < j \le n+1\},\\
\Delta^+(\mathfrak{m}_{\C}, \mathfrak{b}_{\C}) = \{   e_i \pm e_j, 2\le i < j \le n+1   \}.
\end{gathered}
\end{equation}
The half-sum of the positive roots $\Delta^+(\mathfrak{m}_{\C}, \mathfrak{b}_{\C})$ equals 
\begin{equation}\label{halfsummy}
\rho_M = \sum_{j=2}^{n+1} \rho_j e_j,\quad \rho_j = n+1-j.
\end{equation}
Let $M'$ be the normalizer of $A$ in $K$ and let $W(A)=M'/M$ be the restricted Weyl group. It has order~2 and acts on finite-dimensional representations of $M$ \cite[p.~18]{Pf}. Denote by $w_0$ the non-identity element of $W(A)$.

\subsection{Principal series parametrization.}\label{princyyyy} Let $\sigma: M \mapsto \End(V_\sigma)$ be a finite-dimensional irreducible representation of $M$. 
\begin{definition}
We define $\mathcal{H}^\sigma$ to be the space of measurable functions $f:K\mapsto V_\sigma$ such that
\begin{enumerate}
\item $f(mk)=\sigma(m) f(k)$ for all $k \in K$ and $m \in M$;
\item $\int_K || f(k)||^2 dk < \infty$.
\end{enumerate}
\end{definition}
Recall $H: G \to \mathfrak{a}$, $\kappa: G \to K$ are as in Subsection \ref{subsection1} and $e_1 \in \mathfrak{h}_\C^*$ is as in Subsection \ref{liealg}. For $\lambda \in \R$ define the representation $\pi_{\sigma, \lambda}$ of $G$ on $\mathcal{H}^\sigma$ by the following formula:
$$ \pi_{\sigma,\lambda} (g) f(k) := e^{(i \lambda e_1 + \rho) (H(kg))} f(\kappa(kg)),$$
where $f\in \mathcal{H}^\sigma$, $g\in G$.
\subsection{Representations.} Fix $\tau_1, \ldots, \tau_{n+1} \in \N,$ such that $\tau_1 \ge \tau_2 \ge \ldots \ge \tau_{n+1}$. Recall that $n~=~\frac{\dim(\Ob)-1}{2}$.
\begin{definition}\label{mimirep}
For $m \in \N$ denote by $\tau(m)$ the finite-dimensional representation of $G$ with highest weight 
$$(m + \tau_1)e_1 + \ldots + \ldots (m+\tau_{n+1}) e_{n+1}.$$
\end{definition}
This is a ray of representations which will be the focus of the article.
\begin{definition}\label{dertttt} Let $\tau$ be the finite-dimensional irreducible representation of $G$ with highest weight $\tau_1 e_1 + \ldots + \tau_{n+1} e_{n+1}$.
The denote by $\sigma_{\tau,k}$ be the representation of $M$ with  highest weight
$$\Lambda_{\sigma_{\tau,k}}:= (\tau_2 +1)e_2 + \ldots + (\tau_k +1)e_{k+1}+\tau_{k+2}e_{k+2} + \ldots + \tau_{n+1}e_{n+1}.$$
\end{definition}

\subsection{Differential operators}
\label{lightmycandle}
Let $\nu$ be a finite-dimensional unitary representation $\nu$
of $K$ over $(V_\nu, \langle \cdot, \cdot \rangle_\nu)$. Let $\widetilde{E}_\nu := G \times_{\nu} V_\nu$ be the associated 
homogeneous vector bundle over $\HH^{2n+1}$ and $E_\nu:= \Gamma \bs \widetilde{E}_\nu$ be the corresponding locally homogeneous orbibundle over $\Ob$. The smooth sections of $\widetilde{E}_\nu$ can be identified with
\begin{equation*}
\begin{gathered}
C^\infty(G,\nu):= \left\lbrace  f\in C^\infty(G, V_\nu), \quad  f(gk)=\nu(k^{-1}) f(g) \quad \forall g\in G , \right. \\ \left. \forall k \in K\right\rbrace.
\end{gathered}
\end{equation*}
It follows that the smooth sections of $E_\nu$ can be identified with
$$C^\infty(\Gamma \bs G,\nu):= \left\lbrace  f\in C^\infty(G,\nu), \quad  f(\gamma g) = f(g) \quad \forall g\in G, \gamma \in \Gamma \right\rbrace.$$
The spaces $L^2( G, \nu)$ and $L^2(\Gamma \bs G, \nu)$ are defined as usual.
\begin{definition}
\label{yanenavizhu}
Let $\widetilde{A}_\nu$ be the differential operator that acts on $C^\infty(G,\nu)$ by $-R(\Omega)$, where $R$ is the right regular representation of $G$ and $\Omega$ is the Casimir element of $G$. Let $A_\nu$ be its push-forward to $C^\infty(\Gamma \bs G, \nu)$.
\end{definition}
\begin{Proposition}\cite[Proposition 1.1]{Mi}
$\widetilde{A}_\nu$ and $A_\nu$ are essentially self-adjoint and bounded from below.\qed
\end{Proposition}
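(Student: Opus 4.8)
The plan is to reduce the statement to the essential self-adjointness of a Bochner Laplacian on a complete manifold. First I would fix the Cartan decomposition $\mathfrak{g} = \mathfrak{k}\oplus\mathfrak{p}$ and choose bases $\{X_i\}$ of $\mathfrak{p}$ and $\{Y_j\}$ of $\mathfrak{k}$ that are orthonormal for the inner product $-B(\cdot,\theta\cdot)$ attached to the Killing form $B$, so that the Casimir element takes the form $\Omega = \sum_i X_i^2 - \sum_j Y_j^2$. Under the identification of $C^\infty(G,\nu)$ with the smooth sections of $\widetilde{E}_\nu$, the vectors $X_i$ span $\mathfrak{p}\cong T_{eK}\HH^{2n+1}$, while differentiating the equivariance relation $f(gk)=\nu(k^{-1})f(g)$ shows $R(Y)f=-d\nu(Y)f$ for $Y\in\mathfrak{k}$. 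This yields the Kuga--type identity
\begin{equation*}
\widetilde{A}_\nu = -R(\Omega) = \CL - \nu(\Omega_{\mathfrak{k}}),
\end{equation*}
where $\CL = -\sum_i R(X_i)^2$ is the connection Laplacian of the canonical invariant connection on $\widetilde{E}_\nu$ and $\Omega_{\mathfrak{k}} = -\sum_j Y_j^2$ is the Casimir of $\mathfrak{k}$. Since $\nu$ is finite dimensional, it splits into finitely many $K$-irreducibles, on each of which $\nu(\Omega_{\mathfrak{k}})$ acts by a real scalar; hence $\nu(\Omega_{\mathfrak{k}})$ is a bounded self-adjoint endomorphism of $V_\nu$, say of norm at most $C_\nu$.

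Next I would settle essential self-adjointness and the lower bound on $\HH^{2n+1}$. The operator $\CL$ is a nonnegative, formally self-adjoint second-order elliptic operator acting on sections of a Hermitian bundle over the complete Riemannian manifold $\HH^{2n+1}$, so by the theorem of Gaffney---most cleanly in the form of Chernoff's finite-propagation-speed argument---it is essentially self-adjoint on compactly supported smooth sections. Adding the bounded symmetric endomorphism $-\nu(\Omega_{\mathfrak{k}})$ leaves the domain of the closure unchanged by the Kato--Rellich theorem, so $\widetilde{A}_\nu$ is essentially self-adjoint; and since $\CL\ge 0$ we obtain $\widetilde{A}_\nu \ge -C_\nu$, proving boundedness below.

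Finally, for $A_\nu$ I would transport the same argument to the quotient. The identity $\widetilde{A}_\nu = \CL - \nu(\Omega_{\mathfrak{k}})$ is $G$-invariant, hence $\Gamma$-invariant, so it descends to $A_\nu = \CL - \nu(\Omega_{\mathfrak{k}})$ on $C^\infty(\Gamma\bs G,\nu)$, and the quotient $\Ob = \Gamma\bs\HH^{2n+1}$ is again complete. The hard part is that $\Ob$ is noncompact and carries orbifold singularities, so essential self-adjointness is no longer automatic from ellipticity; I would handle this by noting that Chernoff's argument is purely local and uses only geodesic completeness of the metric, and therefore applies verbatim to the complete orbifold $\Ob$ (alternatively, one lifts to $\HH^{2n+1}$ and works $\Gamma$-equivariantly, using that $A_\nu$ is the push-forward of the already-treated $\widetilde{A}_\nu$). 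The same perturbation and positivity estimates then give that $A_\nu$ is essentially self-adjoint and bounded below by $-C_\nu$, as claimed.
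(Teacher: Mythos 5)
Your overall strategy --- the Kuga-type identity $-R(\Omega)=\CL-\nu(\Omega_{\mathfrak{k}})$, essential self-adjointness of a Bochner Laplacian on a complete manifold via Gaffney/Chernoff, and Kato--Rellich to absorb the bounded symmetric endomorphism --- is the standard proof of this statement. Note that the paper itself gives no argument at all: the proposition is quoted from [Mi] with the proof left to that reference, so what you wrote is precisely the content being cited. For $\widetilde{A}_\nu$ on $L^2(G,\nu)$, i.e.\ over $\HH^{2n+1}$, your argument is complete and correct, including the sign bookkeeping that makes $\nu(\Omega_{\mathfrak{k}})$ a bounded symmetric endomorphism and yields the lower bound $-C_\nu$.

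The gap is in the passage to $A_\nu$, and it is twofold. First, the assertion that Chernoff's theorem ``applies verbatim to the complete orbifold $\Ob$'' is a conclusion rather than an argument: Chernoff's theorem is stated for manifolds, and although the finite-propagation-speed argument does extend to complete orbifolds (one runs the energy estimates and elliptic regularity equivariantly in uniformizing charts), that extension is exactly what would have to be proved or cited here. Second, your parenthetical fallback --- lifting to $\HH^{2n+1}$ and ``using that $A_\nu$ is the push-forward of the already-treated $\widetilde{A}_\nu$'' --- does not work as stated: when $\Gamma$ is infinite, $\Gamma$-invariant sections are never in $L^2(G,\nu)$, so self-adjointness of $\widetilde{A}_\nu$ on $L^2(G,\nu)$ transfers no information to $L^2(\Gamma\backslash G,\nu)$; essential self-adjointness does not push forward along such a quotient. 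The clean repair avoids orbifolds entirely: $\Gamma$ acts on $G$ by \emph{left} translations, and this action is free even when $\Gamma$ has torsion, so $\Gamma\backslash G$ is a smooth manifold, complete in the metric induced by $-B(\cdot,\theta\,\cdot)$, which is left-$G$- and right-$K$-invariant. On $C^\infty(\Gamma\backslash G,\nu)$ one has the identity
\begin{equation*}
-R(\Omega)=\Delta_{\Gamma\backslash G}-2\,\nu(\Omega_{\mathfrak{k}}),
\end{equation*}
where $\Delta_{\Gamma\backslash G}=-\sum_a R(Z_a)^2$ (sum over an orthonormal basis of $\mathfrak{g}$) is the genuine elliptic Laplacian of that complete metric acting on $V_\nu$-valued functions. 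Chernoff and Kato--Rellich apply there; averaging over $K$ preserves compactly supported smooth functions and commutes with both $\Delta_{\Gamma\backslash G}$ and $\nu(\Omega_{\mathfrak{k}})$, so essential self-adjointness and the lower bound restrict to the $K$-equivariant subspace $L^2(\Gamma\backslash G,\nu)$, which is exactly the space on which $A_\nu$ is defined.
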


Let $e^{-t A_\nu}$ be the semigroup of $A_\nu$ on $L^2(\Gamma \bs G, \nu)$, let $H_t^\nu(g)$ be its convolution kernel and
\begin{equation}
\label{htdef}
h_t^\nu(g):= \tr H_t^\nu(g), \quad g \in G,
\end{equation}
where $\tr$ denotes the trace in $\End( V_\nu)$.

\subsection{Truncation.} Recall that to define the regularized trace $\Tr_{reg} e^{-t A_\nu}$ \cite{MP,Par} we need to introduce the height function on every cusp. To do so, for each $P\in\mathfrak{P}$ define 
\[
\iota_P\colon \R^+\to A_P
\]
by $\iota_P(t):=a_P(\log(t))$. For $Y>0$, let 
\[
A^{0}_{P}\left[Y\right]:=(\iota_P(Y),\iota(\infty)).
\]
Let $\Gamma' \subset \Gamma$ be as in  Lemma \ref{umschreiben}.
Then there exists a $Y_{0}>0$ and for every $Y\geq Y_{0}$ a compact connected
subset $C(Y)$ of $G$  such that in the sense of a disjoint union one has
\begin{align}\label{Zerlegung des FB}
G=\Gamma' \cdot C(Y)\sqcup\bigsqcup_{P\in\mathfrak{P}}\Gamma' \cdot
N_{P}A^{0}_{P}\left[Y\right]K
\end{align}
and such that 
\begin{align}\label{Eigenschaft des FB}
\gamma\cdot N_{P}A^0_{P}\left[Y\right]K\cap N_{P}A_{P}^{0}\left[Y\right]K\neq
\emptyset\Leftrightarrow\gamma\in \Gamma'_{N}. 
\end{align}
\begin{definition}\label{makeupyourmind}
For $P\in\mathfrak{P}$ let $\chi_{P,Y}$ be the characteristic function of
$N_PA_{P}^0\left[Y\right]K\subset G$.
\end{definition}
\begin{remark}
The truncation of a manifold $\Gamma' \bs \HH^{2n+1}$ induces a trunction of an orbifold $\Gamma \bs \HH^{2n+1}$. 
\end{remark}

\section{Eisenstein series}
\label{Eis}
In this section we recall the definition and main properties of  Eisenstein series \cite{War}. Recall that $\Gamma$ is a cofinite lattice in $SO_0(1,d)$ with $d=2n+1$, $n \in \N$.
\begin{definition}
For $P=M_PA_PN_P\in\mathfrak{P}$ as in Subsection \ref{hyporbsubsec}, let  $\mathcal{E}_{P}$ be the space of measurable functions $\Phi: G \to \C$ such that
\begin{enumerate}
\item $\Phi(g \gamma) = \Phi(\gamma)$ for all $g \in (\Gamma \cap P)N_P A_P$,
\item $\Phi|_K$ is square-integrable.
\end{enumerate}
\end{definition}
\begin{definition}
For $\Phi\in\mathcal{E}_P$ and $\lambda\in\C$, put
\[
\Phi_\lambda(x):=e^{(\lambda +(d-1)/2)H(x)}\Phi(x),
\]
where $H(x)$ is as in (\ref{H-definition}). Then for each $\lambda\in\C$ there is a representation $\pi_{P,\lambda}$ of
$G$
on $\mathcal{E}_{P}$ defined by
\[
(\pi_{P,\lambda}(y)\Phi)_\lambda(x):=\Phi_\lambda(xy).
\] 
The representation  $\pi_{P,\lambda}$ is unitary for $\lambda\in i\R$.
\end{definition}
\begin{definition}\label{sweetsweet}
We define an inner product $\langle \cdot, \cdot \rangle: \mathcal{E}_P \times \mathcal{E}_P \to \C$ by
\begin{equation}\label{kuscheln}
\langle \Phi, \Psi \rangle :=  \int_K \int_{M / \Gamma_M} \Phi(km) \bar{\Psi}(km) \,dk\, dm,
\end{equation} 
where $\Gamma_M = \Gamma \cap M \cdot N/\Gamma \cap N$. 
\end{definition}
\begin{definition}
Denote by $\mathcal{E}^0_P$ the
subspace of $\mathcal{E}_P$ consisting of all right $K$-finite and left
$\mathfrak{Z}_M$-finite functions, where $\mathfrak{Z}_M$ denotes the center of the universal enveloping algebra of $\mathfrak{m}_\C$.
\end{definition}
\begin{definition}\label{enttaeuscht}
For $\Phi\in\mathcal{E}_P^0$, the Eisenstein series $E(P,\Phi,\lambda,x)$  is defined 
by
\[
E(P,\Phi,\lambda,x):=\sum_{\gamma\in\Gamma\cap P\bs \Gamma}\Phi_\lambda(\gamma
x).
\]
\end{definition}
It converges absolutely and uniformly on compact subsets of 
$\{\lambda\in\C\colon \RRe(\lambda)>(d-1)/2\}\times G$, and has a 
meromorphic
extension to $\C$. For $P' \in\mathfrak{P}$, the constant term 
$E_{P^\prime}(P,\Phi,\lambda)$ of $E(P,\Phi,\lambda)$  is
 defined by
\begin{equation}
E_{P^\prime}(P,\Phi,\lambda,x):=\frac{1}{\vol(\Gamma\cap P'\bs
N_{P^\prime})}
\int_{\Gamma\cap P'\bs N_{P^\prime}}E(P,\Phi,\lambda,y 
x)\;dy.
\end{equation}
Note that the function $\Phi$ is left $(\Gamma \cap P)$-invariant, hence
\begin{equation}\label{c-term1}
E_{P^\prime}(P,\Phi,\lambda,x) = \frac{1}{\vol(\Gamma\cap N_{P^\prime}\bs
N_{P^\prime})}
\int_{\Gamma\cap N_{P^\prime}\bs N_{P^\prime}}E(P,\Phi,\lambda,n^\prime
x)\;dn^\prime.
\end{equation}
Moreover, there exist linear maps \cite[(3.9)]{MP}
$$ c_{P'|P}(w:\lambda): \mathcal{E}_P \to \mathcal{E}_{P'},$$
which are meromorphic functions of $\lambda \in \C$, such that 
\begin{equation}\label{c-term2}
E_{P^\prime}(P,\Phi,\lambda,x)=\sum_{w\in W(A_P,A_{P^\prime})}
e^{(w\lambda+(d-1)/2)(H_{P^\prime}(x))}
\left(c_{P^\prime|P}(w\colon\lambda)\Phi\right)(x)
\end{equation}
for $w \in W(A_P, A_{P'})$ as in \cite[(3.9)]{MP}.
\begin{definition}\label{sweetsacrifice} Put
\[
\boldsymbol{\mathcal{E}}:=\bigoplus_{P\in\mathfrak{P}}\mathcal{E}_P,\quad \boldsymbol{\mathcal{E}}^0:=\bigoplus_{P\in\mathfrak{P}}\mathcal{E}^0_P, \quad \pi_\lambda=\bigoplus_{P\in\mathfrak{P}} \pi_{P,\lambda}.
\] We define the inner product $\langle \cdot , \cdot \rangle$ on $\boldsymbol{\mathcal{E}}$ using Definition \ref{sweetsweet}. 
\end{definition}
\begin{definition}\label{fallingforever}
For $\boldsymbol{\Phi} \in \boldsymbol{\mathcal{E}}$, define
\[
{E}(\mathbf{\Phi},\lambda,x):=\sum_{P\in\mathfrak{P}}E(P,\Phi_P,\lambda,x), 
\quad {E_{P'}}(\mathbf{\Phi},\lambda,x):=\sum_{P\in\mathfrak{P}}E_{P'}(P,\Phi_P,\lambda,x).
\]
\end{definition}
\begin{definition}
Let $w_0$ be the nontrivial element of $W(A)$. The operators 
$c_{P^\prime|P}(k_P'w_0k_P^{-1}:\lambda)$ can be combined into a linear operator
\[
\mathbf{C}(\lambda)\colon \boldsymbol{\mathcal{E}}^0\to \boldsymbol{\mathcal{E}}^0,
\]
which is a meromorphic function of $\lambda \in \C$. 
\end{definition}

\begin{definition}
We define the truncated Eisenstein series by:
$$E^Y(\Phi,\lambda,x):=E(\Phi,\lambda,x)
-\sum_{P\in\mathfrak{P}} \sum_{\gamma\in\Gamma\cap
P\backslash\Gamma}\chi_{P,Y}
(\gamma g)E_{P}(\Phi,\lambda,\gamma g),$$
where $E(\Phi, \lambda, x)$ and $E_{P}(\Phi,\lambda,\gamma g)$ are from Definition \ref{fallingforever} and  $\chi_{P,Y}(\gamma g)$ is from Definition \ref{makeupyourmind}.
\end{definition}

\begin{Lemma}[Maass-Selberg relations]\label{maassive_donut}
Let  $\Phi,\Psi\in\boldsymbol{\mathcal{E}}^0$ and $\lambda\in\mathfrak{a}^*$. The lemma below follows from the proof of \cite[Lemma 4.3]{Pf} with minor changes. Note that the inner product $\langle \cdot , \cdot \rangle$ in this lemma should be understood as in Definition \ref{sweetsacrifice}.
\begin{equation*}
\begin{gathered}
\int_{\Gamma\bs G}E^Y(\Phi,i\lambda,x)\overline{E^Y}(\Psi,i\lambda,x)\;dx
=-\left<\mathbf{C}(-i\lambda)\frac{d}{dz}\mathbf{C}(i\lambda)\Phi,\Psi\right>+\\
2\left<\Phi,\Psi\right>\log{Y}+\frac{Y^{2i\lambda}}{2i\lambda}
\left<\Phi,\mathbf{C}(i\lambda)\Psi\right>-\frac{Y^{-2i\lambda}}{2i\lambda}
\left<\mathbf{C}(i\lambda)\Phi,\Psi\right>.
\end{gathered}
\end{equation*}
\end{Lemma}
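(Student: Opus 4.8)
The plan is to reproduce the classical Maass--Selberg computation, as in \cite[Lemma 4.3]{Pf}, while carrying the finite stabilisers $\Gamma_M$ through every step. Write $\Lambda^Y$ for the truncation operator, so $E^Y(\Phi,i\lambda)=\Lambda^Y E(\Phi,i\lambda)$; for $Y\ge Y_0$ this function is smooth and rapidly decreasing, hence lies in $L^2(\Gamma\bs G)$. The starting observation is that each Eisenstein series $E(\Phi,i\lambda)$ is an eigenfunction of $-R(\Omega)$ with a scalar eigenvalue $q(\lambda)$ that is a real quadratic polynomial in $\lambda$ (depending only on the $M$-type of $\Phi$), but that $\Lambda^Y$ does \emph{not} commute with $\Omega$: the commutator $[\,R(\Omega),\Lambda^Y\,]$ acts by differentiating the cut-off functions $\chi_{P,Y}$ of Definition \ref{makeupyourmind}, and is therefore supported on the cross-sections $\{H_P=\log Y\}$ of the cusps, where it only feels the constant term $E_P$. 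I would introduce a second parameter $\mu\in\mathfrak{a}^*$, compute the pairing $\langle E^Y(\Phi,i\lambda),E^Y(\Psi,i\mu)\rangle$ for generic $\mu$, and let $\mu\to\lambda$ at the end.

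The key identity, obtained from self-adjointness of $R(\Omega)$ and $\Omega E^Y(\Phi,i\lambda)=q(\lambda)E^Y(\Phi,i\lambda)+[R(\Omega),\Lambda^Y]E(\Phi,i\lambda)$, reads
\[
\bigl(q(\mu)-q(\lambda)\bigr)\langle E^Y(\Phi,i\lambda),E^Y(\Psi,i\mu)\rangle=\langle [R(\Omega),\Lambda^Y]E(\Phi,i\lambda),E^Y(\Psi,i\mu)\rangle-\langle E^Y(\Phi,i\lambda),[R(\Omega),\Lambda^Y]E(\Psi,i\mu)\rangle.
\]
Because each commutator term is concentrated on the horocyclic cross-sections at height $Y$, the right-hand side collapses to a sum over $P\in\mathfrak{P}$ of integrals over the cross-section, into which I substitute the constant-term expansion \eqref{c-term2}. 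This is exactly where the non-neat modifications enter: the cross-section integral is taken over $\Gamma\cap N_P\bs N_P$ together with the $M/\Gamma_M$ average prescribed by Definition \ref{sweetsweet} and the volume normalisation of \eqref{c-term1}; the finiteness of $\Gamma_M(P)$ (Remark \ref{marinad}) together with the disjointness property \eqref{Eigenschaft des FB} guarantees that for $Y\ge Y_0$ the constant term of $E^Y$ above height $Y$ along each $P$ is precisely $-\chi_{P,Y}E_P$, with no interference between distinct cusps. Managing this bookkeeping so that the pairing $\langle\cdot,\cdot\rangle$ of Definition \ref{sweetsacrifice} (rather than the bare $L^2$-pairing) emerges with the correct factors is the substitute for the ``minor changes'' over \cite{Pf}.

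Substituting $e^{(w(i\lambda)+\rho)\log Y}$ for $w\in W(A)=\{1,w_0\}$ reduces everything to elementary exponentials in $Y$, indexed by the four Weyl pairings $(w,w')\in\{1,w_0\}^2$. The two diagonal pairings $(1,1)$ and $(w_0,w_0)$ carry the denominator $\lambda-\mu$; using the functional equation $\mathbf{C}(-i\lambda)\mathbf{C}(i\lambda)=\Id$ (whence $\langle\mathbf{C}(i\lambda)\Phi,\mathbf{C}(i\lambda)\Psi\rangle=\langle\Phi,\Psi\rangle$) their limits as $\mu\to\lambda$ combine into $2\langle\Phi,\Psi\rangle\log Y$, while differentiating the factor $\mathbf{C}(i\mu)$ appearing in the $(w_0,w_0)$ term produces $-\langle\mathbf{C}(-i\lambda)\frac{d}{dz}\mathbf{C}(i\lambda)\Phi,\Psi\rangle$. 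The off-diagonal pairings $(1,w_0)$ and $(w_0,1)$ carry the denominator $\lambda+\mu$ and, without any limit, contribute $\frac{Y^{2i\lambda}}{2i\lambda}\langle\Phi,\mathbf{C}(i\lambda)\Psi\rangle-\frac{Y^{-2i\lambda}}{2i\lambda}\langle\mathbf{C}(i\lambda)\Phi,\Psi\rangle$. Summing these four contributions yields exactly the asserted identity.

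I expect the main obstacle to be precisely the constant-term bookkeeping in the non-neat case: one must check that $\Lambda^Y$ removes the full constant term of $E^Y$ above height $Y$ along \emph{every} $P\in\mathfrak{P}$ simultaneously (so that the commutator genuinely sees only \eqref{c-term2}), and that the finite group $\Gamma_M$ is incorporated with the right volume weights so that Definition \ref{sweetsweet} reappears on the nose. The remaining ingredients---self-adjointness and near-idempotency of $\Lambda^Y$, which rely on \eqref{Eigenschaft des FB}, and the meromorphy, functional equation, and regularity of $\mathbf{C}$ on the unitary axis $i\mathfrak{a}^*$ that legitimise the passage $\mu\to\lambda$---are structural facts that may be cited.
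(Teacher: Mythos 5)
Your proposal is correct and takes essentially the same route as the paper: the paper gives no independent argument but simply invokes the proof of \cite[Lemma 4.3]{Pf} ``with minor changes,'' and that proof is precisely the truncation/Green's-formula computation you outline (two spectral parameters, the commutator $[R(\Omega),\Lambda^Y]$ concentrated at the cusp cross-sections, the four Weyl pairings, and the limit $\mu\to\lambda$ using the functional equation of $\mathbf{C}$). The ``minor changes'' are exactly the point you single out, namely that the $\Gamma_M$-averaged inner product of Definition \ref{sweetsacrifice} must replace the bare $L^2$-pairing in the non-neat case.
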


\section{Trace formula}\label{sectionaboutSTF}
In this section we recall the invariant Selberg trace formula, define the regularized trace $\textnormal{Tr}_{reg}\left(e^{-tA_\nu}\right)$ and express it as the spectral side of the Selberg trace formula. The main theorem of this section will be Theorem~\ref{whattheyknow}.

First we recall some facts from \cite[Sections 1-3]{War}. Let $\pi_\Gamma$ be the right-regular representation of $G$ on $L^2(\Gamma \bs G)$. Then there exists an orthogonal decomposition
\begin{equation}
\label{decup}
L^2(\Gamma \bs G) = L^2_d(\Gamma \bs G) \oplus L^2_c(\Gamma \bs G)
\end{equation}
into closed $\pi_\Gamma$-invariant subspaces. The restriction $\pi_\Gamma$ to $L^2_c(\Gamma \bs G)$ is isomorphic to the direct integral over all unitary principle series representations of $\Gamma$. The restriction of $\pi_\Gamma$ to $L_d^2(\Gamma \bs G)$ decomposes into the orthogonal direct sum of irreducible unitary representations of $\Gamma$.
\begin{definition}
Let $\al \in C^\infty(G)$ be a $K$-finite Schwarz function. Denote by $\pi_\Gamma(\al)$ the following operator on $L^2(\Gamma \bs G)$:
\begin{equation}
\pi_\Gamma(\al) f(x) := \int_G \al(g) f(xg) dg.
\end{equation}
\end{definition}
Note that relative to (\ref{decup}) one has a splitting:
\begin{equation*}
\pi_\Gamma(\al)=\pi_{\Gamma,d}(\al) \oplus \pi_{\Gamma,c}(\al).
\end{equation*}
The operator $\pi_{\Gamma,d}$ is of trace class by an extension of \cite[Theorem~I.1]{Donnelly}. We recall our main tool, namely a special case of the invariant trace formula stated in \cite[Theorem 6.4]{hoff}:
\begin{Th} 
 For a $K$-finite Schwarz function~$\al\in C^\infty(G)$ we have
 \begin{equation}
 \begin{gathered}
 \Tr\left(\pi_{\Gamma,d}(\al)\right)=I(\al)+H(\al
  )+T(\al)+\mathcal{I}(\al)+R(\al)+\\ \mathcal{S}(\al)+E(\al)+E^{cusp}(\al) + \mathcal{J}^{cusp}(\al).
 \end{gathered}
 \end{equation}
 Above $I(\al)$, $H(\al)$, $T(\al)$, $\mathcal{I}(\al)$, $R(\al)$, and $\mathcal{S}(\al)$ and  are as in \cite[(6.1)]{Pf},  \cite[(6.3)]{Pf}, \cite[(6.9)]{Pf},  \cite[(6.12)]{Pf}, \cite[(6.17)]{Pf}, and \cite[(6.13)]{Pf}, respectively. The distributions $E(\al)$, $E^{cusp}(\al)$ and $\mathcal{S}^{cusp}(\al)$ shall be defined later in the section. 
 \end{Th}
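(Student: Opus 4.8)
The statement is the specialization of Arthur's invariant trace formula, in the form given by Hoffmann \cite[Theorem 6.4]{hoff}, to the rank-one group $G=\SO_0(1,2n+1)$ and the cofinite lattice $\Gamma$. The plan is therefore not to reprove the trace formula from scratch but to verify that Hoffmann's hypotheses are met for our test functions and to identify each geometric and spectral distribution on his right-hand side with the explicit terms in the statement. First I would check that a $K$-finite Schwarz function $\al\in C^\infty(G)$ lies in the class of test functions to which \cite[Theorem 6.4]{hoff} applies; this is where the trace-class property of $\pi_{\Gamma,d}(\al)$, guaranteed by the extension of \cite[Theorem~I.1]{Donnelly} recalled above, enters, ensuring that the left-hand side $\Tr(\pi_{\Gamma,d}(\al))$ is well defined and that the spectral side converges.

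The second step is to enumerate the $\Gamma$-conjugacy classes contributing to the geometric side and to group Hoffmann's distributions accordingly, using the rank-one structure of $G$. Here the only proper parabolic subgroups are those associate to $P_0=MAN$, the restricted Weyl group $W(A)$ has order two, and the Levi component is $L=MA$; this collapses the sum over Levi subgroups and considerably simplifies the weighted orbital integrals. The identity class contributes $I(\al)$ and the hyperbolic (loxodromic) classes contribute $H(\al)$; the unipotent classes and their associated weighted orbital integrals, together with the contributions of the continuous spectrum and of the scattering operator $\mathbf{C}(\lambda)$ controlled by the Maass--Selberg relations of Lemma~\ref{maassive_donut}, assemble into $T(\al)$, $\mathcal{I}(\al)$, $R(\al)$ and $\mathcal{S}(\al)$. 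For neat $\Gamma$ these six identifications are precisely the ones carried out in \cite[Section~6]{Pf}, so for them it suffices to observe that dropping neatness does not change the \emph{form} of these terms, only the range of summation.

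The genuinely new point, and the main obstacle, is the appearance of elements of finite order. When $\Gamma$ is not neat, its elliptic elements split into two families: those not lying in any cusp stabilizer, and the cuspidal elliptic elements lying in some $\Gamma\cap P$. The former contribute an honest invariant elliptic term $E(\al)$, a finite sum of orbital integrals over the $\Gamma$-conjugacy classes of elliptic elements, of the type already analyzed in \cite{Fe2}. The latter, by Remark~\ref{marinad}, project onto the finite sets $\Gamma_M(P)\subset M$; isolating their contribution inside Hoffmann's weighted orbital integrals over $\Gamma\cap P$ is delicate, because these elements sit in the \emph{non}-unipotent part of the cusp stabilizer and their orbital integrals are not absolutely convergent without the truncation encoded in the characteristic functions $\chi_{P,Y}$ of Definition~\ref{makeupyourmind}. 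The plan is to split the integral over $\Gamma\cap P\bs G$ according to the projection to $\Gamma_M(P)$, extract the invariant piece $E^{cusp}(\al)$ and the remaining weighted piece $\mathcal{J}^{cusp}(\al)$, and then to give these two distributions their precise definitions (this is what the statement defers to ``later in the section'').

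I expect the hardest part to be showing that the weighted, non-invariant correction $\mathcal{J}^{cusp}(\al)$ is a well-defined tempered distribution and that, after regularization, the nine terms reproduce Hoffmann's right-hand side with no leftover contributions. Concretely, one must verify that the $Y$-dependence introduced by the truncation cancels between the Eisenstein/scattering contributions $R(\al)$, $\mathcal{S}(\al)$ and the cuspidal terms $E^{cusp}(\al)$, $\mathcal{J}^{cusp}(\al)$, mirroring the cancellation of the neat case treated in \cite{Pf,MP} but now carrying the extra bookkeeping forced by the nontrivial finite groups $\Gamma_M(P)\subset M$. Once this cancellation and the tempered-distribution property are established, the asserted identity follows by collecting terms.
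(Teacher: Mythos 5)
Your proposal takes essentially the same route as the paper: the paper gives no independent proof of this statement, but invokes it directly as a special case of Hoffmann's invariant trace formula \cite[Theorem 6.4]{hoff}, identifying the standard distributions with those of \cite{Pf} and deferring the definitions of $E(\al)$, $E^{cusp}(\al)$ and $\mathcal{J}^{cusp}(\al)$ to later in the section. Your additional verification steps (trace-class property via \cite{Donnelly}, rank-one simplifications, isolating the cuspidal-elliptic contributions into $E^{cusp}$ and $\mathcal{J}^{cusp}$) are consistent with how the paper then organizes these terms, so the proposal is correct.
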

\begin{definition}
Denote by $\{ \gamma  \}$ the conjugacy class of $\gamma \in \Gamma$,  then 
$$ E(\al) = \sum_{ \{\gamma\} \textnormal{ elliptic}} \vol(\Gamma_\gamma \bs G_\gamma) \int_{G_\gamma \bs G} \al(x \gamma x^{-1}) d x,$$
where $\Gamma_\gamma$ and $G_\gamma$ denote the centralizers of $\gamma$ in $\Gamma$ and $G$, respectively.
\end{definition}
\begin{definition}\label{tee}
Let $\gamma \in \Gamma_M(P)$, where $\Gamma_M(P)$ is from Definition \ref{15:21}, and let $\al \in C^\infty(G)$ be a $K$-finite Schwarz function. We define the weighted orbital integral $ J_L (\gamma, \al)$ by
$$ J_L (\gamma, \al) := |D_G(\gamma)|^{1/2} \int_{G / G_\gamma} \al(x a x^{-1}) v(x) dx,$$
where $D_G(\gamma)$ and $v(x)$ are as in \cite[p. 55]{HO}.
\end{definition}
\begin{Proposition}\label{orbringherbacktome} \cite[p.~58]{HO} The weighted integral in Definition~\ref{tee} is 
not an invariant distribution, but the distribution $I_L(\gamma, \al)$ below is invariant:
\begin{equation}
\label{aqwrer}
\begin{gathered}
I_L(\gamma, \al) := J_L(\gamma, \al) - \\ \frac{1}{2 \pi i} \sum_{\sigma \in \widehat{M}} \int_{D_{\epsilon}} \Theta_{\breve{\sigma}_{-\lambda}}(\gamma) \cdot \Tr\left(J_{\bar{P}_{0}|P_{0}}(\sigma,z)^{-1}\frac{d}{
dz}J_{\bar{P}_{0}|P_{0}}(\sigma,z)\pi_{\sigma,z}(\alpha
)\right)dz,
\end{gathered}
\end{equation}
where $J_{\bar{P}_{0}|P_{0}}$ and $\breve{\sigma}_\lambda$ are defined as in \cite[(6.6)]{MP}; $D_{\epsilon}$ is the path which is the union of
$\left(-\infty,-\epsilon\right]$, $H_{\epsilon}$ and
$\left[\epsilon,\infty\right)$, where $H_{\epsilon}$ is the half-circle from $-\epsilon$ to $\epsilon$ in the lower half-plane oriented counter-clockwise; $\pi_{\sigma, z}$ is defined in Subsection \ref{princyyyy}; $\widehat{M}$ is the set of equivalence classes of irreducible unitary representations of $M$; and 
$$\Theta_{\breve{\sigma}_{-\lambda}}(m_\gamma a_\gamma) = e^{- i \lambda t} \cdot \Theta_{\breve{\sigma}}(\sigma), \quad \Theta_{\breve{\sigma}}(m_\gamma) = \tr \; \breve{\sigma}(m_\gamma),$$
for $\gamma$ conjugated to $m_\gamma a_\gamma$, where $a_\gamma = \bsm e^t & 0 \\ 0 & e^{-t} \esm$ and $m_\gamma \in M$. 
\end{Proposition}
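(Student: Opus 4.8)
\emph{Proof proposal.} The plan is to establish invariance by exhibiting a spectral expansion of $I_L(\gamma,\al)$ in which only ordinary, manifestly invariant, character distributions $\al\mapsto\Tr\pi_{\sigma,z}(\al)$ appear; equivalently, to verify directly that the non-invariance of the weighted orbital integral $J_L(\gamma,\al)$ is cancelled by the subtracted weighted-character term. Recall that a distribution $D$ on the space of $K$-finite Schwartz functions is invariant precisely when $D(\al^y)=D(\al)$ for every $y\in G$, where $\al^y(x):=\al(yxy^{-1})$.

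First I would analyze the failure of invariance of $J_L(\gamma,\cdot)$. Since $G=\SO_0(1,2n+1)$ has real rank one, the split component $A$ of the Levi $L=MA$ is one-dimensional, so the $(G,L)$-family from which the weight $v(x)$ in Definition \ref{tee} is built involves exactly the two opposite parabolics $P_0=MAN$ and $\bar P_0=MA\bar N$. Performing the substitution $x\mapsto y^{-1}x$ in the integral defining $J_L(\gamma,\al^y)$ reduces the computation to the variation $v(y^{-1}x)-v(x)$ of the weight, which by the cocycle property of $(G,L)$-families splits as a product of a lower weight and a boundary term. In rank one this boundary term is governed by the single height difference $H_{P_0}(x)-H_{\bar P_0}(x)$, so that $J_L(\gamma,\al^y)-J_L(\gamma,\al)$ becomes an ordinary (invariant) orbital integral of $\al$ against this explicit factor.

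Next I would compute the corresponding non-invariance of the subtracted term. Using $\pi_{\sigma,z}(\al^y)=\pi_{\sigma,z}(y)^{-1}\pi_{\sigma,z}(\al)\pi_{\sigma,z}(y)$ together with the functional equation and the product formula for the intertwining operators $J_{\bar P_0|P_0}(\sigma,z)$ from \cite[(6.6)]{MP}, the logarithmic derivative $J_{\bar P_0|P_0}(\sigma,z)^{-1}\frac{d}{dz}J_{\bar P_0|P_0}(\sigma,z)$ is seen to be the spectral avatar of the same $(G,L)$-family weight under the Plancherel/Paley--Wiener correspondence. Hence the variation of the weighted character $\Tr\left(J_{\bar P_0|P_0}(\sigma,z)^{-1}\frac{d}{dz}J_{\bar P_0|P_0}(\sigma,z)\pi_{\sigma,z}(\al)\right)$ under $\al\mapsto\al^y$, after summation over $\sigma\in\widehat M$ and integration along $D_\epsilon$ against $\Theta_{\breve\sigma_{-\lambda}}(\gamma)$, matches the geometric variation of the previous step. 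Subtracting leaves a pure contour integral of ordinary characters $\Tr\pi_{\sigma,z}(\al)$, each of which is invariant, so $I_L(\gamma,\cdot)$ is invariant; this is the content of \cite[p.~58]{HO}, which I would invoke to supply the precise normalizations.

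The main obstacle will be the exact matching of the two variations at the level of contours and residues. The path $D_\epsilon$, detouring into the lower half-plane around the origin, is chosen so that deforming it toward the real axis produces precisely the residue needed to absorb the jump coming from the failure of smoothness of the geometric weight $v(x)$ across the wall of the $(G,L)$-family. Tracking this residue, the factor $1/(2\pi i)$, and the convergence of the sum over the infinitely many $\sigma\in\widehat M$ (which rests on the rapid decay of $\pi_{\sigma,z}(\al)$ for $\al$ Schwartz) is the delicate part; conceptually, however, the geometric and spectral weights are two realizations of one $(G,L)$-family, so their non-invariances necessarily agree.
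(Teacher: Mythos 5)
The paper itself contains no proof of this proposition: it is imported wholesale from Hoffmann \cite[p.~58]{HO}, so there is no internal argument to compare against. Your sketch reconstructs the standard Arthur-type mechanism that does underlie Hoffmann's construction, and the skeleton is right: the non-invariance of $J_L(\gamma,\cdot)$, computed from the variance of the geometric $(G,L)$-family weight $v(xy)-v(x)$, must cancel against the non-invariance of the weighted character $\Tr\bigl(J_{\bar{P}_{0}|P_{0}}(\sigma,z)^{-1}\tfrac{d}{dz}J_{\bar{P}_{0}|P_{0}}(\sigma,z)\pi_{\sigma,z}(\al)\bigr)$, with the matching effected by Fourier inversion on $L=MA$ (Peter--Weyl on the compact group $M$ in the sum over $\widehat{M}$, Fourier inversion on $A\cong\R$ in the $\lambda$-integral against $\Theta_{\breve{\sigma}_{-\lambda}}(\gamma)$); in real rank one both variances reduce to data attached to the single pair $P_0$, $\bar{P}_0$. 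Two points deserve correction or emphasis. First, Arthur's variance formula expresses $J_L(\gamma,\al^y)-J_L(\gamma,\al)$ as a sum of \emph{ordinary orbital integrals on the Levi of descents} $\al_{Q,y}$ ($Q\in\{P_0,\bar{P}_0\}$), not as an orbital integral of $\al$ itself against an explicit factor; the spectral variance is likewise a sum of ordinary characters of these same descents, and invariance of $I_L(\gamma,\cdot)$ follows because the two variances are equal term by term --- not because ``subtracting leaves a contour integral of invariant characters,'' which slightly misstates the logic. Second, what you flag as the delicate part (the contour $D_\epsilon$ around the possible pole of the intertwining operators at $z=0$, the residue bookkeeping, and the convergence of the sum over $\widehat{M}$) is exactly where the real content of Hoffmann's rank-one analysis lies, and your plan defers all of it to \cite{HO}. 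So as a standalone proof the proposal is incomplete, but as an account of why the proposition holds it is faithful to the cited source --- which is, in effect, all the paper itself does by citing it.
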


\begin{definition}
\label{ibringthemalltolight}
Let 
\begin{equation*}
\begin{gathered}
E^{cusp}(\al) := \sum_{ \gamma \in \Gamma_M(P)} C(\gamma)  \cdot I_L(\gamma, \al),\\
\mathcal{J}^{cusp}(\al) := \sum_{ \gamma \in \Gamma_M(P)} C(\gamma) \cdot (-I_L(\gamma, \al) + J_L(\gamma, \al)),
\end{gathered}
\end{equation*}
where $C(\gamma)$ is a constant from \cite{hoff}. 
\end{definition}
\begin{remark}
The information we need about $C(\gamma)$ is that it depends only on $\gamma$. 
\end{remark}
\begin{remark}
\label{apocalyptiss}
 Recall that $\Gamma_M(P) \subset M$, hence $\Theta_{\breve{\sigma}_{-\lambda}}(\gamma)$ does not depend on $\lambda$. Definition \ref{ibringthemalltolight} and (\ref{aqwrer}) imply
$$\mathcal{J}^{cusp}(\al) = \sum_{ \gamma \in \Gamma_M(P)} C'(\gamma) \cdot (-I_L(1, \al) + J_L(1, \al))$$
for some new constant $C'(\gamma)$.
\end{remark}

\subsection{Trace regularization}
Let $L^2(\Gamma \bs G, \nu)$ be as in Subsection \ref{lightmycandle}. Note that $L^2(\Gamma \bs G, \nu)$ decomposes with respect to (\ref{decup}) in the following way:
\begin{align*}
L^2(\Gamma\backslash
G,\nu)=L^2_d(\Gamma\backslash
G,\nu)\oplus L^2_c(\Gamma\backslash
G,\nu). 
\end{align*}
Denote by $A_\nu^d$ the restriction of $A_\nu$ from Definition \ref{yanenavizhu} to $L^2_d(\Gamma\backslash
G,\nu)$; its spectrum is discrete, and by \cite{MP} the operator $e^{-t A^d_\nu}$ is of trace class.
The main result of this subsection is the following theorem:
\begin{Th}\label{whattheyknow}
Let the regularized trace of $e^{-tA_\nu}$ be defined as:
\begin{equation}\label{regtrace2}
\begin{gathered}
\textnormal{Tr}_{reg}\left(e^{-tA_\nu}\right):=\Tr\left(e^{-tA^d_\nu}\right)+
\sum_{\substack{\sigma\in\hat{M};\sigma=w_0\sigma\\
\left[\nu:\sigma\right]\neq
0}}e^{tc(\sigma)}\frac{\Tr(\widetilde{\boldsymbol{C}}(\sigma,\nu,0))}{4}\\
-\frac{1}{4\pi}\sum_{\substack{\sigma\in\hat{M}\\
\left[\nu:\sigma\right]\neq
0}}\int_{\R}e^{-t\left(\lambda^2-c(\sigma)\right)}
\Tr\left(\widetilde{\boldsymbol{C}}(\sigma,\nu,-i\lambda)
\frac{d}{dz}\widetilde{\boldsymbol{C}}(\sigma,\nu,i\lambda)\right)\,d\lambda.
\end{gathered}
\end{equation}
Then the right hand side of (\ref{regtrace2}) equals the spectral side
of the Selberg trace formula applied to $\exp(-tA_\nu)$, and hence
$$\Tr_{reg}(e^{-tA_{\nu}})=I(h^{\nu}_{t})+H(h^{\nu}_{t})+T(
h^{\nu}_{t})+\mathcal{I}(h^{\nu}_{t})+J(h^{\nu}_{t})+E^{cusp}(h^{\nu}_{t}) + \mathcal{J}^{cusp}(h^{\nu}_{t}),$$
where $h^{\nu}_{t}$ is from (\ref{htdef}).
\end{Th}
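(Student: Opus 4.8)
The plan is to realize the heat semigroup $e^{-tA_\nu}$ as a convolution operator, feed it into the invariant trace formula of \cite[Theorem~6.4]{hoff}, and show that the two correction terms in (\ref{regtrace2}) are exactly the contribution of the continuous spectrum. Recall from Definition~\ref{yanenavizhu} and (\ref{htdef}) that $e^{-tA_\nu}$ is right convolution on $L^2(\Gamma\bs G,\nu)$ with the $\End(V_\nu)$-valued heat kernel $H^\nu_t$, whose fibre trace $h^\nu_t=\tr H^\nu_t$ lies in the Harish-Chandra Schwartz space and is $K$-finite, so Hoffmann's formula applies to $\alpha=h^\nu_t$. A preliminary step I would record, following \cite{Mi,MP}, is that on a principal series $\pi_{\sigma,\lambda}$ the operator $\pi_{\sigma,\lambda}(h^\nu_t)$ equals $e^{-t(\lambda^2-c(\sigma))}$ times the orthogonal projection onto the $\nu$-isotypic subspace, hence has trace $[\nu:\sigma]\,e^{-t(\lambda^2-c(\sigma))}$, where $c(\sigma)$ is fixed by the Casimir $\Omega$; this Frobenius-reciprocity and Casimir-eigenvalue computation is what turns the abstract spectral side into the explicit $t$-dependence of (\ref{regtrace2}).

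I would then split $e^{-tA_\nu}$ along the decomposition $L^2(\Gamma\bs G,\nu)=L^2_d(\Gamma\bs G,\nu)\oplus L^2_c(\Gamma\bs G,\nu)$ induced by (\ref{decup}). The discrete part is the easy half: its restriction is $e^{-tA^d_\nu}$, which is trace class by \cite{MP} (extending \cite[Theorem~I.1]{Donnelly}), so its trace is the first summand of (\ref{regtrace2}). The substance is the continuous part, which is not trace class. Here I would follow the regularization of \cite{MP,Park}: truncate at height $Y$ with the cut-off functions $\chi_{P,Y}$ of Definition~\ref{makeupyourmind}, compute the trace of the truncated operator through the Eisenstein-series decomposition of Section~\ref{Eis}, and apply the Maass-Selberg relations (Lemma~\ref{maassive_donut}). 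The summand $2\langle\Phi,\Psi\rangle\log Y$ carries the entire $Y$-divergence; after subtracting it the finite part splits into the scattering integral of (\ref{regtrace2}) and a boundary contribution at $\lambda=0$. Writing $\widetilde{\boldsymbol{C}}(\sigma,\nu,\lambda)$ for the restriction of the scattering operator $\mathbf{C}(\lambda)$ to the $(\nu,\sigma)$-isotypic subspace and using the functional equation $\mathbf{C}(i\lambda)\mathbf{C}(-i\lambda)=\Id$, the boundary term collapses to $\tfrac14\sum_{\sigma=w_0\sigma,\,[\nu:\sigma]\neq0}e^{tc(\sigma)}\Tr(\widetilde{\boldsymbol{C}}(\sigma,\nu,0))$, only the $w_0$-fixed $\sigma$ surviving. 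This identifies the right-hand side of (\ref{regtrace2}) with the spectral side of the trace formula at $h^\nu_t$.

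With this identification the final assertion is immediate: the invariant trace formula of \cite[Theorem~6.4]{hoff} equates the spectral side at $h^\nu_t$ with its geometric side, which, once the terms are collected, is the sum $I(h^\nu_t)+H(h^\nu_t)+T(h^\nu_t)+\mathcal{I}(h^\nu_t)+J(h^\nu_t)+E^{cusp}(h^\nu_t)+\mathcal{J}^{cusp}(h^\nu_t)$ of the statement. I would emphasize that the cuspidal distributions $E^{cusp}$ and $\mathcal{J}^{cusp}$ of Definition~\ref{ibringthemalltolight}, built from the invariant weighted orbital integrals $I_L(\gamma,\cdot)$ and $J_L(\gamma,\cdot)$ of Proposition~\ref{orbringherbacktome}, are precisely the terms that appear because $\Gamma$ is allowed to be non-neat, i.e.\ to have non-trivial $\Gamma_M(P)$.

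I expect the continuous-spectrum regularization to be the main obstacle in this non-neat setting. Unlike in \cite{MP,Park}, the cusp cross-sections are orbifolds rather than tori, so the Eisenstein series of Section~\ref{Eis} are formed over $\Gamma\cap P$ with non-trivial $\Gamma_M(P)$, and one must check that $\mathbf{C}(\lambda)$ still has the meromorphy and functional equation needed both to extract the finite part via Lemma~\ref{maassive_donut} and to place the residue at $\lambda=0$. The remaining technical point, to be settled before the term-by-term matching, is the admissibility of $h^\nu_t$ as a test function for Hoffmann's formula, together with the convergence of the scattering integral in (\ref{regtrace2}) for every $t>0$.
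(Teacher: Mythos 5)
Your proposal follows essentially the same route as the paper: split $e^{-tA_\nu}$ along $L^2_d\oplus L^2_c$, express the continuous kernel through Eisenstein series (Warner), truncate at height $Y$ and apply the Maass--Selberg relations of Lemma \ref{maassive_donut} to isolate the $\log Y$ divergence, identify the regularized trace with the constant term, and then invoke Hoffmann's invariant trace formula to obtain the geometric side. The points you flag for verification (admissibility of $h^\nu_t$, properties of $\mathbf{C}(\lambda)$ in the non-neat setting) are exactly what the paper disposes of by citation to \cite{War}, \cite{Pf}, and \cite{hoff}, so there is no substantive divergence.
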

For a proof, we need the following lemma:
\begin{Proposition}\cite[Theorem 4.7]{War}
The operator
$\pi_{\Gamma,c}( h_t^\nu)$ is an integral operator with kernel $h_c^\nu(t;x,y)$
given by
\begin{equation}\label{contkernel}
h^\nu_c(t;x,y)=\frac{1}{4\pi}\sum_{m,n\in I}\int_\R 
\langle\boldsymbol{\pi}_\lambda( h_t^\nu)e_m,e_n\rangle E(e_n,i\lambda,x)
\overline{E(e_m,i\lambda,y)}\;d\lambda,
\end{equation}
where $\{e_n\colon n\in I\}$ is an orthonormal basis of $\boldsymbol{\mathcal{E}}$ as in \cite[p. 40]{War}. 
Furthermore, the kernel
$
h_d^\nu(t;x,y)=h^\nu(t;x,y)-h_c^\nu(t;x,y)
$
is integrable along the diagonal and
\[
e^{-t A_d^\nu} = \Tr(\pi_{\Gamma,d}(\alpha))=\int_{\Gamma\bs G} h^d_\nu(t;x,x)\;dx.
\]
\end{Proposition}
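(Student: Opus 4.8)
The plan is to deduce the Proposition from the Langlands--Warner spectral decomposition of $L^2(\Gamma\bs G)$ together with the observation that convolution by $h_t^\nu$ is diagonalized by the Eisenstein transform. First I would check that $h_t^\nu\in C^\infty(G)$ is a $K$-finite Schwarz function: the $\End(V_\nu)$-valued heat kernel $H_t^\nu$ is rapidly decreasing and transforms on both sides under the finitely many $K$-types occurring in $\nu$, so its pointwise trace $h_t^\nu=\tr H_t^\nu$ inherits both properties. Hence $\pi_\Gamma(h_t^\nu)$ is defined and splits as $\pi_{\Gamma,d}(h_t^\nu)\oplus\pi_{\Gamma,c}(h_t^\nu)$ relative to (\ref{decup}), and Warner's realization of $L^2_c(\Gamma\bs G)$ as a direct integral over the unitary principal series through the Eisenstein eigenpacket map applies; this map, suitably normalized, is the isometry that supplies the Plancherel factor $\tfrac1{4\pi}$.

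The key step is the intertwining
\[
\pi_\Gamma(h_t^\nu)\,E(e_m,i\lambda,\cdot)=E\bigl(\boldsymbol{\pi}_\lambda(h_t^\nu)e_m,\,i\lambda,\cdot\bigr),
\]
which I would prove by inserting into $(\pi_\Gamma(h_t^\nu)E)(x)=\int_G h_t^\nu(g)\,E(e_m,i\lambda,xg)\,dg$ the defining series of the Eisenstein series (Definitions \ref{enttaeuscht} and \ref{fallingforever}) and interchanging the absolutely convergent integral with the sum over $\Gamma\cap P\bs\Gamma$, which converts convolution on $G$ into the representation action $\boldsymbol{\pi}_\lambda(h_t^\nu)$ on $\boldsymbol{\mathcal{E}}$. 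Consequently $\pi_{\Gamma,c}(h_t^\nu)$ acts fibrewise as $\boldsymbol{\pi}_\lambda(h_t^\nu)$, and expanding this fibre operator in the orthonormal basis $\{e_n\}$ of $\boldsymbol{\mathcal{E}}$ and feeding it through the eigenpacket resolution of the identity produces exactly the reproducing kernel (\ref{contkernel}).

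For the remaining assertions I would argue as follows. The operator $\pi_{\Gamma,d}(h_t^\nu)$ is of trace class by the extension of \cite[Theorem~I.1]{Donnelly} quoted above, and its kernel is the difference $h^\nu_d=h^\nu-h^\nu_c$ of the full automorphic kernel $h^\nu(t;x,y)=\sum_{\gamma\in\Gamma}h_t^\nu(x^{-1}\gamma y)$ and the continuous kernel (\ref{contkernel}). While $h^\nu(t;x,x)$ fails to be integrable over the non-compact quotient $\Gamma\bs G$ because of the cuspidal (continuous-spectrum) contributions, the continuous kernel $h^\nu_c(t;x,x)$ reproduces exactly this non-decaying behaviour in the cusps; hence $h^\nu_d(t;x,x)$ decays there and is integrable along the diagonal, and the trace-class property yields $\Tr(\pi_{\Gamma,d}(h_t^\nu))=\int_{\Gamma\bs G}h^\nu_d(t;x,x)\,dx$. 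Finally the identification $\Tr(e^{-tA^d_\nu})=\Tr(\pi_{\Gamma,d}(h_t^\nu))$ follows from the construction of $h_t^\nu$: on each automorphic representation $\pi$ the vector heat operator $e^{-tA_\nu}$ and the scalar convolution by $h_t^\nu$ have equal trace, since $h_t^\nu=\tr H_t^\nu$ is built precisely so that $\pi(h_t^\nu)$ reproduces the action of $e^{-tA_\nu}$ on the $\nu$-isotypic part of $\pi$.

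The main obstacle is the analytic justification of (\ref{contkernel}) as a genuine, not merely formal, integral kernel: one must control the growth of the Eisenstein series $E(e_n,i\lambda,\cdot)$ and the decay in $\lambda$ of the matrix coefficients $\langle\boldsymbol{\pi}_\lambda(h_t^\nu)e_m,e_n\rangle$, and establish summability over $m,n\in I$, so that the $\lambda$-integral, the double sum, and the spatial integrations may be interchanged. This is where the Schwarz property of $h_t^\nu$ and the standard polynomial bounds on Eisenstein series enter. A secondary point is that $\Gamma$ is only assumed cofinite and possibly non-neat; but Warner's spectral theory is stated for arbitrary cofinite lattices, and the inner product on $\boldsymbol{\mathcal{E}}$ in Definition \ref{sweetsweet} already incorporates the cusp stabilizers through $\Gamma_M$, so no additional argument beyond \cite[Theorem 4.7]{War} is needed in the non-neat case.
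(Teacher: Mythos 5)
The paper does not prove this Proposition at all: it is imported verbatim from Warner \cite[Theorem 4.7]{War}, and the spectral side of Theorem \ref{whattheyknow} is then built on top of it. Your sketch is essentially a reconstruction of the argument behind that citation --- the decomposition (\ref{decup}), the Eisenstein transform diagonalizing convolution on $L^2_c(\Gamma\bs G)$ with the Plancherel factor $\tfrac{1}{4\pi}$, the Donnelly-type trace-class bound for the discrete part, and the identification of $\Tr(e^{-tA_\nu^d})$ with $\Tr(\pi_{\Gamma,d}(h_t^\nu))$ via the $K$-invariance isomorphism --- so at the level of this paper there is no alternative route to compare against; your closing remark that Warner's theory holds for arbitrary cofinite (in particular non-neat) lattices, with the cusp stabilizers entering only through $\Gamma_M$ in Definition \ref{sweetsweet}, is precisely why the author can cite \cite{War} without further argument.

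One step of your sketch fails as literally written, though the repair is routine. You derive the intertwining
\[
\pi_\Gamma(h_t^\nu)\,E(e_m,i\lambda,\cdot)=E\bigl(\boldsymbol{\pi}_\lambda(h_t^\nu)e_m,\,i\lambda,\cdot\bigr)
\]
by substituting the defining series of Definition \ref{enttaeuscht} and interchanging it with the convolution integral, calling the series absolutely convergent. But that series converges only for $\RRe(\lambda)>(d-1)/2$, whereas the kernel formula (\ref{contkernel}) requires the identity exactly on the unitary axis, where the Eisenstein series exists only by meromorphic continuation; at $\lambda\in i\R$ there is no series to interchange. The standard fix is to prove the identity in the half-plane of absolute convergence and then note that both sides are meromorphic in $\lambda$ (convolution against the $K$-finite Schwartz function $h_t^\nu$ preserves meromorphy, $\lambda\mapsto\boldsymbol{\pi}_\lambda(h_t^\nu)$ is entire, and $E$ is regular on the unitary axis), so the identity propagates by analytic continuation. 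With that amendment, and with the analytic control you yourself flag as the main obstacle (polynomial bounds on $E(e_n,i\lambda,\cdot)$, rapid decay in $\lambda$ of $\langle\boldsymbol{\pi}_\lambda(h_t^\nu)e_m,e_n\rangle$, summability over $I$) carried out as in \cite{War}, your outline is a faithful account of the cited proof. Note also that you silently corrected a typo in the statement: the last display should read $\Tr(e^{-tA_\nu^d})=\Tr(\pi_{\Gamma,d}(h_t^\nu))=\int_{\Gamma\bs G}h_d^\nu(t;x,x)\,dx$, with $h_t^\nu$ in place of $\alpha$.
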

By \cite[p. 82]{War}, 
\begin{align*}
\int_{\mathbb{R}}\int_{\Gamma\backslash
G}{{\left|\sum_{k,l}\left<\boldsymbol{\pi}_{i\lambda}(h_t^\nu)e_l,e_k\right>
E^Y(e_k,i\lambda,x)\overline{E}^Y(e_l,i\lambda,x)\right|dx}d\lambda}
<\infty.
\end{align*}
Using Lemma \ref{maassive_donut}, one obtains
\begin{equation*}
\begin{gathered}
\int_{X(Y)}h^\nu_c(t;x,x)\;dx=
\sum_{\substack{\sigma\in\hat{M}\\\sigma=w_0\sigma \\  \left[\nu:\sigma\right]\neq0}
}
\frac{\Tr\left(\boldsymbol{\pi}_{\sigma,0}(h_t^\nu)\mathbf{C}
(\sigma,\nu,0)\right)}{4}+\\ \sum_{\substack{\sigma\in\hat{M}\\
\left[\nu:\sigma\right]\neq
0}}\biggl(\frac{\kappa e^{tc(\sigma)}\log{Y}\dim(\sigma)}{\sqrt{4\pi t}} - \\
\frac{1}{4\pi}\int_{\R}\Tr\left(\boldsymbol{\pi}_{\sigma,
i\lambda}(h_t^\nu)\mathbf{C}(\sigma,\nu,-i\lambda)\frac{d}{dz}
\mathbf{C}(\sigma,\nu,i\lambda)\right)\,d\lambda
\biggr)+o(1),
\end{gathered}
\end{equation*}

as $Y\to\infty$, and hence
\begin{equation}\label{itsmyworld}
\begin{gathered}
\int_{X(Y)}h^\nu(t;x,x)\,dx=
\sum_{\substack{\sigma\in\hat{M}\\
\left[\nu:\sigma\right]\neq0}}
\frac{\kappa e^{tc(\sigma)}\dim(\sigma)\log{Y}}{\sqrt{4\pi t}}
+\sum_j e^{-t\lambda_j} \\
+\sum_{\substack{\sigma\in\hat{M}\\\sigma=w_0\sigma\\
\left[\nu:\sigma\right]\neq
0}}e^{tc(\sigma)}\frac{\Tr(\widetilde{\boldsymbol{C}}(\sigma,\nu,0))}{4} - \\
\frac{1}{4\pi}\sum_{\substack{\sigma\in\hat{M}\\
\left[\nu:\sigma\right]\neq
0}}\int_{\R}e^{-t\left(\lambda^2-c(\sigma)\right)}
\Tr\left(\widetilde{\boldsymbol{C}}
(\sigma,\nu,-i\lambda)\frac{
d}{dz}\widetilde{\boldsymbol{C}}(\sigma,\nu,i\lambda)\right)\,d\lambda
+o(1)
\end{gathered}
\end{equation}
It follows that $\int_{X(Y)}h^\nu(t;x,x)\,dx$ has an asymptotic expansion as $Y$ tends to $\infty$; it is easy to check that $\textnormal{Tr}_{reg}\left(e^{-tA_\nu}\right)$ from (\ref{regtrace2}) is the constant term in (\ref{itsmyworld}). Moreover, the right hand side of (\ref{itsmyworld}) equals the spectral side of Selberg trace formula from  \cite[Theorem 4.2]{hoff} applied to $e^{-t A_\nu}$.
\qed

\section{Fourier transform of the weighted orbital integrals}
\label{woint}
In this section we recall 
the Fourier transform of the distributions $\mathcal{I}(\alpha)$ and $E^{cusp}(\al)$ \cite{HO} and express it in a more convenient way.
\begin{Th}\label{FTorbint}\cite[Corollary~on~p.96]{HO}
For every $K$-finite $\alpha\in C^{2}(G)$ one has
\begin{align*}
\mathcal{I}(\alpha)=\frac{\kappa}{4\pi}\sum_{\sigma\in\hat{M}}\int_{\mathbb{R
}}{\Omega(\check{\sigma},-\lambda)\Theta_{\sigma,\lambda}(\alpha)d\lambda},
\end{align*}
where
\begin{equation*}
\begin{gathered}
\Omega(\sigma,\lambda):=-2\dim(\sigma)\gamma-\\ \frac{1}{2}
\sum_{\alpha\in\Delta^{+}(\mathfrak{g}_\C,\mathfrak{a}_\C)}\frac{\Pi(s_{\alpha}\lambda_{\sigma})
}{\Pi(\rho_{M})}\left(\psi(1+\lambda_{\sigma}(H_{\alpha}))+\psi(1-\lambda_{
\sigma}(H_{\alpha}))\right).
\end{gathered}
\end{equation*}\qed
\end{Th}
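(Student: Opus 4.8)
The plan is to recall Hoffmann's derivation \cite{HO}, which rests on Harish-Chandra's Plancherel theorem together with an explicit evaluation of the weighted orbital integral for the real rank one group $G=\SO_0(1,2n+1)$.

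First I would record the structural fact that $\mathcal{I}(\alpha)$ is a \emph{tempered invariant} distribution: it arises from the unipotent (parabolic) conjugacy classes and, being a term of the \emph{invariant} trace formula, it is invariant under conjugation. By Harish-Chandra's Plancherel theorem such a distribution is determined by its pairing with the tempered characters, and for a group of real rank one the only tempered representations a weighted unipotent integral detects are the unitary principal series $\pi_{\sigma,i\lambda}$, $\sigma\in\hat{M}$, $\lambda\in\R$. Hence $\mathcal{I}(\alpha)$ is forced to have the shape $\frac{\kappa}{4\pi}\sum_{\sigma}\int_\R \Omega(\check\sigma,-\lambda)\,\Theta_{\sigma,\lambda}(\alpha)\,d\lambda$, and the whole content of the theorem is the explicit identification of the spectral density $\Omega$; the factor $\kappa$ reflects that the same local computation is repeated at each of the $\kappa$ cusps.

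Second, to pin down $\Omega$ I would compute the Fourier transform of the weighted orbital integral of Definition~\ref{tee} directly. Using Arthur's $(G,M)$-family formalism the weight $v(x)$ is realized as a derivative in the spectral parameter, so on the spectral side the ordinary orbital integral contributes the Plancherel density while the weight contributes its logarithmic derivative. For $\SO_0(1,2n+1)$ the normalizing factor of the intertwining operator $J_{\bar{P}_{0}|P_{0}}(\sigma,z)$ from \eqref{aqwrer} is, by Gindikin--Karpelevich, an explicit product of Gamma functions indexed by the roots $\alpha\in\Delta^{+}(\mathfrak{g}_\C,\mathfrak{a}_\C)$ that do not vanish on $\mathfrak{a}$. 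Differentiating the logarithm of these Gamma factors produces exactly the digamma terms $\psi(1+\lambda_\sigma(H_\alpha))+\psi(1-\lambda_\sigma(H_\alpha))$, while the accompanying Weyl-type polynomial factors assemble into $\Pi(s_\alpha\lambda_\sigma)/\Pi(\rho_M)$. The remaining constant $-2\dim(\sigma)\gamma$ appears upon regularizing the logarithmically divergent unipotent orbital integral: after subtracting the divergence the finite part carries the Euler--Mascheroni constant $\gamma$, weighted by $\dim(\sigma)$.

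The main obstacle is this explicit evaluation: one must fix the normalization of $J_{\bar{P}_{0}|P_{0}}(\sigma,z)$, compute its logarithmic derivative with the correct combinatorics over all positive roots, and control the limit that isolates the regularization constant. These computations are precisely those carried out in \cite{HO}, from which the statement is quoted; for the present paper only the polynomial growth of $\Omega$ in $\lambda$ and its dependence on $\dim(\sigma)$ will be needed in the subsequent Mellin-transform estimates.
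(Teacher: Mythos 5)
Your proposal is correct and takes essentially the same route as the paper: the paper offers no independent proof of this theorem, stating it as a direct quotation of Hoffmann \cite{HO} (Corollary on p.~96) with a \qed, and your outline (invariance plus the tempered/Plancherel structure forcing a principal-series expansion, Gindikin--Karpelevich Gamma factors whose logarithmic derivatives yield the digamma terms, and regularization of the divergent unipotent integral producing $-2\dim(\sigma)\gamma$) is a faithful summary of Hoffmann's own derivation. In both your write-up and the paper the substantive computations are deferred to \cite{HO}, so nothing further needs to be bridged.
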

For regular $\gamma$, the Fourier transform of the distribution $I_L(\gamma, \al)$ was computed in \cite[Theorem 1]{HO}. Later in the paper  he treated non-regular elements as well, for example to obtain \cite[Corollary~on~p.96]{HO}. In this paper we will focus only on regular $\gamma$ for the following two reasons. First, non-regular elements can be  treated in a similar manner. Second, we are most interested in the case when $\Gamma = \PSL(2, \Z \oplus i \Z)$ and $\PSL(2, \Z \oplus \frac{-1+i \sqrt{3}}{2} \Z)$, where all elements, except for the identity, are regular. For the reader's convenience, we first consider the Fourier transform of weighted orbital integrals on $\SO_0(1,3)$ and then proceed to $\SO_0(1,2n+1)$.

\subsection{Fourier transform on $\SO_0(1,3)$} We use
  the notations of \cite{Pf}. For a representation $\sigma \in \widehat{M}$ with highest weight $k_2(\sigma)$ and $\lambda \in \R$ define $\lambda_\sigma \in \mathfrak{h}_\C^*$ by 
 \begin{equation}\label{sugar}
 \lambda_\sigma := i \lambda e_1 + k_2(\sigma) e_2.
 \end{equation}
 Note that the non-identity element $w_0$ of the Weyl group $W$ from Subsection \ref{whatareyoudoingman} acts on $\lambda_\sigma$ as
 \begin{equation}\label{salt}
 w_0 \lambda_\sigma = - i \lambda e_1 - k_2(\sigma) e_2.
 \end{equation}
 Recall that $\Sigma_P^+ = \{e_1 - e_2 \} \cup \{ e_1 + e_2  \}$, and
 \begin{equation}
 \begin{gathered}
 \lambda_\sigma(H_{e_1 - e_2}) = i \lambda - k_2(\sigma), \quad \lambda_\sigma(H_{e_1 + e_2}) =  i \lambda + k_2(\sigma), \\
 \lambda_{w_0 \sigma}(H_{e_1 - e_2}) = - i \lambda + k_2(\sigma), \quad \lambda_{w_0\sigma}(H_{e_1 + e_2}) = - i \lambda - k_2(\sigma).
 \end{gathered}
 \end{equation}
 Let 
 \begin{equation}\label{howcanIbelost}
 \gamma = \left( \begin{smallmatrix}
  1 & 0 & & \\ 0 & 1 & & \\ & & \cos (2 \phi) & \sin (2 \phi) \\ & & -\sin (2 \phi) & \cos (2 \phi)  
  \end{smallmatrix} \right),
 \end{equation} 
 then 
 $$\gamma^{n (e_1 - e_2)} = e^{- 2 i n \phi}, \quad \gamma^{n (e_1 + e_2)} = e^{ 2 i n \phi},$$
 and
 $$ \gamma^{\lambda_\sigma} = e^{2 i \phi \, k_2(\sigma)}, \quad \gamma^{w_0 \lambda_\sigma} = e^{- 2 i \phi \, k_2(\sigma)}.$$
Then \cite[Theorem 1]{HO} reads

 \begin{Th}
\label{fables}
  Let $\gamma \in \Gamma_M(G)$, then 
 \begin{equation}
 I_L(\gamma, \al) = \frac{1}{2 \pi i} \sum_{\sigma \in \widehat{M}} \int_\R \Omega(\gamma, \breve{\sigma}) \Theta_{\sigma,\lambda} (\al) d \lambda,
 \end{equation}
 where
 \begin{equation}\label{Omega_example}
 \begin{gathered}
 \Omega(\gamma, \sigma) = \frac{1}{2} \left[    e^{2 i \phi k_2(\sigma)} \left(   \sum_{n=1}^\infty \frac{e^{2 i n \phi}}{n+i \lambda - k_2(\sigma)} + \sum_{n=1}^\infty \frac{e^{- 2 i n \phi}}{n + i\lambda + k_2(\sigma)} \right) + \right. \\
 \left.    e^{- 2 i \phi k_2(\sigma)} \left(   \sum_{n=1}^\infty \frac{e^{2 i n \phi}}{n-i \lambda + k_2(\sigma)} + \sum_{n=1}^\infty \frac{e^{- 2 i n \phi}}{n-i\lambda - k_2(\sigma)} \right) \right].
 \end{gathered}
 \end{equation}
 \end{Th}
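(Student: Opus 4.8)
The plan is to derive the explicit formula \eqref{Omega_example} by specializing Hoffmann's general result \cite[Theorem 1]{HO} to the rank-one group $G=\SO_0(1,3)$, where $M=\SO(2)$. In its general form \cite[Theorem 1]{HO} expresses the Fourier transform of $I_L(\gamma,\al)$ as $\frac{1}{2\pi i}\sum_{\sigma\in\widehat{M}}\int_\R\Omega(\gamma,\breve{\sigma})\Theta_{\sigma,\lambda}(\al)\,d\lambda$, with $\Omega(\gamma,\sigma)$ assembled from a sum over the Weyl group $W=\{1,w_0\}$ and over the positive roots $\alpha\in\Sigma_P^+$ of geometric-type series. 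Specialized to the present setting it takes the shape
\begin{equation*}
\Omega(\gamma,\sigma)=\frac{1}{2}\sum_{w\in W}\gamma^{w\lambda_\sigma}\sum_{\alpha\in\Sigma_P^+}\sum_{n=1}^\infty\frac{\gamma^{-n\alpha}}{n+\lambda_{w\sigma}(H_\alpha)}.
\end{equation*}
My first step is therefore to recall this statement and to fix, once and for all, Hoffmann's normalizations for $\breve{\sigma}$, for the character $\gamma^{\lambda_\sigma}$, and for the pairing $\lambda_\sigma(H_\alpha)$, so that the bookkeeping below is unambiguous.

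The second step is purely computational and uses only the data already gathered in this subsection. For $M=\SO(2)$ the irreducibles $\sigma\in\widehat{M}$ are labelled by $k_2(\sigma)$, and with $\gamma$ as in \eqref{howcanIbelost} the text records $\gamma^{n(e_1-e_2)}=e^{-2in\phi}$, $\gamma^{n(e_1+e_2)}=e^{2in\phi}$, $\gamma^{\lambda_\sigma}=e^{2i\phi\,k_2(\sigma)}$, and $\gamma^{w_0\lambda_\sigma}=e^{-2i\phi\,k_2(\sigma)}$, together with the four values $\lambda_\sigma(H_{e_1\mp e_2})=i\lambda\mp k_2(\sigma)$ and $\lambda_{w_0\sigma}(H_{e_1\mp e_2})=-i\lambda\pm k_2(\sigma)$. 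Substituting these into the displayed formula, the $w=1$ summand contributes $\gamma^{\lambda_\sigma}=e^{2i\phi k_2(\sigma)}$ times the two series indexed by $\alpha=e_1-e_2$ and $\alpha=e_1+e_2$, which are exactly the two series inside the first bracket of \eqref{Omega_example}; likewise the $w=w_0$ summand produces $e^{-2i\phi k_2(\sigma)}$ times the two series of the second bracket. Tracking the overall factor $\tfrac12$ then yields \eqref{Omega_example} verbatim.

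Two points require care rather than new ideas, and I expect the first to be the only genuine obstacle. First, one must reconcile the contragredient/Weyl-conjugate $\breve{\sigma}$ appearing in the integrand with the $\sigma$ used to define $\Omega$: since $\Gamma_M(P)\subset M$ and $k_2(\breve{\sigma})=-k_2(\sigma)$, the substitution $\sigma\mapsto\breve{\sigma}$ merely interchanges the two Weyl summands and leaves \eqref{Omega_example} invariant, but checking this against Hoffmann's precise normalization of $\Omega$ and the sign conventions for $k_2$ is the delicate part. Second, each inner series $\sum_{n\geq1}\gamma^{-n\alpha}/(n+\lambda_{w\sigma}(H_\alpha))$ converges only conditionally, and I would note that for a nontrivial $\gamma$ one has $e^{2i\phi}\neq1$ (i.e.\ $\phi\notin\pi\Z$), so each series is the boundary value in $\lambda$ of the Lerch-type holomorphic function already present in \cite[Theorem 1]{HO}; no regularization beyond Hoffmann's is needed. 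Everything else is direct substitution into the formula of \cite[Theorem 1]{HO}.
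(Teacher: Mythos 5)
Your proposal is correct and follows essentially the same route as the paper: the paper obtains Theorem \ref{fables} purely by direct substitution of the recorded values $\gamma^{n(e_1\mp e_2)}$, $\gamma^{w\lambda_\sigma}$ and $\lambda_{w\sigma}(H_{e_1\mp e_2})$ into Hoffmann's general formula \cite[Theorem 1]{HO}, exactly as you do, and your bookkeeping (the $w=1$ summand giving the first bracket, $w=w_0$ the second, with the overall factor $\tfrac12$) reproduces \eqref{Omega_example} verbatim. Your two cautionary points --- reconciling the contragredient $\breve{\sigma}$ with the $\sigma$ labelling $\Omega$, and the conditional convergence of the series --- are left implicit in the paper, so they refine rather than depart from its argument.
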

For convenience, let us express $\Omega(\gamma, \sigma)$ in terms of the digamma function. For this denote
$$ b(s,z) = \sum_{n=1}^\infty \frac{z^n}{n+s}.$$

\begin{Lemma}\label{kisa} We have that
 $$b(s, e^{i \frac{2 \pi}{m}}) = \frac{1}{m} \sum_{k=0}^{m-1} e^{\frac{2 \pi i}{m} (m-k)} \psi\left(\frac{s-k}{m}+1\right),$$
 where $\psi$ is the digamma function, $m \in \N$. 
\end{Lemma}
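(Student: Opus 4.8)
The plan is to evaluate $b(s,\zeta)$ with $\zeta := e^{2\pi i/m}$ by splitting the summation index according to its residue class modulo $m$ and recognizing each resulting subseries as a shifted copy of the defining series of the digamma function. Since $\zeta^m=1$, the factor $\zeta^n$ depends only on $n \bmod m$; writing $n = m\ell + j$ with $\ell \ge 0$ and $j \in \{1,\dots,m\}$ gives $\zeta^n=\zeta^j$ and $n+s = m\bigl(\ell + \tfrac{j+s}{m}\bigr)$, so that
\begin{equation*}
b(s,\zeta)=\frac{1}{m}\sum_{j=1}^{m}\zeta^{j}\sum_{\ell=0}^{\infty}\frac{1}{\ell+\frac{j+s}{m}}.
\end{equation*}
Before manipulating this, I would first record that for $m\ge 2$ the original series converges (conditionally) by Dirichlet's test, because the partial sums of $\zeta^{n}$ stay bounded away from the pole at $z=1$; grouping the terms of a convergent series into consecutive blocks of length $m$ is therefore legitimate, which is what makes the index substitution above valid at the level of grouped blocks.

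The second step is to remove the divergence of the inner sums. Each inner sum $\sum_{\ell\ge 0}\bigl(\ell+\tfrac{j+s}{m}\bigr)^{-1}$ diverges, but its divergent part is independent of $j$, and the coefficients obey $\sum_{j=1}^{m}\zeta^{j}=0$ for $m\ge 2$. Concretely I would insert the $j$-independent comparison term $\tfrac{1}{\ell+1}$ and invoke the classical identity $\psi(x)+\gamma=\sum_{\ell=0}^{\infty}\bigl(\tfrac{1}{\ell+1}-\tfrac{1}{\ell+x}\bigr)$ with $x=\tfrac{j+s}{m}$, where $\gamma$ is Euler's constant. Because $\sum_{j}\zeta^{j}=0$ annihilates both the divergent harmonic tail and the additive constant $\gamma$, only the digamma terms survive, and one is left with $\tfrac{1}{m}\sum_{j=1}^{m}\zeta^{j}\,\psi\bigl(\tfrac{j+s}{m}\bigr)$, up to the overall sign dictated by the digamma identity, which must be tracked with care. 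Finally I would re-index by $k=m-j$, using $\zeta^{m-k}=e^{\frac{2\pi i}{m}(m-k)}$ and $\tfrac{(m-k)+s}{m}=\tfrac{s-k}{m}+1$, to bring the expression into the stated form $\tfrac{1}{m}\sum_{k=0}^{m-1}e^{\frac{2\pi i}{m}(m-k)}\psi\bigl(\tfrac{s-k}{m}+1\bigr)$.

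The main obstacle is the rigorous justification of the cancellation and the interchange of the two summations in the middle step: the inner $\ell$-sums diverge individually, so the manipulation only makes sense after regularization, and one must argue that the conditionally convergent blocked series may be split off and the order of summation swapped. I expect the cleanest way to make this precise is to note that, thanks to $\sum_{j}\zeta^{j}=0$, each length-$m$ block equals $\sum_{j=1}^{m}\zeta^{j}\bigl(\tfrac{1}{m\ell+j+s}-\tfrac{1}{m\ell+s}\bigr)=O(\ell^{-2})$, so the grouped series converges absolutely and Fubini applies; comparing this with the analogous absolutely convergent block decomposition of $\sum_{j}\zeta^{j}\psi\bigl(\tfrac{j+s}{m}\bigr)$ then identifies the two and pins down the sign, which one can cross-check in the case $m=2$ against the known closed form of $\sum_{n\ge1}(-1)^{n}/(n+s)$.
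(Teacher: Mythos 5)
Your method is correct and in fact does more than the paper's own proof: the paper establishes the identity only for $m=2$, by writing the partial sums $C_n(s)$ of $b(s,-1)$ as combinations of the truncations $A_n(s)$, $B_n(s)$ occurring in the limit formula $\psi(1+z)=\lim_{n\to\infty}\bigl(\ln n-\sum_{k=1}^{n}\tfrac{1}{z+k}\bigr)$, letting the $\ln n$ terms cancel in $\tfrac{1}{2}(A_n-B_n)$, and then asserts that general $m$ is analogous and omits it. Your argument --- grouping the conditionally convergent series into residue-class blocks modulo $m$, using $\sum_{j=1}^{m}\zeta^{j}=0$ (with $\zeta=e^{2\pi i/m}$) to absorb both the divergent harmonic part and Euler's constant, and justifying the interchange of sums by the $O(\ell^{-2})$ block bound --- is precisely the uniform proof for all $m\geq 2$ that the paper leaves out, and it is cleaner than iterating the paper's partial-sum bookkeeping residue class by residue class.

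However, the sign you deferred is not a minor detail, and you should carry your own cross-check out: it reveals that the lemma as printed is false by a sign. Since $\psi(x)=-\gamma+\sum_{\ell\geq 0}\bigl(\tfrac{1}{\ell+1}-\tfrac{1}{\ell+x}\bigr)$ and $\tfrac{1}{\ell+(j+s)/m}=\tfrac{m}{m\ell+j+s}$, your exchange of summations gives $\sum_{j=1}^{m}\zeta^{j}\,\psi\bigl(\tfrac{j+s}{m}\bigr)=-m\,b(s,\zeta)$, that is,
\begin{equation*}
b(s,\zeta)=-\frac{1}{m}\sum_{j=1}^{m}\zeta^{j}\,\psi\Bigl(\frac{s+j}{m}\Bigr)
=-\frac{1}{m}\sum_{k=0}^{m-1}e^{\frac{2\pi i}{m}(m-k)}\,\psi\Bigl(\frac{s-k}{m}+1\Bigr),
\end{equation*}
the \emph{negative} of the stated right-hand side. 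Your proposed test case confirms this: at $m=2$, $s=0$ the left side is $\sum_{n\geq 1}(-1)^{n}/n=-\log 2$, while the stated right-hand side equals $\tfrac{1}{2}\bigl(\psi(1)-\psi(\tfrac{1}{2})\bigr)=+\log 2$. Note that the paper's own computation agrees with you, not with its lemma: it concludes $b(s,-1)=\tfrac{1}{2}\bigl(\psi(\tfrac{s}{2}+\tfrac{1}{2})-\psi(\tfrac{s}{2}+1)\bigr)$, which is again the negative of the displayed statement it claims to have proved. So your argument, once the sign is made explicit, proves the corrected identity; no argument can prove the identity exactly as stated.
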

Every $\gamma\in \Gamma_M(P)$ is of finite order by Remark \ref{marinad}, thus Theorem \ref{fables} and Lemma \ref{kisa} immediately imply the following corollary:
\begin{Corollary} For  $\Omega(\gamma, \sigma)$ as in (\ref{Omega_example}) we have:
$$\Omega(\gamma, \sigma) = \sum_{j \in J}  c_j \cdot \phi\big(a_j + i  b_j \cdot \lambda  \big),$$
where $J$ and $a_j$ do not depend on $k_2(\sigma)$; $c_j = O(1)$, $b_j = O(k_2(\sigma))$ as $k_2(\sigma) \to \infty$; and $\Omega(\gamma, \sigma)$ has no pole at $\lambda = 0$. 
\end{Corollary}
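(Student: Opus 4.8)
The plan is to obtain the statement directly from the closed form (\ref{Omega_example}) by recognising each inner sum as a value of the auxiliary function $b(s,z)=\sum_{n\geq1}z^n/(n+s)$ and then applying Lemma~\ref{kisa}. First I would use Remark~\ref{marinad}: since $\gamma\in\Gamma_M(P)$ has finite order, say $m$, the rotation angle obeys $2\phi\in\frac{2\pi}{m}\Z$, so $z:=e^{2i\phi}$ is an $m$-th root of unity, with $z\neq1$ because $\gamma\neq1$. Matching the definition of $b(s,z)$ against (\ref{Omega_example}), the four series are exactly $b(i\lambda-k_2(\sigma),z)$, $b(i\lambda+k_2(\sigma),\bar z)$, $b(-i\lambda+k_2(\sigma),z)$ and $b(-i\lambda-k_2(\sigma),\bar z)$, each multiplied by a prefactor $e^{\pm2i\phi k_2(\sigma)}$ of modulus one. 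Applying Lemma~\ref{kisa} to all four terms converts $\Omega(\gamma,\sigma)$ into a finite $\C$-linear combination of digamma values, which is the shape asserted. This is essentially the ``immediate'' reduction promised in the text; the content is in reading off the three quantitative claims.

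Next I would establish those claims. Each invocation of Lemma~\ref{kisa} produces $m$ digamma summands, so $\Omega(\gamma,\sigma)$ is a sum of at most $4m$ terms; as $m$ is determined by $\gamma$ alone, the index set $J$ does not depend on $k_2(\sigma)$. Every argument of $\psi$ is affine in $\lambda$, of the form $1+\frac{\pm i\lambda\mp k_2(\sigma)-k}{m}$ with $0\leq k\leq m-1$, so the $\lambda$-slope is $b_j=\pm1/m=O(1)\subseteq O(k_2(\sigma))$. The base point depends on $k_2(\sigma)$ only through the residue $k_2(\sigma)\bmod m$: pulling the integer translation $\lfloor k_2(\sigma)/m\rfloor$ outside $\psi$ by repeated use of the functional equation $\psi(w+1)=\psi(w)+1/w$ shows that the genuinely digamma part has base points $a_j$ ranging over the fixed finite set $\{1-(r+k)/m\}$, which does not grow with $k_2(\sigma)$; this is the sense in which $a_j$ is independent of $k_2(\sigma)$. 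Finally each $c_j$ is a product of $1/m$, a unit-modulus root of unity supplied by Lemma~\ref{kisa}, and the prefactor $e^{\pm2i\phi k_2(\sigma)}$, whence $c_j=O(1)$.

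For the regularity claim I would examine $\lambda=0$. Only the series whose denominator $n\mp k_2(\sigma)$ can vanish for some $n\geq1$ are singular there, namely the first and fourth when $k_2(\sigma)\geq1$ (and the middle two when $k_2(\sigma)\leq-1$); in the digamma picture these are precisely the summands whose argument meets a non-positive integer at $\lambda=0$ and therefore acquires a simple pole of residue $-1$. I would compute the two resulting principal parts and combine them: after inserting the prefactors, the coefficient of $1/\lambda$ is proportional to $\IIm z^{2k_2(\sigma)}$. I expect the verification that this principal part drops out to be the main obstacle. The residue is an odd function of $k_2(\sigma)$, so the natural route is to exploit the $w_0$-symmetry $k_2(\sigma)\mapsto-k_2(\sigma)$ of (\ref{salt}) — which pairs the two singular terms and governs how $\Omega(\gamma,\sigma)$ enters the spectral sum of Theorem~\ref{fables} — to cancel it and leave the relevant contribution regular at $\lambda=0$. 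Tracking this pairing carefully, rather than the formal digamma expansion, is the one step that is not purely mechanical.
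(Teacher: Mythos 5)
Your proposal follows the paper's own route: the paper's entire proof is the sentence preceding the corollary, namely that every $\gamma\in\Gamma_M(P)$ has finite order by Remark \ref{marinad}, so that Theorem \ref{fables} and Lemma \ref{kisa} ``immediately imply'' the statement. Your identification of the four series in (\ref{Omega_example}) as $b(i\lambda-k_2(\sigma),z)$, $b(i\lambda+k_2(\sigma),\bar z)$, $b(-i\lambda+k_2(\sigma),z)$, $b(-i\lambda-k_2(\sigma),\bar z)$ with $z=e^{2i\phi}$ a root of unity, followed by Lemma \ref{kisa}, is exactly that reduction, carried out correctly (modulo the harmless point that Lemma \ref{kisa} is stated for $z=e^{2\pi i/m}$ and you need its obvious variant for a non-primitive root of unity).

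What you add beyond the paper is substantive, and on both counts you are right to be suspicious. First, Lemma \ref{kisa} outputs digamma arguments $1+(\pm i\lambda\mp k_2(\sigma)-k)/m$, so directly one gets $b_j=\pm 1/m$ and base points $a_j$ drifting linearly in $k_2(\sigma)$; your renormalization via $\psi(w+1)=\psi(w)+1/w$ confines the $a_j$ to a fixed finite set, but at the cost of $O(k_2(\sigma))$ rational correction terms $1/(w+j)$, so the corollary's displayed formula (digamma terms only, $J$ and $a_j$ fixed) cannot hold in both readings at once. The paper itself silently switches between them: in the proof of Theorem \ref{wagnerreloaded} it first takes $a_j$ growing linearly in $m$ with $b_j$ fixed, then performs precisely your functional-equation split into a digamma sum over $J'$ and a rational sum over $J''$ with $|J''|$ growing linearly. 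Second, your residue computation at $\lambda=0$ is correct: (\ref{Omega_example}) taken literally has a simple pole at $\lambda=0$ with residue $\sin(4k_2(\sigma)\phi)$, i.e.\ your $\IIm\, z^{2k_2(\sigma)}$, coming from the $n=k_2(\sigma)$ terms of the first and fourth series, and this does not vanish for a single $\sigma$. Regularity holds only for the combination $\Omega(\gamma,\sigma)+\Omega(\gamma,w_0\sigma)$, since the residue is odd under $k_2(\sigma)\mapsto-k_2(\sigma)$ as in (\ref{salt}) — and that combined form is exactly what the paper integrates in the proof of Theorem \ref{wagnerreloaded}. So your one ``non-mechanical'' step is not merely prudent: without the $w_0$-pairing the per-$\sigma$ regularity claim is false as printed, and the paper offers no argument at this point at all.
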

\begin{proof}[Proof of Lemma \ref{kisa}]
  We will consider the case when $m = 2$ only. The proof for other $m \in \N$ is done by analogy, but with more technicalities, therefore we omit it. Recall that
 $$ \psi(1+z) = \lim_{n \to \infty} \left(\ln n - \frac{1}{z+1} - \ldots - \frac{1}{z+n}  \right), \quad z\neq -1, -2, \ldots.$$
Hence
 \begin{equation}\label{riegel}
  \psi\left(\frac{s-1}{2}+1\right) = \lim_{n \to \infty}  \underbrace{ \left(  \ln n - \frac{2}{s+1} - \frac{2}{s+3} - \ldots - \frac{2}{s-1+2n} \right) }_{=:A_n(s)},
 \end{equation}
 \begin{equation}\label{droegel}
  \psi\left( \frac{s}{2}+1 \right) = \lim_{n \to \infty} \underbrace{  \left( \ln n - \frac{2}{s+2} - \frac{2}{s+4} - \ldots - \frac{2}{s+2n} \right) }_{=: B_n(s)}.
 \end{equation}
Put
\begin{equation}\label{windy}
 C_n(s) := - \frac{1}{s+1} + \frac{1}{s+2}  -  \ldots + (-1)^n \frac{1}{s+n},
\end{equation}
 then 
 $$b(s,-1) = \lim_{n \to \infty} C_n(s).$$  
It follows from (\ref{riegel}), (\ref{droegel}) and (\ref{windy}) that
  $$ C_{2n} = \frac{A_n(s) - B_n(s)}{2}, \quad C_{2n+1} = \frac{A_{n+1}(s)-B_{n}(s)}{2},$$
  hence
  $$ \lim_{n \to \infty} C_{2n}(s) = \lim_{n \to \infty} C_{2n+1}(s) = \lim_{n\to \infty} \frac{A_n(s) - B_n(s)}{2}$$
  and
  $$ b(s,-1) = \lim_{n \to \infty} C_n = \lim_{n\to \infty} \frac{A_n(s) - B_n(s)}{2} = \frac{\psi \left(  \frac{s}{2} + \frac{1}{2}  \right) - \psi\left( \frac{s}{2}+ 1  \right)}{2}, $$
 that proves Lemma \ref{kisa} for $m=2$. 
 \end{proof}

\subsection{Fourier transform on $\SO_0(1,2n+1)$}
 
Lemma \ref{kisa} together with \cite[Theorem 1]{HO} implies:
\begin{Th}\label{FTorbint}
For every $K$-finite $\alpha\in C^{2}(G)$ one has
\begin{align*}
\mathcal{E}^{cusp}(\alpha)=\sum_{\sigma\in\hat{M}}\int_{\mathbb{R
}}{\Omega^{cusp}(\check{\sigma},-\lambda)\Theta_{\sigma,\lambda}(\alpha)d\lambda},
\end{align*}
where
\begin{align*}
\Omega^{cusp}(\sigma,\lambda):= \sum_{j \in I} c_j \psi(a_j + b_j i \lambda).
\end{align*}
Above $I$ is a finite set; $a_j,b_j \in \R$; $c_j \in \C$; $\psi$ is a digamma function; and $\Omega^{cusp}(\sigma,\lambda)$ is regular at $\lambda = 0$. Let $(\tau_2+m)e_1+\ldots + (\tau_{n+1}+m)e_{n+1}$ be the highest weight of $\sigma$. Then $a_j = O(1)$, $c_j=O(1)$, $|J|=O(1)$ and $b_j=O(m)$ when $m \to \infty$.\qed
\end{Th}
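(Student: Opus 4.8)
The plan is to reduce the statement to the scalar digamma identity of Lemma~\ref{kisa}, applied termwise to the explicit root expansion of the weighted orbital integrals, and then to read off the four size bounds. I would first recall from Definition~\ref{ibringthemalltolight} that $\mathcal{E}^{cusp}(\al)=\sum_{\gamma\in\Gamma_M(P)}C(\gamma)\,I_L(\gamma,\al)$, and from Remark~\ref{marinad} that $\Gamma_M(P)\subset M$ is finite and consists of elements of finite order. Since the constants $C(\gamma)$ are independent of $m$, it suffices to analyse a single $I_L(\gamma,\al)$ for $\gamma$ of finite order and then add the finitely many contributions.

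Next I would invoke the $\SO_0(1,2n+1)$ form of \cite[Theorem~1]{HO}, the higher-rank analogue of Theorem~\ref{fables}, to write
\[
I_L(\gamma,\al)=\frac{1}{2\pi i}\sum_{\sigma\in\widehat{M}}\int_{\R}\Omega(\gamma,\breve{\sigma})\,\Theta_{\sigma,\lambda}(\al)\,d\lambda,
\]
where $\Omega(\gamma,\sigma)$ is a finite sum, indexed by the positive roots of $(\mathfrak{g}_\C,\mathfrak{a}_\C)$, of the character value $\Theta_{\breve{\sigma}}(\gamma)$ multiplied by series $b(\lambda_\sigma(H_\alpha),\gamma^\alpha)=\sum_{n\ge1}(\gamma^\alpha)^n/(n+\lambda_\sigma(H_\alpha))$. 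The crucial structural point, already visible in (\ref{Omega_example}), is that for a nontrivial regular $\gamma$ this expansion carries only the character $\Theta_{\breve{\sigma}}(\gamma)$ and no Plancherel polynomial prefactor, in contrast to the identity contribution $\mathcal{I}(\al)$. Summing over $\gamma$ puts $\mathcal{E}^{cusp}$ into the asserted outer shape with $\Omega^{cusp}=(2\pi i)^{-1}\sum_\gamma C(\gamma)\,\Omega(\gamma,\cdot)$, and the problem reduces to the structure of each scalar series $b$.

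The core step is scalar. Since $\gamma$ has finite order $m_\gamma$, each $\gamma^\alpha$ is a root of unity $e^{2\pi i\ell/m_\gamma}$, so every $b(\lambda_\sigma(H_\alpha),\gamma^\alpha)$ is covered by Lemma~\ref{kisa}, together with its immediate extension to an arbitrary root of unity $z=e^{2\pi i\ell/m}$ (proved by the same splitting of the summation index into residue classes modulo $m$). The lemma rewrites each $b$-series as a finite $\C$-linear combination of digamma values $\psi\!\big(\tfrac{\lambda_\sigma(H_\alpha)-k}{m_\gamma}+1\big)$ with $0\le k<m_\gamma$. As $\lambda_\sigma(H_\alpha)=i\lambda\mp k_i(\sigma)$ is affine in $\lambda$ and in the highest-weight components of $\sigma$, each argument is affine in $\lambda$; substituting into $\Omega(\gamma,\cdot)$ and collecting the finitely many terms yields $\Omega^{cusp}(\sigma,\lambda)=\sum_{j\in I}c_j\psi(a_j+b_j i\lambda)$ with $I$ finite.

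It then remains to verify the regularity and the bounds. Regularity at $\lambda=0$ I would obtain as in the $\SO_0(1,3)$ corollary after Theorem~\ref{fables}: a digamma factor can be singular only at a nonpositive integer argument, and the $\lambda\mapsto-\lambda$ symmetry relating the two groups of terms carrying the character factors $\gamma^{\lambda_\sigma}$ and $\gamma^{w_0\lambda_\sigma}$ forces the residues to cancel. The bound $|I|=O(1)$ holds because the number of terms is controlled by the product of the number of positive roots, $m_\gamma$, and $\#\Gamma_M(P)$, all independent of $m$. For $c_j=O(1)$ I would use that each coefficient is a product of $C(\gamma)$, the rational constants $1/m_\gamma$ from Lemma~\ref{kisa}, and the character value $\Theta_{\breve\sigma}(\gamma)$; by the Weyl character formula the character of a fixed regular element of finite order stays bounded as the highest weight grows, which is exactly why no factor of polynomial size in $m$ appears. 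Finally, since the highest weight of $\sigma$ has size $O(m)$ and enters the digamma arguments linearly, organising these affine arguments yields the stated split $a_j=O(1)$, $b_j=O(m)$. I expect the main obstacle to be precisely this last bookkeeping: one must arrange the reduction, using the recurrence $\psi(x+1)=\psi(x)+1/x$, so that the $O(m)$-sized part of each argument is carried by exactly the coefficient $b_j$ while $a_j$ remains $O(1)$ and the number of surviving digamma terms stays bounded, and one must confirm that combining the several roots and the elements of $\Gamma_M(P)$ creates no spurious poles at $\lambda=0$. The extension of Lemma~\ref{kisa} to general roots of unity is routine by comparison.
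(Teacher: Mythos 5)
Your proposal follows essentially the same route as the paper: the paper derives this theorem precisely by combining Hoffmann's Fourier inversion for the invariant weighted orbital integrals \cite[Theorem 1]{HO} with Lemma~\ref{kisa}, exactly as you do, after reducing $\mathcal{E}^{cusp}$ to the finitely many $I_L(\gamma,\al)$ with $\gamma\in\Gamma_M(P)$ of finite order. The paper in fact states the theorem with no further argument, so your elaboration (the termwise application of Lemma~\ref{kisa} to the series $b(\lambda_\sigma(H_\alpha),\gamma^\alpha)$, the cancellation of poles at $\lambda=0$, and the boundedness of the character values $\Theta_{\breve\sigma}(\gamma)$ for regular $\gamma$ giving $c_j=O(1)$) supplies exactly the bookkeeping the paper leaves implicit.
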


\subsection{Asymptotic expansion of the regularized trace}
In order to define the analytic torsion, we need to know that the regularized trace $\Tr_{reg} e^{-t A_\nu}$ admits certain asymptotic expansion as $t \to +0$. 
For this we need the following lemmas:
\begin{Lemma}\label{pinkyellow}
Let $\phi(t) := \int_{\R} \frac{e^{-t \lambda^2}}{\lambda + c}\; d \lambda$, where $c \neq 0$. Then there exist $a'_j \in \C$ such that 
$$ \phi(t) \sim \sum_{j=0}^\infty a'_j t^{j/2}.$$
\end{Lemma}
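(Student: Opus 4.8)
The plan is to reduce the small-$t$ behaviour of $\phi(t)$ to the behaviour near the origin of a single explicit one-variable function, and then to read off the expansion from the analyticity of that function. First I would rescale by setting $\lambda = s/\sqrt{t}$, which turns the Gaussian into $e^{-s^2}$ and produces
\begin{equation*}
\phi(t) = \int_\R \frac{e^{-s^2}}{s + c\sqrt{t}}\, ds = F\big(c\sqrt{t}\big), \qquad F(u) := \int_\R \frac{e^{-s^2}}{s+u}\, ds.
\end{equation*}
If $c$ is real the integral is understood as a principal value; in the application of this lemma $c$ is purely imaginary and nonzero, so no such convention is needed and $F$ is given by an absolutely convergent integral with $u$ lying in a fixed open half-plane ($\mathrm{Im}\,u>0$ or $\mathrm{Im}\,u<0$). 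Thus it suffices to show that $F(u)$ admits an asymptotic power-series expansion $F(u) \sim \sum_{j\ge 0} f_j u^j$ as $u\to 0$ along the relevant ray; substituting $u = c\sqrt{t}$ then yields $\phi(t)\sim\sum_{j\ge 0} f_j c^j\, t^{j/2}$, so that setting $a'_j := f_j c^j$ proves the claim.

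The heart of the matter is therefore the analyticity of $F$ at $u = 0$. Here I would use that $F$ is, up to elementary entire factors, a Dawson/Faddeeva-type function: for $u$ in the upper (resp. lower) half-plane one has an identity of the form $F(u) = \pm\, i\pi\, e^{-u^2} + 2\sqrt{\pi}\, D(u)$, where $D(u) = e^{-u^2}\int_0^u e^{r^2}\, dr$ is the Dawson integral, which is entire and odd, and the sign is dictated by the half-plane. Since both $e^{-u^2}$ and $D(u)$ are entire, within each half-plane $F$ is the restriction of an entire function, so it possesses a convergent Taylor expansion $\sum_{j\ge 0} f_j u^j$ about $u=0$ whose coefficients govern the one-sided limits along any ray into that half-plane. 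In the case at hand $u = c\sqrt{t}$ runs along a fixed imaginary half-ray as $t\to +0$, so we remain in a single half-plane and the expansion applies directly.

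If one prefers to avoid invoking the named special function, the same analyticity can be obtained by hand by splitting $\R = \{|s| < \delta\} \cup \{|s| \ge \delta\}$ for a fixed small $\delta > 0$. On the outer piece one expands $\tfrac{1}{s+u} = \sum_{k\ge 0} (-u)^k s^{-k-1}$, which converges uniformly for $|u| < \delta$ and may be integrated term by term against the rapidly decaying $e^{-s^2}$, giving an honest power series in $u$. On the inner piece one expands $e^{-s^2}$ into its Taylor polynomial plus a controlled remainder and performs the polynomial division $s^{k}/(s+u)$; the resulting elementary integrals are analytic in $u$ for $|u| < \delta$, the only genuinely non-polynomial ingredient being a term of the form $u^{k}\log\frac{\delta+u}{\delta-u}$, which is itself analytic at the origin. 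Summing the two pieces exhibits $F$ as admitting a Taylor expansion in $u$ at $0$.

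The main obstacle is exactly the control of the integrand near the collision point $s = -u$, which tends to the origin as $u \to 0$; this is the only place where the crude geometric expansion of $\tfrac{1}{s+u}$ breaks down and where one must either invoke the explicit special-function formula or isolate the neighbourhood $|s| < \delta$ and integrate the singular factor exactly. Once this local analysis is in place, the passage back through $u = c\sqrt{t}$ is immediate and produces the asserted expansion in half-integer powers of $t$.
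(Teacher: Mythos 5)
Your argument is correct, but it takes a genuinely different route from the paper. The paper never rescales: it symmetrizes the integrand by writing $\tfrac{1}{\lambda+c}=\tfrac{\lambda}{\lambda^2-c^2}-\tfrac{c}{\lambda^2-c^2}$ (the odd part integrates to zero), then differentiates the remaining integral with respect to $t$ under the integral sign, which cancels the denominator and leaves the explicit Gaussian value $-\sqrt{\pi}\,t^{-1/2}e^{tc^2}$; integrating back in $t$ expresses $\phi$ as $e^{-tc^2}$ times a constant plus an error-function-type entire function of $\sqrt{t}$, whose power series gives the expansion (this computation also records a correction to \cite[Lemma 6.6]{MP111}). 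You instead remove $t$ entirely by the substitution $\lambda=s/\sqrt{t}$, reducing everything to the single function $F(u)=\int_\R e^{-s^2}(s+u)^{-1}\,ds$ evaluated at $u=c\sqrt{t}$, and then invoke (or reprove by your splitting argument) the fact that $F$, restricted to either half-plane, is the restriction of an entire function via the Faddeeva/Dawson identity $F(u)=\pm i\pi e^{-u^2}+2\sqrt{\pi}D(u)$. Both proofs land in the same place --- $\phi$ is an entire function of $\sqrt{t}$ up to entire factors, so the expansion is in fact convergent --- but yours has some genuine advantages: the scaling makes it transparent why half-integer powers of $t$ and no logarithms appear (contrast Lemma \ref{Lemas2}); it makes explicit the dependence of the coefficients on which half-plane $c$ lies in (the $\pm i\pi$ ambiguity); and it cleanly addresses the hypothesis that the paper leaves implicit, namely that $c\notin\R$ (or that a principal value is intended --- in the paper's application, via Lemma \ref{22:00}, $c$ is indeed purely imaginary after factoring $ie_j$ out of $ie_j z+d_j$), with the principal-value case even covered since the PV is $2\sqrt{\pi}D(u)$. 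The paper's computation is shorter; your elementary splitting variant should be tightened at one point if you use it (the Taylor remainder $R_N$ of $e^{-s^2}$ still meets the pole at $s=-u$, so one should subtract $R_N(-u)$ or sum the full convergent series), but your primary special-function argument needs no repair.
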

\begin{proof}
Note that 
\begin{equation*}
\int_{\R} \frac{e^{-t \lambda^2}}{\lambda + c} d \lambda = \int_{\R} \frac{e^{-t \lambda^2} \lambda}{\lambda^2 - c^2} d \lambda - c \int_{\R} \frac{e^{-t \lambda^2}}{\lambda^2 - c^2} d \lambda = -c e^{-t c^2} \int_{\R} \frac{e^{-t \lambda^2+t c^2}}{\lambda^2 - c^2} d \lambda.
\end{equation*}
and (correcting a mistake in \cite[Lemma 6.6]{MP111}), $$\frac{d}{dt} \int_{\R} \frac{e^{-t \lambda^2+t c^2}}{\lambda^2 - c^2} d \lambda = - \frac{\sqrt{\pi}}{\sqrt{t}} e^{t c^2}.$$
It follows from the previous equation that
\begin{equation*}
\quad \int_{\R} \frac{e^{-t \lambda^2+t c^2}}{\lambda^2 - c^2} d \lambda = - C_1 \cdot \textnormal{erfc}(\sqrt{t})+C_2
\end{equation*}
for some $C_1$ and $C_2$. Expanding $\textnormal{erfc}(\sqrt{t})$ in power series implies Lemma~\ref{pinkyellow}. 
\end{proof}
\begin{Lemma}\label{Lemas2}
Let $\phi_2(t):=\int_{\mathbb{R}}{e^{-t\lambda^2}\psi(a+i\lambda)d\lambda}$, where $a\in(0, 1]$ and $\psi$ is the digamma function.
Then
there exist $a'_j$, $b'_j$, $c'_j \in \C$ such that as $t\to 0$,
there is an asymptotic expansion 
\begin{align*}
\phi_2(t)\sim \sum_{j=0}^\infty a'_jt^{j-1/2}+\sum_{j=0}^\infty b'_j t^{j-1/2}\log{t}
+\sum_{j=0}^\infty c'_jt^{j}.
\end{align*}
\end{Lemma}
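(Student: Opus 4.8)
The plan is to separate the three families of terms in the expansion according to their origin in the large-$\lambda$ behaviour of the digamma function. Recall the classical asymptotic expansion
$$\psi(z)\sim\log z-\frac{1}{2z}-\sum_{n\ge1}\frac{B_{2n}}{2n}\,z^{-2n},\qquad |z|\to\infty,\ |\arg z|<\pi,$$
which holds uniformly in the half-plane $\RRe z\ge a>0$. Fix a large integer $N$ and write, for all $\lambda\in\R$,
$$\psi(a+i\lambda)=\log(a+i\lambda)-\frac{1}{2(a+i\lambda)}-\sum_{n=1}^{N}\frac{B_{2n}}{2n}(a+i\lambda)^{-2n}+h_N(\lambda),$$
where each summand is a smooth function of $\lambda$ (since $a>0$ forces $a+i\lambda\neq0$ and keeps $a+i\lambda$ off the branch cut) and the remainder satisfies $|h_N(\lambda)|\le C_N(1+|\lambda|)^{-2N-2}$. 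Inserting this into $\phi_2$ and using linearity reduces the problem to computing the Gaussian transform $\int_\R e^{-t\lambda^2}(\,\cdot\,)\,d\lambda$ of each of three kinds of pieces: the logarithm, the inverse powers $(a+i\lambda)^{-k}$, and the fast-decaying remainder $h_N$.

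The inverse-power pieces and the remainder produce no logarithms. For the remainder, since $h_N\in L^1(\R)$ together with its moments up to order $2N$, I would expand $e^{-t\lambda^2}=\sum_{j<J}(-t\lambda^2)^j/j!+(\text{Taylor remainder})$ and integrate termwise to get $\int_\R e^{-t\lambda^2}h_N(\lambda)\,d\lambda=\sum_{j=0}^{J-1}\frac{(-t)^j}{j!}\int_\R\lambda^{2j}h_N(\lambda)\,d\lambda+O(t^{J})$ for any $J\le N$, i.e.\ a pure integer-power contribution feeding into the $c'_j$. For the inverse powers I would invoke Lemma~\ref{pinkyellow}: writing $(a+i\lambda)^{-1}=-i(\lambda-ia)^{-1}$ with $c=-ia\neq0$, that lemma gives $\int_\R e^{-t\lambda^2}(a+i\lambda)^{-1}\,d\lambda\sim\sum_j a'_j t^{j/2}$, a mixture of half-integer and integer powers but no $\log t$. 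The higher powers follow by differentiating in the parameter $a$, using $(a+i\lambda)^{-k}=\frac{(-1)^{k-1}}{(k-1)!}\partial_a^{k-1}(a+i\lambda)^{-1}$, so that $\int_\R e^{-t\lambda^2}(a+i\lambda)^{-k}\,d\lambda\sim\sum_j(\partial_a^{k-1}a'_j)\,t^{j/2}$ is again of the form $\sum_j t^{j/2}$.

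The single logarithmic family comes entirely from the $\log(a+i\lambda)$ piece. Here I would rescale $u=\sqrt t\,\lambda$ and use $\log(a+i\lambda)=-\tfrac12\log t+\log(iu+a\sqrt t)$ to obtain
$$\int_\R e^{-t\lambda^2}\log(a+i\lambda)\,d\lambda=-\frac{\sqrt\pi}{2}\,t^{-1/2}\log t+t^{-1/2}\int_\R e^{-u^2}\log(iu+a\sqrt t)\,du.$$
The key point is that $F(s):=\int_\R e^{-u^2}\log(iu+s)\,du$ is smooth at $s=0^+$: its derivative is $F'(s)=\int_\R e^{-u^2}(iu+s)^{-1}\,du=\pi e^{s^2}\,\textnormal{erfc}(s)$, which is real-analytic for $s\ge0$, so $F(a\sqrt t)=\sum_j f_j\,t^{j/2}$ contributes only half-integer and integer powers after multiplication by $t^{-1/2}$. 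Thus the logarithm yields exactly the term $-\tfrac{\sqrt\pi}{2}t^{-1/2}\log t$ (giving $b'_0$, while $b'_j=0$ for $j\ge1$) together with further $\sum_j t^{(j-1)/2}$ terms. Assembling the three contributions and letting $N\to\infty$ (each fixed order $t^{P}$ being reached once $N>2P$) collects the half-integer powers into $\sum a'_j t^{j-1/2}$, the single log term into $\sum b'_j t^{j-1/2}\log t$, and the integer powers into $\sum c'_j t^{j}$, which is the claimed expansion. I expect the main obstacle to be the rigorous bookkeeping of the logarithmic term: justifying the termwise differentiation in $a$ of the expansion from Lemma~\ref{pinkyellow} and the smoothness of $F$ at $s=0^+$ (equivalently, identifying $F'$ with the entire-type function $\pi e^{s^2}\textnormal{erfc}(s)$), together with the uniformity needed to let $N\to\infty$, are the points that require care; the remaining estimates are routine.
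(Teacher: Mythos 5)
Your proof is correct, and the individual steps hold up under scrutiny. Note, however, that the paper itself contains no argument for this lemma: its entire proof is the single line ``Follows from the proof of \cite[Lemma 6.7]{MP} with minor modifications,'' so your write-up is a self-contained substitute rather than a parallel to anything in the text; the route you take (Stirling expansion of $\psi$, then Gaussian transform of each piece) is the natural one and is presumably close to what the cited lemma does. Checking your steps: since $\RRe(a+i\lambda)=a>0$, the argument of $a+i\lambda$ stays in $(-\pi/2,\pi/2)$, so the Stirling series applies uniformly on the line with remainder $O\bigl((1+|\lambda|)^{-2N-2}\bigr)$; the closed form $\int_{\R}e^{-t\lambda^2}(a+i\lambda)^{-1}\,d\lambda=\pi e^{a^2t}\operatorname{erfc}(a\sqrt{t})$, an entire function of $\sqrt{t}$ depending analytically on $a$, legitimizes both the $k=1$ case and the termwise $\partial_a$-differentiation for the higher inverse powers; the branch identity $\log(a+i\lambda)=\log(iu+a\sqrt{t})-\tfrac{1}{2}\log t$ is valid because both arguments have positive real part; $F(s)=\int_{\R}e^{-u^2}\log(iu+s)\,du$ is indeed real-analytic at $s=0^{+}$ because $F'(s)=\pi e^{s^2}\operatorname{erfc}(s)$ extends to an entire function; and the moment bounds $\int_{\R}\lambda^{2J}|h_N(\lambda)|\,d\lambda<\infty$ for $J\le N$ give the pure integer-power contribution of the remainder. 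Two remarks: your argument proves slightly more than the lemma states, namely that only $b'_0\neq 0$ (the single logarithmic term $-\tfrac{\sqrt{\pi}}{2}\,t^{-1/2}\log t$), which is consistent with the statement since it only asserts existence of coefficients; and the $N\to\infty$ bookkeeping at the end is harmless by uniqueness of asymptotic expansions, since the coefficients extracted at any fixed order are independent of the choice of $N$ once $N$ is large enough.
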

\begin{proof}
Follows from the proof of \cite[Lemma 6.7]{MP}
with minor modifications.
\end{proof}
\begin{Corollary}\label{cor3.10}
Let $\phi_3(t):=\int_{\mathbb{R}}{e^{-t\lambda^2}\psi(a+i b \lambda)d\lambda}$, where $a, b \in \R$  and $\psi$ is the digamma function.
Then
there exist $a'_j$, $b'_j$, $c'_j \in \C$ such that as $t\to 0$,
there is an asymptotic expansion 
\begin{align*}
\phi_2(t)\sim \sum_{j=0}^\infty a'_jt^{j-1/2}+\sum_{j=0}^\infty b'_j t^{j-1/2}\log{t}
+\sum_{j=0}^\infty c'_jt^{j}.
\end{align*}
\end{Corollary}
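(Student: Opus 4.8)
The plan is to reduce Corollary~\ref{cor3.10} to the two cases already established, namely Lemma~\ref{Lemas2} (digamma argument $a+i\lambda$ with $a\in(0,1]$) and Lemma~\ref{pinkyellow} (the rational kernels). Two features separate the present statement from Lemma~\ref{Lemas2}: the factor $b$ multiplying $\lambda$, and the fact that $a$ is now an arbitrary real number rather than being confined to $(0,1]$. I would remove the first by rescaling the integration variable and the second by the functional equation of $\psi$, after which only Lemmas~\ref{Lemas2} and~\ref{pinkyellow} are invoked.

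First I would dispose of the trivial case $b=0$, where $\phi_3(t)=\psi(a)\int_{\R}e^{-t\lambda^2}\,d\lambda=\psi(a)\sqrt{\pi}\,t^{-1/2}$ already has the asserted form. Assume now $b\neq0$ and that $a$ is not a non-positive integer, so that $\psi(a+ib\lambda)$ is regular on $\R$. Choose the integer $N$ for which $a':=a+N\in(0,1]$; iterating the functional equation $\psi(z+1)=\psi(z)+1/z$ gives
\begin{equation*}
\psi(a+ib\lambda)=\psi(a'+ib\lambda)+\sum_{k}\frac{\varepsilon_k}{c_k+ib\lambda},
\end{equation*}
a finite sum with signs $\varepsilon_k\in\{\pm1\}$ and nonzero real constants $c_k$ (all nonzero precisely because $a$ is not a non-positive integer). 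Inserting this into the definition of $\phi_3$ splits it into one ``digamma'' integral and finitely many ``rational'' integrals.

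For the rational integrals I would write $\tfrac{1}{c_k+ib\lambda}=\tfrac{1}{ib}\cdot\tfrac{1}{\lambda-ic_k/b}$ and apply Lemma~\ref{pinkyellow} with the nonzero constant $-ic_k/b$; this contributes an expansion in powers $t^{j/2}$, $j\ge0$, which is of the required shape. For the digamma integral I would substitute $\mu=b\lambda$, obtaining $\int_{\R}e^{-t\lambda^2}\psi(a'+ib\lambda)\,d\lambda=|b|^{-1}\phi_2(t/b^2)$ with $\phi_2$ as in Lemma~\ref{Lemas2} (recall $a'\in(0,1]$). Substituting $s=t/b^2$ into the expansion of Lemma~\ref{Lemas2} and using $s^{j-1/2}=|b|^{-(2j-1)}t^{j-1/2}$, $s^{j}=b^{-2j}t^{j}$ and $\log s=\log t-\log b^2$ turns each summand back into a summand of the same type: in particular a term $s^{j-1/2}\log s$ becomes a $\log t$ term in $t$ plus an ordinary power $t^{j-1/2}$, so the three admissible families of exponents are preserved. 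Summing the finitely many contributions yields the claimed expansion for $\phi_3$.

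The only point requiring genuine care is this final bookkeeping---checking that the rescaling $t\mapsto t/b^2$ produces no exponents outside the three admissible families and that the $\log t$ terms remain $\log t$ terms---together with the degenerate situation where $a$ is a non-positive integer. The latter does not arise in the application, where the relevant combination is regular at $\lambda=0$, and in any case can be handled by passing to a principal value; so I expect no serious obstacle beyond organizing the reduction cleanly.
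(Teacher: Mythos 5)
Your proposal is correct and takes essentially the same route as the paper's proof, which simply reduces to $b=1$ by rescaling and then invokes Lemmas \ref{pinkyellow} and \ref{Lemas2} together with the functional equation $\psi(z+1)=\psi(z)+\tfrac{1}{z}$. Your version only differs in spelling out the details the paper leaves implicit: the bookkeeping of exponents under $t\mapsto t/b^{2}$, the application of Lemma \ref{pinkyellow} with the purely imaginary constant $-ic_k/b$, and the degenerate cases $b=0$ and $a$ a non-positive integer.
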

\begin{proof}
Without loss of generosity we can assume that $b>0$ and, moreover, $b=1$. Then Corollary \ref{cor3.10} follows from Lemmas \ref{pinkyellow} and \ref{Lemas2}, taking into account that  $\phi(z+1) = \phi(z)+\frac1z$.
\end{proof}
The main result of the subsection is the following proposition:
\begin{Proposition}
\label{asympbaby}
There exist coefficients $a'_j, b'_j, c'_j$, where $j \in \N$ such that
$$ \Tr_{reg} (e^{-t A_\nu}) \sim \sum_{j=0}^{\infty} a_j t^{j-d/2} + 
\sum_{j=0}^\infty b'_j t^{j-1/2} \log t + \sum_{j=0}^\infty c'_j t^j,$$
as $t \to +0$, where $\textnormal{Tr}_{reg}\left(e^{-tA_\nu}\right)$ is from Theorem \ref{regtrace2}.
\end{Proposition}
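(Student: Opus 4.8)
The plan is to combine Theorem \ref{whattheyknow} with the Fourier-transform computations of Section \ref{woint} and the scalar-integral asymptotics of Corollary \ref{cor3.10}, treating each summand of the trace formula separately. First I would invoke Theorem \ref{whattheyknow} to write
$$
\Tr_{reg}(e^{-tA_\nu}) = I(h_t^\nu) + H(h_t^\nu) + T(h_t^\nu) + \mathcal{I}(h_t^\nu) + J(h_t^\nu) + E^{cusp}(h_t^\nu) + \mathcal{J}^{cusp}(h_t^\nu),
$$
and establish an expansion of the required shape for each term. Since the collection of admissible power sequences $\{t^{j-d/2},\ t^{j-1/2}\log t,\ t^{j},\ t^{j-1/2}\}$ is closed under addition and under multiplication by an entire series, summing the individual expansions yields the proposition; the half-integer non-logarithmic powers coming from the cuspidal terms get absorbed into $\sum_j a_j t^{j-d/2}$ because $d=2n+1$ is odd, so that $t^{j-1/2}=t^{(j+n)-d/2}$ with $j+n\ge 0$.

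For the terms already present in the neat case, namely $I, H, T, \mathcal{I}, J$, I would argue as in \cite{MP, MP111}. The identity contribution $I(h_t^\nu)$ is the Plancherel integral of $h_t^\nu$ over $\HH^{2n+1}$ and produces the local heat expansion $\sum_j a_j t^{j-d/2}$; the hyperbolic contribution $H(h_t^\nu)$ carries a Gaussian factor $e^{-\ell(\gamma)^2/(4t)}$ for each length $\ell(\gamma)>0$ and is therefore exponentially small as $t\to+0$, so it contributes only negligibly to $\sum_j c_j' t^j$; and $T, \mathcal{I}, J$ give the half-integer, logarithmic and integer power series by the digamma analysis already recorded in \cite{MP}.

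The genuinely new input concerns the cuspidal terms $E^{cusp}(h_t^\nu)$ and $\mathcal{J}^{cusp}(h_t^\nu)$. By Theorem \ref{FTorbint} I would write
$$
E^{cusp}(h_t^\nu)=\sum_{\sigma\in\hat M}\int_{\R}\Omega^{cusp}(\check{\sigma},-\lambda)\,\Theta_{\sigma,\lambda}(h_t^\nu)\,d\lambda,
$$
the sum being finite because $[\nu:\sigma]\neq 0$ for only finitely many $\sigma\in\hat M$. Evaluating the global character of the principal series gives $\Theta_{\sigma,\lambda}(h_t^\nu)=[\nu:\sigma]\,e^{-t(\lambda^2-c(\sigma))}$, with $c(\sigma)$ as in Theorem \ref{whattheyknow}; pulling out the entire factor $e^{tc(\sigma)}$ and inserting the finite sum of digamma functions $\Omega^{cusp}(\check{\sigma},-\lambda)=\sum_{j} c_j\,\psi(a_j+b_j i\lambda)$, each summand becomes, up to the even change of variable $\lambda\mapsto-\lambda$, an integral of the form $\phi_3(t)$ of Corollary \ref{cor3.10}. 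Hence each has an expansion $\sum_j a_j' t^{j-1/2}+\sum_j b_j' t^{j-1/2}\log t+\sum_j c_j' t^{j}$, and multiplying by $e^{tc(\sigma)}=\sum_k (tc(\sigma))^k/k!$ only shifts powers upward and preserves this shape. For $\mathcal{J}^{cusp}(h_t^\nu)$ I would first use Remark \ref{apocalyptiss} to reduce it to a finite combination of the identity weighted orbital integrals $-I_L(1,\cdot)+J_L(1,\cdot)$, whose Fourier transform is again of digamma type by Theorem \ref{fables}, and then apply Corollary \ref{cor3.10} verbatim.

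The main obstacle is the explicit evaluation of $\Theta_{\sigma,\lambda}(h_t^\nu)$ together with the bookkeeping needed to see that, after substitution into the Fourier-transform formulas of Section \ref{woint}, every resulting scalar integral is \emph{literally} of the form treated in Lemma \ref{pinkyellow}, Lemma \ref{Lemas2} and Corollary \ref{cor3.10}. Once this identification is in place, and once one has justified the finiteness of the $\sigma$-sum and the interchange of summation and integration (which follows from the trace-class estimates underlying Theorem \ref{whattheyknow}), the expansion follows simply by assembling the pieces and collecting powers as described above.
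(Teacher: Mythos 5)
Your overall strategy coincides with the paper's proof: decompose $\Tr_{reg}(e^{-tA_\nu})$ via Theorem \ref{whattheyknow}, quote \cite{MP} (Proposition 6.9 there) for $I,H,T,\mathcal{I},J$, and obtain the expansion of $\mathcal{E}^{cusp}(h_t^\nu)$ from Theorem \ref{FTorbint} combined with Corollary \ref{cor3.10}. However, your treatment of $\mathcal{J}^{cusp}$ contains a genuine flaw. After correctly using Remark \ref{apocalyptiss} to reduce $\mathcal{J}^{cusp}(h_t^\nu)$ to a finite combination of $-I_L(1,h_t^\nu)+J_L(1,h_t^\nu)$, you assert that the Fourier transform of this difference ``is again of digamma type by Theorem \ref{fables}.'' That theorem does not apply here, for two separate reasons: (i) it computes the Fourier transform of the \emph{invariant} distribution $I_L(\gamma,\cdot)$ for \emph{regular} elliptic $\gamma$, and the identity element is not regular (its centralizer is all of $G$, not $\SO_0(1,1)\times\SO(2)^{n-1}$); (ii) even for regular $\gamma$ it says nothing about the difference $J_L-I_L$. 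By Proposition \ref{orbringherbacktome}, equation (\ref{aqwrer}), the quantity $J_L(1,\cdot)-I_L(1,\cdot)$ is the non-invariant correction term
\begin{equation*}
\frac{1}{2\pi i}\sum_{\sigma\in\widehat{M}}\int_{D_{\epsilon}}\Theta_{\breve{\sigma}_{-\lambda}}(1)\,
\Tr\Bigl(J_{\bar{P}_{0}|P_{0}}(\sigma,z)^{-1}\tfrac{d}{dz}J_{\bar{P}_{0}|P_{0}}(\sigma,z)\,\pi_{\sigma,z}(h_t^\nu)\Bigr)\,dz,
\end{equation*}
an integral of logarithmic derivatives of intertwining operators against principal series characters. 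This is precisely (up to constants) the distribution $J$ of \cite{MP}, and its small-time expansion must be obtained as it is there, from the explicit structure of the intertwining operators — which is exactly what the paper does by declaring that $\mathcal{J}^{cusp}(h_t^\nu)$ ``is treated in the same way as $J(h_t^\nu)$.'' Your appeal to Theorem \ref{fables} and a verbatim application of Corollary \ref{cor3.10} would not go through as written, although the correct argument is available and short.

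A secondary point: your decomposition (copied from the displayed formula in Theorem \ref{whattheyknow}) omits the elliptic contribution $E(h_t^\nu)$, which for an orbifold with torsion elements is present on the geometric side, cf.\ (\ref{trace7}) and the invariant trace formula of Section \ref{sectionaboutSTF}. This omission is evidently a misprint in the statement of Theorem \ref{whattheyknow}, but a complete proof must account for this term; the paper does so by treating $E(h_t^\nu)$ in the same way as the identity contribution $I(h_t^\nu)$, and it feeds into the coefficients of the expansion just as the identity term does.
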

\begin{proof}
By Theorem \ref{whattheyknow} it is sufficient to show that the 
summands on the right hand side of (\ref{regtrace2}) admit an asymptotic
expansion as $t~\to~+0$. The summands $I(h^{\nu}_{t}), H(h^{\nu}_{t}), T(h^{\nu}_{t}),\mathcal{I}(h^{\nu}_{t}),J(h^{\nu}_{t})$ were treated in \cite[Proposition 6.9]{MP}. The terms $\mathcal{J}^{cusp}(h_t^\nu)$ and $E(h_t^\nu)$ are treated in the same way as $J(h_t^\nu)$ and $I(h_t^\nu)$, respectively. The asymptotic expansion of 
$\mathcal{E}^{cusp}(h_t^\nu)$ follows from Corollary~\ref{cor3.10} and Theorem~\ref{FTorbint}.
\end{proof}

\section{Analytic torsion}\label{Beweis}
In this section we define the analytic torsion on finite volume orbifolds and prove Theorem \ref{adacupcakeprincess}.
Let $\Ob = \Gamma \bs \HH^{2n+1}$, let $(\rho, V_\rho)$ be a finite-dimensional representation of $\Gamma$ and let $E_\rho \to \Ob$ be the associated flat orbibundle. 

Let us specify to the case where $\rho = \tau|_\Gamma$ is the restriction to $\Gamma$ of a finite-dimensional irreducible representation $\tau$ of $G$. In this case $E_\rho$ can be equipped with a distinguished metric which is unique up to scaling. Namely, $E_\rho$ is canonically isomorphic to the locally homogeneous orbibundle~$E_\tau$ associated to $\tau|_K$  (by analogy with \cite[Proposition 3.1]{MaMu}). Moreover, there exists a unique up to scaling inner product~$\langle \cdot, \cdot \rangle$ on $V_\rho$ such that
\begin{enumerate}
\item $\langle \tau(Y) u, v \rangle = - \langle u, \tau(Y) v \rangle, \quad  Y \in \mathfrak{k}$,
\item $\langle \tau(Y) u, v \rangle =  \langle u, \tau(Y) v \rangle, \quad  Y \in \mathfrak{p}$,
\end{enumerate}
for all $u,v \in V_\rho$. Note that $\tau|_K$ is unitary with respect to this inner product, hence it induces a unique up to scaling metric~$h$ on $E_\tau$.
\begin{definition}
\label{admi}
Such a metric on $E_\tau$ is called admissible.
\end{definition}
From now on fix an admissible metric $h$. 
Let $\Delta_p(\tau)$ be the Hodge-Laplacian on $\Lambda^p(\Ob, E_\tau)$ with respect to $h$. 
\begin{Lemma}\cite[(6.9)]{MaMu}
\label{lemma_chat}
One has 
$$ \Delta_p(\tau)=-\Omega+\tau(\Omega) \Id$$
for $$\tau(\Omega)=\sum_{j=1}^{n+1}(k_j(\tau)+\rho_j)^2-\sum_{j=1}^{n+1} \rho_j^2,$$
where $k_1(\tau)e_1+\ldots+k_{n+1}(\tau)e_{n+1}$ is the highest weight of $\tau$ and $\rho_j$ is from (\ref{halfsummy}).
\end{Lemma}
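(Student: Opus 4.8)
The plan is to pass to the representation-theoretic model of $E_\tau$-valued forms and invoke a Kuga-type identity relating the Hodge--Laplacian to the Casimir operator, after which the stated formula for $\tau(\Omega)$ reduces to Freudenthal's eigenvalue formula. First I would use the canonical isomorphism $E_\rho \cong E_\tau$ and the admissible metric $h$ fixed above to identify
$$\Lambda^p(\Ob, E_\tau) \cong \big(C^\infty(\Gamma\bs G)\otimes \Lambda^p\mathfrak{p}^*\otimes V_\tau\big)^K,$$
the $K$-invariants for the action $\Lambda^p\mathrm{Ad}^*\otimes\tau|_K$ on the last two factors together with the right regular action on the first. Under this identification the de Rham differential and its adjoint become algebraic operators on $\Lambda^*\mathfrak{p}^*$ twisted by the $\mathfrak{p}$-part of the exterior derivative, exactly as in the derivation leading to \cite[(6.9)]{MaMu}.

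Next I would apply the Bochner--Kuga identity. The point is that conditions (1)--(2) defining the admissible metric make $\tau|_K$ unitary and force the flat connection on $E_\rho$ to differ from the canonical connection on the homogeneous bundle $E_\tau$ only by the symmetric endomorphism $\tau|_{\mathfrak{p}}$ (this is the content of the isomorphism analogous to \cite[Proposition 3.1]{MaMu}). Computing $dd^* + d^*d$ in the model then reduces, via the standard Weitzenb\"ock computation on a symmetric space, where $[\mathfrak{p},\mathfrak{p}]\subset\mathfrak{k}$ ensures that the curvature correction collapses to a scalar, to
$$\Delta_p(\tau) = -R(\Omega) + \tau(\Omega)\,\Id,$$
where $R(\Omega)$ is the Casimir acting through the right regular representation (this is precisely the operator denoted $-\Omega$ in the statement, i.e.\ the operator $A_\nu$ of Definition \ref{yanenavizhu} for $\nu = \Lambda^p\mathrm{Ad}^*\otimes\tau|_K$), and $\tau(\Omega)$ is the scalar by which $\Omega$ acts on the irreducible $V_\tau$. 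Verifying that the curvature correction is exactly $\tau(\Omega)\,\Id$ is the step I expect to be the main obstacle, since it is where the admissible metric, the splitting $\mathfrak{g}=\mathfrak{k}\oplus\mathfrak{p}$, and the cancellation of the mixed connection terms all enter.

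Finally I would evaluate $\tau(\Omega)$ by Freudenthal's formula. For the irreducible $\tau$ with highest weight $\Lambda = \sum_{j=1}^{n+1}k_j(\tau)e_j$ one has $\tau(\Omega) = \langle \Lambda + \rho_G, \Lambda + \rho_G\rangle - \langle\rho_G,\rho_G\rangle$, where $\rho_G$ is the half-sum of the positive roots in $\Delta^+(\mathfrak{g}_\C,\mathfrak{h}_\C)$. A direct count using the roots $\{e_i\pm e_j : 1\le i<j\le n+1\}$ shows that each $e_i$ occurs with net multiplicity $2(n+1-i)$, so that $\rho_G = \sum_{j=1}^{n+1}(n+1-j)e_j = \sum_{j=1}^{n+1}\rho_j e_j$ with $\rho_j = n+1-j$ as in (\ref{halfsummy}). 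Since the $e_j$ are orthonormal for the inner product normalizing $\Omega$ fixed by the conventions of Subsection \ref{liealg}, this yields exactly
$$\tau(\Omega) = \sum_{j=1}^{n+1}(k_j(\tau)+\rho_j)^2 - \sum_{j=1}^{n+1}\rho_j^2,$$
completing the proof. The only remaining care is to check that the normalization of the invariant metric used to define $\Omega$ is indeed the one making $\{e_j\}$ orthonormal, a bookkeeping point rather than a genuine difficulty.
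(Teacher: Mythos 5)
Your proposal is correct and is essentially the paper's approach: the paper establishes this lemma purely by citing \cite[(6.9)]{MaMu}, i.e.\ Kuga's lemma, and your reconstruction --- the identification of $\Lambda^p(\Ob,E_\tau)$ with $\left(C^\infty(\Gamma\bs G)\otimes\Lambda^p\mathfrak{p}^*\otimes V_\tau\right)^K$, the Kuga/Weitzenb\"ock identity $\Delta_p(\tau)=-R(\Omega)+\tau(\Omega)\,\Id$ enabled by the admissibility conditions (1)--(2), and the Casimir eigenvalue formula with $\rho_G=\sum_{j=1}^{n+1}(n+1-j)e_j$ --- is exactly the standard derivation underlying that citation. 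Your closing normalization check is also right: with the conventions of Subsection \ref{liealg} the $e_j$ are orthonormal for the invariant form defining $\Omega$, so the eigenvalue is $\sum_{j=1}^{n+1}(k_j(\tau)+\rho_j)^2-\sum_{j=1}^{n+1}\rho_j^2$ as stated.
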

\begin{remark}
Compare with the differential operator $A_\nu$ from Definition \ref{yanenavizhu}.
\end{remark}
In order to define the spectral zeta function we need to study the asymptotic behavior of $\Tr_{reg} e^{-t\Delta_p(\tau)}$ as $t \to 0$ and $t \to \infty$. It follows from Lemma \ref{lemma_chat} and Proposition \ref{asympbaby} that  there exist coefficients $a'_j, b'_j, c'_j$, $j \in \N$ such that
\begin{equation}
\label{assnumberone}
 \Tr_{reg} (e^{-t\Delta_p(\tau)}) \sim \sum_{j=0}^{\infty} a_j t^{j-d/2} + 
\sum_{j=0}^\infty b'_j t^{j-1/2} \log t + \sum_{j=0}^\infty c'_j t^j,
\end{equation}
as $t \to +0$. As in \cite[(7.10)]{MP} it follows from (\ref{regtrace2})  that
\begin{equation}
\label{assnumbertwo}
\Tr_{reg}\left(e^{-t\Delta_p(\tau)}\right)\sim h_p(\tau)+\sum_{j=1}^\infty c_j 
t^{-j/2},\quad t\to\infty,
\end{equation}
where $h_p(\tau) = \dim(\ker \Delta_p(\tau) \cap L^2)$.
\begin{definition}
The spectral zeta function is defined as:
$$ \zeta_p(s;\tau):=\frac{1}{\Gamma(s)} \int_0^1+\int_1^\infty t^{s-1} \Tr_{reg} \left( e^{-t\Delta_p(\tau)} - h_p(\tau)\right) dt.$$
\end{definition}
By (\ref{assnumberone}) and (\ref{assnumbertwo}) both integrals admit meromorphic continuation to~$\C$ that is regular at $s=0$ and hence we can define:
\begin{definition}
\label{defineanal}
The analytic torsion $T_\Ob(\tau)$ associated with a flat vector bundle $E_\tau$ equipped with the admissible metric from Definition \ref{admi}, is defined as
$$ T_\Ob(\tau)=\prod_{p=0}^{2n+1} \exp\left. \left( -\frac{d}{ds} \zeta_p(s;\tau)\right|_{s=0} \right)^{(-1)^{p+1} \cdot p/2}.$$
\end{definition}
Let
$$ K(t,\tau):=\sum_{p=0}^p (-1)^{p} \, p \, \Tr_{reg} e^{-t \Delta_p(\tau)}$$
Note that if $h_p(\tau)=0$, the analytic torsion is given by:
\begin{equation}
\label{expressyourself}
\log{T_{\Ob}(\tau)}=\frac{1}{2}\frac{d}{ds}\biggr|_{s=0}\left(\frac{1}{\Gamma(s)}
\int_{0}^{\infty}t^{s-1}K(t,\tau)\,dt\right).
\end{equation}
\begin{remark}
If $\tau=\tau(m)$, then $h_p(\tau(m))=0$ for sufficiently large $m$ \cite[Lemma 7.3]{MP}. 
\end{remark}
Let $\w{E}_{\nu_p(\tau)} := G \times_{\nu_p(\tau)} \Lambda^p \mathfrak{p}^* \otimes V_\tau$, where 
$$\nu_p(\tau):=\Lambda^p \Ad^* \otimes \tau: K \mapsto GL(\Lambda^p \mathfrak{p}^* \otimes V_\tau)$$ and let $\w{\Delta}_p(\tau)$ be the lift of $\Delta_p(\tau)$ to $C^\infty(\HH^{2n+1}, \w{E}_{\nu_p(\tau)})$. Denote by $H_t^{\tau,p}:G \mapsto \End(\Lambda^p \mathfrak{p}^* \otimes V_\tau)$ the convolution kernel of $e^{-t \w{\Delta}_p(\tau)}$ as in \cite[p.~16]{MP111}. Let 
\begin{equation}\label{ananas}
 h_t^{\tau,p}(g):=\tr\, H_t^{\tau,p}(g),
\end{equation}
where $\tr$ denotes the trace in $\End(V_\nu)$. 
Put
$$ k_t^\tau(g) := e^{-t \tau(\Omega)}\sum_{p=1}^{2n+1} (-1)^p  \, p \, h_t^{\tau,p}(g).$$
It follows from Theorem \ref{whattheyknow} that 
\begin{equation}
\label{sailormoonfight}
 K(t,\tau)=\left(I+H+T+\mathcal{I}+J+E+\mathcal{E}^{cusp}+\mathcal{J}^{cusp} \right)(k_t^\tau).
\end{equation}
By (\ref{expressyourself}) and (\ref{sailormoonfight}) in order to study the analytic torsion $T_{\Ob}(\tau)$ we need to study the Mellin transform of the right hand side of (\ref{sailormoonfight}) at zero. First we express $k_t^\tau(g)$ in a more convenient way:
\begin{Proposition}\cite[Proposition 8.2, (8.13)]{MP}
\label{easyeasygogo}
For $k=0,\ldots,n$ let 
\begin{equation}
\label{hatehate}
 \lambda_{\tau,k}=\tau_{k+1}+n-k,
\end{equation}
 $\sigma_{\tau,k}$ be as in Definition \ref{dertttt}, and $h_t^{\sigma_{\tau,k}}$ be as in \cite[(8.8)]{MP}. Then 
\begin{equation}
k_t^\tau=\sum_{k=0}^{n}(-1)^{k+1}e^{-t\lambda_{\tau,k}^2}h_t^{\sigma_{\tau,k}}.
\end{equation} 
For a principal series 
representation $\pi_{\sigma',\lambda}$, $\lambda\in\mathbb{R}$, the Fourier transform is:
\begin{align}\label{FTSup}
\Theta_{\sigma',\lambda}(h^{\sigma}_{t})=e^{-t\lambda^{2}} \quad
\text{for $\sigma'\in\{\sigma, w_{0}\sigma\}$};\qquad
\Theta_{\sigma',\lambda}(h^{\sigma}_{t})=0, \quad\text{otherwise}. 
\end{align}
\end{Proposition}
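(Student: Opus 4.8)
The statement combines two facts: the Fourier transform identity (\ref{FTSup}) for the building blocks $h_t^\sigma$, and the reduction of the weighted alternating form-sum $k_t^\tau$ to these blocks with the exponents $\lambda_{\tau,k}^2$ of (\ref{hatehate}). I would establish (\ref{FTSup}) first. The kernel $h_t^\sigma=\tr H_t^\sigma$ is, by its construction in the cited reference, the local trace of a heat semigroup attached to $\sigma\in\hat M$ and normalized so that on the principal series $\pi_{\sigma,\lambda}$ the underlying operator has spectral parameter $\lambda^2$; hence its Harish-Chandra–Fourier transform is governed by the Plancherel theorem. Since $\Omega$ acts as a scalar on each unitary principal series, $\Theta_{\sigma',\lambda}(h_t^\sigma)=\tr\,\pi_{\sigma',\lambda}(h_t^\sigma)$ reduces, by Frobenius reciprocity, to $e^{-t\lambda^2}$ times the multiplicity of $\sigma'$ in the restriction to $M$ of the defining $K$-type. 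This multiplicity equals $1$ exactly for $\sigma'\in\{\sigma,w_0\sigma\}$ — the appearance of both reflecting the equivalence $\pi_{\sigma,\lambda}\cong\pi_{w_0\sigma,-\lambda}$ under $w_0\in W(A)$ (Subsection \ref{liealg}) together with the evenness of $e^{-t\lambda^2}$ in $\lambda$ — and $0$ otherwise, which is precisely (\ref{FTSup}).

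For the reduction formula I would begin from Kuga's lemma (Lemma \ref{lemma_chat}), $\Delta_p(\tau)=-\Omega+\tau(\Omega)\Id$, which shows that the scalar prefactor $e^{-t\tau(\Omega)}$ in the definition of $k_t^\tau$ exactly converts the heat kernels of the Casimir operator $-\Omega$ into those of the Hodge–Laplacians $\Delta_p(\tau)$. Thus, up to this universal scalar, $k_t^\tau$ is the weighted alternating sum of local traces $\sum_{p}(-1)^p p\, h_t^{\tau,p}$ attached to the virtual $K$-representation $\sum_p(-1)^p p\,\nu_p(\tau)$, where $\nu_p(\tau)=\Lambda^p\mathfrak{p}^*\otimes\tau$. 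The heart of the argument is an algebraic collapse in the representation ring of $M$. Using the $M$-module splitting $\mathfrak{p}\cong\mathfrak{a}\oplus\mathfrak{n}$, with $\mathfrak{a}$ trivial and $\mathfrak{n}\cong\R^{2n}$ the standard $\SO(2n)$-module, one has $\Lambda^p\mathfrak{p}^*\cong\Lambda^p\mathfrak{n}^*\oplus\Lambda^{p-1}\mathfrak{n}^*$, and a one-line reindexing removes the obstructive factor $p$:
\begin{equation*}
\sum_{p=0}^{2n+1}(-1)^p\,p\,\Lambda^p\mathfrak{p}^*=\sum_{p=0}^{2n}(-1)^{p+1}\Lambda^p\mathfrak{n}^*.
\end{equation*}

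It then remains to decompose $\bigl(\sum_{p}(-1)^{p+1}\Lambda^p\mathfrak{n}^*\bigr)\otimes\tau|_M$ into $M$-irreducibles and evaluate the alternating sum. Here I would invoke the branching rule from $K=\SO(2n+1)$ to $M=\SO(2n)$ together with the known highest weights of the exterior powers $\Lambda^p\R^{2n}$; after the telescoping forced by the signs $(-1)^{p+1}$, the only surviving $M$-types are exactly the $\sigma_{\tau,k}$ of Definition \ref{dertttt}, each entering with multiplicity $(-1)^{k+1}$ for $k=0,\dots,n$. Comparing Casimir eigenvalues, a direct computation identifies $\tau(\Omega)-c(\sigma_{\tau,k})$ with $\lambda_{\tau,k}^2=(\tau_{k+1}+n-k)^2$ from (\ref{hatehate}), producing the factor $e^{-t\lambda_{\tau,k}^2}$. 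Feeding (\ref{FTSup}) back in to identify each surviving summand with $h_t^{\sigma_{\tau,k}}$ yields the asserted decomposition.

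I expect the main obstacle to lie in this last step: verifying that the signed $\SO(2n+1)\!\downarrow\!\SO(2n)$ branching multiplicities of $\Lambda^p\mathfrak{n}^*\otimes\tau|_M$ telescope to precisely the single family $\{\sigma_{\tau,k}\}$ with the stated signs, and that the Weyl–Casimir bookkeeping reproduces exactly $\lambda_{\tau,k}^2$ rather than a shifted value. The collapse of the $p$-factor is formal, but pinning down the surviving highest weights and matching their Casimir scalars is where the genuine representation-theoretic work resides.
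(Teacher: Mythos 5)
The paper does not actually prove this Proposition: it is imported verbatim from \cite[Proposition 8.2, (8.13)]{MP}, so your attempt has to be measured against the proof given there, which rests on a representation-ring identity established in \cite{MP2} (following Bunke and Olbrich). Your skeleton is indeed the skeleton of that proof: Kuga's lemma to trade the Hodge Laplacians for $-\Omega$, the $M$-module splitting $\mathfrak{p}\cong\mathfrak{a}\oplus\mathfrak{n}$ to collapse $\sum_{p}(-1)^p p\,\Lambda^p\mathfrak{p}^*$ to $\sum_{q}(-1)^{q+1}\Lambda^q\mathfrak{n}^*$ (your reindexing is correct), identification of the surviving $M$-types, Casimir matching, and Frobenius reciprocity for (\ref{FTSup}); the relation $\tau(\Omega)-c(\sigma_{\tau,k})=\lambda_{\tau,k}^2$ is also correctly stated.

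However, the step you defer as ``where the genuine representation-theoretic work resides'' is the entire content of the statement, and the tool you name for it is not the one that works. Since $\tau$ is a representation of $G$, not of $K$, the $\SO(2n+1)\downarrow \SO(2n)$ branching law says nothing directly about $\Lambda^q\mathfrak{n}^*\otimes\tau|_M$; one would have to branch $G\downarrow K\downarrow M$ and control a massive cancellation, which is precisely what is missing from your outline. The argument behind the cited identity instead computes $\sum_q(-1)^q[\Lambda^q\mathfrak{n}^*\otimes V_\tau]$ as the Euler characteristic of the Lie algebra cohomology $H^\bullet(\mathfrak{n};V_\tau)$ and applies Kostant's theorem: $H^q(\mathfrak{n};V_\tau)\cong\bigoplus_{\ell(w)=q}F_{w(\Lambda_\tau+\rho)-\rho}$, the sum running over the $2(n+1)$ minimal-length representatives $w$ of $W(\mathfrak{m}_\C)\backslash W(\mathfrak{g}_\C)$; these produce exactly the $n+1$ pairs $\{\sigma_{\tau,k},w_0\sigma_{\tau,k}\}$ with $\mathfrak{a}$-weights $\pm\lambda_{\tau,k}$, and the $\rho$-shift in $w(\Lambda_\tau+\rho)-\rho$ is what generates both the ``$+1$''s in Definition \ref{dertttt} and the summand $n-k$ in (\ref{hatehate}). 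Note also that the surviving virtual $M$-module is $\sum_k(-1)^{k+1}\left([\sigma_{\tau,k}]+[w_0\sigma_{\tau,k}]\right)$, not $\sum_k(-1)^{k+1}[\sigma_{\tau,k}]$ as you assert; your final formula is consistent only because $h_t^{\sigma}$ of \cite[(8.8)]{MP} is built from a virtual $K$-representation $\sum_\nu m_\nu(\sigma)\,\nu$ whose restriction to $M$ is $\sigma\oplus w_0\sigma$. That construction --- forced by the fact that every restriction from $K$ to $M$ is $w_0$-invariant, so no integral combination of $K$-types can single out $\sigma$ alone --- is the real mechanism behind the pair $\{\sigma,w_0\sigma\}$ in (\ref{FTSup}); the intertwining equivalence $\pi_{\sigma,\lambda}\cong\pi_{w_0\sigma,-\lambda}$ you invoke is a consequence of the same symmetry, not its explanation.
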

\subsection{Asymptotic behavior of the analytic torsion}
From now on let $\tau(m)$ be the ray of representations of $G$ from in Definition~\ref{mimirep}. Denote by the same symbol its restriction to $\Gamma$. Let $\mathcal{M}I(\tau(m))$, $\mathcal{M}H(\tau(m))$, $\mathcal{M}T(\tau(m))$, $\mathcal{M}\mathcal{I}(\tau(m))$ $\mathcal{M}J(\tau(m))$ be as in \cite{MP} and $\mathcal{M}E(\tau(m))$ be as in \cite{Fe2}. Roughly speaking, they equal the value of Mellin transforms at zero of the corresponding terms in the right hand side of (\ref{sailormoonfight}) with $\tau=\tau(m)$.
\begin{Th} There exists a constant $C$ such that for $m$ sufficiently large one has
\begin{equation*}
\begin{gathered}
\mathcal{M}I(t, \tau(m)) = C(n)\vol(X)m\dim{\tau(m)}+O(m^{\frac{n(n+1)}{2}}),\\
|\mathcal{M}H(\tau(m))|\leq Cm^{\frac{n(n-1)}{2}}, \quad 
|\mathcal{M}T(\tau(m))|\leq Cm^{\frac{n(n+1)}{2}},\\
|\mathcal{M}J(\tau(m))|\leq Cm^{\frac{n(n+1)}{2}}\log{m}, \quad 
\left|\mathcal{M}\mathcal{I}(\tau(m))\right|\leq Cm^{\frac{n(n+1)}{2}},\\
|\mathcal{M}E(t, \tau(m))| \le C m^{\frac{n(n-1)}{2}}.
\end{gathered}
\end{equation*}
\end{Th}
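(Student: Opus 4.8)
The plan is to treat the six distributions according to the type of conjugacy class they encode, reducing each to a computation already carried out in \cite{MP} (identity, hyperbolic, parabolic) or in \cite{Fe2} (elliptic). The essential observation is that none of these terms is affected by the failure of neatness, which manifests itself solely through the separate distributions $\mathcal{E}^{cusp}$ and $\mathcal{J}^{cusp}$ treated elsewhere. For the identity term I would first apply Proposition \ref{easyeasygogo} to write $k_t^\tau=\sum_{k=0}^{n}(-1)^{k+1}e^{-t\lambda_{\tau,k}^2}h_t^{\sigma_{\tau,k}}$ and then use the Plancherel theorem, which identifies $I(k_t^\tau)$ with $\vol(\Ob)$ times $k_t^\tau(e)$, where $k_t^\tau(e)$ is the integral of $\Theta_{\sigma_{\tau,k},\lambda}(h_t^{\sigma_{\tau,k}})=e^{-t\lambda^2}$ against the Plancherel density of $\pi_{\sigma_{\tau,k},\lambda}$. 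Taking the Mellin transform at $s=0$ and inserting the explicit Plancherel polynomials produces $\mathcal{M}I(\tau(m))$; since the identity orbital integral is local and depends on $\Gamma$ only through $\vol(\Ob)$, this is exactly the computation of \cite[Section 8]{MP} with $\vol(X)$ replaced by $\vol(\Ob)$, giving the leading term $C(n)\vol(\Ob)\,m\,\dim\tau(m)$ with remainder $O(m^{n(n+1)/2})$. The exponents come from $\lambda_{\tau,k}=\tau_{k+1}+n-k$ being linear in $m$, the degree of the Plancherel polynomial, and $\dim\tau(m)\sim m^{n(n+1)}$ by the Weyl dimension formula.

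For the hyperbolic and parabolic terms the decisive simplification is again (\ref{FTSup}). The hyperbolic contribution $H$ is a sum over hyperbolic conjugacy classes, each weighted by a Gaussian in the geodesic length; after inserting (\ref{FTSup}) and taking the Mellin transform, the dependence on $m$ enters only through $\sigma_{\tau,k}$, which yields the bound $m^{n(n-1)/2}$. The parabolic distributions $T$, $\mathcal{I}$ and $J$ are controlled through their Fourier transforms, expressed by digamma functions as in \cite{HO}; estimating the resulting Mellin integrals gives $m^{n(n+1)/2}$ for $T$ and $\mathcal{I}$, and an additional factor $\log m$ for $J$. Since these are the standard parabolic terms of \cite{hoff,Pf} and are not altered by dropping neatness, the estimates of \cite{MP} transfer without change.

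For the elliptic term, $E(k_t^\tau)$ is a finite sum over elliptic conjugacy classes, each of the shape $\vol(\Gamma_\gamma\bs G_\gamma)$ times an orbital integral over $G_\gamma\bs G$. Applying the Plancherel formula on the centralizer, which equals $\SO_0(1,1)\times\SO(2)^{n-1}$ in the regular case, reduces each orbital integral to an expression whose Mellin transform at zero is $O(m^{n(n-1)/2})$, as established in \cite{Fe2}. Finiteness of the set of elliptic classes together with the $m$-independence of the volumes $\vol(\Gamma_\gamma\bs G_\gamma)$ then preserves this bound under summation.

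The main obstacle is the identity term: one must show that the leading asymptotic of $\mathcal{M}I(\tau(m))$ is \emph{exactly} $C(n)\vol(\Ob)\,m\,\dim\tau(m)$ with a genuine power saving $O(m^{n(n+1)/2})$ in the error, rather than merely of comparable size. This rests on the precise expansion of the Plancherel polynomials and on the cancellation produced by the alternating sum over $k$, the delicate point of \cite[Section 8]{MP}. A secondary, purely organizational matter is to confirm that the neatness hypothesis of \cite{MP} is nowhere used in deriving the bounds for $H$, $T$, $\mathcal{I}$ and $J$, so that they remain valid in our setting.
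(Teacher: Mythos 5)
Your proposal takes the same route as the paper: the paper's entire proof of this theorem is the one-line citation of \cite[Propositions 10.1, 10.3, 10.4, 10.10, 10.14]{MP} for the identity, hyperbolic and parabolic terms and of \cite{Fe2} for the elliptic term, which is exactly your reduction; your two ``obstacles'' (the precise leading-term computation, and checking that neatness is not used in those propositions of \cite{MP}) are precisely what that citation silently absorbs. One correction to your exponent bookkeeping, though: by the Weyl dimension formula $\dim\tau(m)\sim C\,m^{n(n+1)/2}$, not $m^{n(n+1)}$. Indeed, for the roots $e_i-e_j$ the pairing with the highest weight plus $\rho$ equals $(\tau_i-\tau_j)+(j-i)$, which is \emph{independent} of $m$ because the ray shifts every coordinate of the highest weight by the same amount; only the $\binom{n+1}{2}=n(n+1)/2$ roots $e_i+e_j$ contribute factors of order $m$. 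Consequently the leading term $m\dim\tau(m)$ has order $m^{n(n+1)/2+1}$, so the error $O(m^{n(n+1)/2})$ is a saving of exactly one power of $m$ (not a negligible remainder far below the main term, as your figure would suggest), and the analogous computation for $M=\SO(2n)$ gives $\dim\sigma_{\tau(m),k}\sim m^{n(n-1)/2}$, which is the actual source of the hyperbolic exponent you quote.
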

\begin{proof}
Follows from \cite[Propositions 10.1, 10.3, 10.4, 10.10, 10.14]{MP} and \cite{Fe2}.
\end{proof}

\begin{Th} There exists a constant $C$ such that for $m$ sufficiently large one has
\begin{equation*}
|\mathcal{M}J^{cusp}(\tau(m))|\leq Cm^{\frac{n(n+1)}{2}}\log{m}.
\end{equation*}
\end{Th}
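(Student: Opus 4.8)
The plan is to reduce the bound on $\mathcal{M}J^{cusp}(\tau(m))$ to the already-established estimate on $\mathcal{M}J(\tau(m))$ by exploiting the structural similarity between the two distributions. Recall from Remark~\ref{apocalyptiss} that every element of $\Gamma_M(P)$ lies in $M$, so that the character $\Theta_{\breve{\sigma}_{-\lambda}}(\gamma)$ is independent of $\lambda$, and therefore
\begin{equation*}
\mathcal{J}^{cusp}(\al) = \sum_{\gamma \in \Gamma_M(P)} C'(\gamma) \cdot \left(-I_L(1,\al) + J_L(1,\al)\right)
\end{equation*}
for constants $C'(\gamma)$ that depend only on $\gamma$ and are independent of $m$. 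The first step is therefore to insert $\al = k_t^{\tau(m)}$ and use Proposition~\ref{easyeasygogo} to write $k_t^{\tau(m)}$ as the alternating sum $\sum_{k=0}^{n}(-1)^{k+1}e^{-t\lambda_{\tau(m),k}^2}h_t^{\sigma_{\tau(m),k}}$, so that each term reduces to evaluating $\left(-I_L + J_L\right)$ on the single-representation heat kernels $h_t^{\sigma_{\tau(m),k}}$.

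Next I would take the Fourier transform of $\left(-I_L(1,\cdot) + J_L(1,\cdot)\right)$. By Theorem~\ref{FTorbint}, the invariant distribution $\mathcal{E}^{cusp}$ (built from $I_L$) has a Fourier transform of the form $\Omega^{cusp}(\sigma,\lambda) = \sum_{j\in I} c_j\,\psi(a_j + b_j i\lambda)$ with $a_j, c_j = O(1)$, $|I| = O(1)$, and crucially $b_j = O(m)$ as $m\to\infty$, with $\Omega^{cusp}$ regular at $\lambda = 0$. Applying (\ref{FTSup}), only the representations $\sigma' \in \{\sigma_{\tau(m),k}, w_0\sigma_{\tau(m),k}\}$ contribute, each giving $\Theta_{\sigma',\lambda}(h_t^{\sigma_{\tau(m),k}}) = e^{-t\lambda^2}$. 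Hence the relevant integrals take the shape $\int_{\R} e^{-t\lambda^2}\,\psi(a_j + b_j i\lambda)\,d\lambda$, which is exactly $\phi_3(t)$ from Corollary~\ref{cor3.10}. This gives an asymptotic expansion $\sum a'_j t^{j-1/2} + \sum b'_j t^{j-1/2}\log t + \sum c'_j t^j$ whose coefficients I would track explicitly in their dependence on $m$ through the parameters $a_j, b_j, c_j$.

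The Mellin transform at zero, $\mathcal{M}J^{cusp}(\tau(m))$, then extracts the coefficient governing the $t^0$ behavior of the combination weighted by $(-1)^{k+1}p$; this is where the factor $\log m$ appears. The key point is that $b_j = O(m)$ forces a rescaling $\lambda \mapsto \lambda/b_j$ in $\phi_3$, and combining this with the functional equation $\psi(z+1) = \psi(z) + 1/z$ used in the proof of Corollary~\ref{cor3.10} produces logarithmic contributions of size $O(\log m)$; summing over the $O(1)$ terms in $I$ and over $k = 0,\ldots,n$ and accounting for the polynomial growth $O(m^{n(n+1)/2})$ coming from $\dim \sigma_{\tau(m),k}$ and the weights $\lambda_{\tau(m),k}^2 = O(m^2)$ yields the claimed bound. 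The main obstacle I anticipate is the careful bookkeeping of the $m$-dependence: one must verify that the growth in $m$ of the coefficients in the $\phi_3$ expansion, combined with the dimension and eigenvalue factors, never exceeds $m^{n(n+1)/2}\log m$, and in particular that the leading $t^{-1/2}$ and $t^{-1/2}\log t$ singular terms cancel or are removed by the Mellin procedure so that only the $O(m^{n(n+1)/2}\log m)$ contribution to the constant term survives. Since the analysis mirrors the treatment of $\mathcal{M}J(\tau(m))$ in \cite{MP}, the estimate should propagate with the same exponent, the extra $\log m$ being the signature of the digamma singularity.
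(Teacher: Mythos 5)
Your opening reduction is exactly the paper's: by Remark~\ref{apocalyptiss} and the finiteness of $\Gamma_M(P)$ (Remark~\ref{marinad}), $\mathcal{J}^{cusp}(k_t^{\tau(m)})$ is a finite sum, with $m$-independent coefficients $C'(\gamma)$, of the single distribution $-I_L(1,\cdot)+J_L(1,\cdot)$ evaluated at $k_t^{\tau(m)}$, which Proposition~\ref{easyeasygogo} breaks into the kernels $h_t^{\sigma_{\tau(m),k}}$. But from that point on you analyze the wrong object. By Proposition~\ref{orbringherbacktome}, i.e.\ by (\ref{aqwrer}), the difference $J_L(1,\al)-I_L(1,\al)$ is precisely the non-invariant correction term
\begin{equation*}
\frac{1}{2\pi i}\sum_{\sigma\in\widehat{M}}\dim(\breve{\sigma})\int_{D_\epsilon}\Tr\left(J_{\bar{P}_0|P_0}(\sigma,z)^{-1}\tfrac{d}{dz}J_{\bar{P}_0|P_0}(\sigma,z)\,\pi_{\sigma,z}(\al)\right)dz,
\end{equation*}
built from the logarithmic derivative of the intertwining operators. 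The digamma expansion $\Omega^{cusp}(\sigma,\lambda)=\sum_j c_j\psi(a_j+b_j i\lambda)$ of Theorem~\ref{FTorbint} is the Fourier transform of the \emph{invariant} part $I_L$ (that is, of $\mathcal{E}^{cusp}$); inside the combination $-I_L+J_L$ that part cancels out, and nothing that remains is described by $\Omega^{cusp}$ or by Corollary~\ref{cor3.10}. In effect your second and third paragraphs sketch the proof of Theorem~\ref{wagnerreloaded} (the bound $|\mathcal{M}\mathcal{E}^{cusp}(\tau(m))|\le Cm\log m$ for the digamma distribution), not the theorem you were asked to prove; the distribution $\mathcal{J}^{cusp}$ is never actually estimated.

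The missing idea is the treatment of the intertwining-operator term itself. After the Remark~\ref{apocalyptiss} reduction, $\mathcal{J}^{cusp}(k_t^{\tau(m)})$ has exactly the same structure as the distribution $J(k_t^{\tau(m)})$ appearing in the trace formula, whose Mellin transform at zero is bounded in \cite[Proposition 10.14]{MP} by $Cm^{\frac{n(n+1)}{2}}\log m$ via the explicit Knapp--Stein theory of $J_{\bar{P}_0|P_0}(\sigma,z)$ (this is where both the dimension factor $\dim\sigma_{\tau(m),k}=O(m^{\frac{n(n+1)}{2}})$ and the $\log m$ genuinely come from). This is the paper's entire proof: the bound of \cite[Proposition 10.14]{MP} propagates through the finite sum with $m$-independent constants. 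To repair your argument you would either have to quote that proposition, as the paper does, or redo its analysis of the logarithmic derivative of the intertwining operators; no amount of bookkeeping of the digamma coefficients $a_j,b_j,c_j$ can substitute for it, since those coefficients do not enter $\mathcal{J}^{cusp}$ at all.
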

\begin{proof}
Follows with minor modifications from \cite[Proposition 10.14]{MP}, keeping in mind Remark \ref{apocalyptiss}.
\end{proof}

It follows from Corollary \ref{cor3.10} that $\mathcal{E}^{cusp}(k_t^{\tau(m)})$ admits an asymptotic expansion 
$$\mathcal{E}^{cusp}(k_t^{\tau(m)}) \sim \sum_{j=0}^\infty a'_jt^{j-1/2}+\sum_{j=0}^\infty b'_j t^{j-1/2}\log{t}
+\sum_{j=0}^\infty c'_jt^{j}$$
for some $a'_j$, $b'_j$, $c'_j \in \C$.
Moreover, $\mathcal{E}^{cusp}(k_t^{\tau(m)}) = O(e^{-t m^2})$ as $m \to \infty$. Hence 
$$M\mathcal{E}^{cusp}(s; \tau(m)) := \int_0^\infty t^{s-1} \mathcal{E}^{cusp}(k_t^{\tau(m)}) \;dt$$
converges for $\RRe(s) > \frac{d-1}{2}$, admits a meromorphic continuation to $\C$ and has at most simple pole at $s=0$. Denote 
$$ M\mathcal{E}^{cusp}(\tau(m)):= \frac{d}{ds} \left. \frac{M\mathcal{E}^{cusp}(s; \tau(m))}{\Gamma(s)} \right|_{s=0}.$$

\begin{Th}\label{wagnerreloaded} There exists a constant $C$ such that for $m$ sufficiently large one has
\begin{equation*}
|\mathcal{M}\mathcal{E}^{cusp}(\tau(m))|\leq C \cdot m  \log(m).
\end{equation*}
\end{Th}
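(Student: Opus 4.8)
The plan is to reduce $\mathcal{M}\mathcal{E}^{cusp}(\tau(m))$ to a finite sum of explicit one--dimensional integrals and then read off the $m$--dependence by a rescaling argument. First I would substitute the decomposition of $k_t^{\tau(m)}$ from Proposition~\ref{easyeasygogo} into the Fourier--transform formula of Theorem~\ref{FTorbint}. By the character relations (\ref{FTSup}), $\Theta_{\sigma,\lambda}(h_t^{\sigma_{\tau(m),k}})$ is nonzero only for $\sigma\in\{\sigma_{\tau(m),k},w_0\sigma_{\tau(m),k}\}$, so the sum over $\hat M$ collapses and
\begin{equation*}
\mathcal{E}^{cusp}(k_t^{\tau(m)})=\sum_{k=0}^{n}(-1)^{k+1}e^{-t\lambda_{\tau(m),k}^2}\int_{\R}\Omega_k^{cusp}(-\lambda)\,e^{-t\lambda^2}\,d\lambda,
\end{equation*}
where $\Omega_k^{cusp}(-\lambda)$ collects the (at most two) contributions from $\sigma=\sigma_{\tau(m),k}$ and $\sigma=w_0\sigma_{\tau(m),k}$. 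By Theorem~\ref{FTorbint} it is a sum of $O(1)$ digamma terms $c_j\psi(a_j+b_j i\lambda)$ with $a_j=O(1)$, $c_j=O(1)$ and $b_j=O(m)$. The decisive structural feature is that the number of summands in $k$ and $j$ is bounded independently of $m$; this is what ultimately separates $\mathcal{E}^{cusp}$ from the larger term $\mathcal{I}$, whose $\sigma$--sum does not collapse.

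Next I would compute the Mellin transform term by term. Using $\int_0^\infty t^{s-1}e^{-t(\lambda^2+\lambda_{\tau(m),k}^2)}\,dt=\Gamma(s)(\lambda^2+\lambda_{\tau(m),k}^2)^{-s}$ together with Fubini (justified for $\RRe(s)$ large), one gets
\begin{equation*}
\frac{M\mathcal{E}^{cusp}(s;\tau(m))}{\Gamma(s)}=\sum_{k=0}^{n}(-1)^{k+1}\int_{\R}\Omega_k^{cusp}(-\lambda)\,(\lambda^2+\lambda_{\tau(m),k}^2)^{-s}\,d\lambda=:\sum_{k=0}^{n}(-1)^{k+1}F_k(s).
\end{equation*}
The $\Gamma(s)$ cancels cleanly, so no finite--part analysis is needed: each $F_k(s)$ converges for $\RRe(s)>1/2$ (the integrand decays like $\log|\lambda|\cdot|\lambda|^{-2s}$ by $\psi(a+b i\lambda)=\log(a+b i\lambda)+O(1/(b\lambda))$) and continues meromorphically with possible poles only at $s=1/2$, hence is regular at $s=0$. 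Therefore $\mathcal{M}\mathcal{E}^{cusp}(\tau(m))=\sum_{k}(-1)^{k+1}F_k'(0)$, and it suffices to bound each $F_k'(0)$ by $O(m\log m)$.

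To extract the growth in $m$, for each digamma summand I would substitute $\lambda=\lambda_{\tau(m),k}\mu$ and write the corresponding piece of $F_k(s)$ as $c^{\,1-2s}H(s)$, where $c=\lambda_{\tau(m),k}=O(m)$ and $H(s)=\int_{\R}\psi(a-\beta i\mu)(\mu^2+1)^{-s}\,d\mu$ with $\beta=b_j c=O(m^2)$. Then $F_k'(0)$ is a finite combination of terms $c\,[-2\log c\,H(0)+H'(0)]$. The heart of the matter is the behaviour of $H$ at $s=0$: writing $\psi(a-\beta i\mu)=\log\beta+\log(a/\beta-i\mu)+O\!\big(1/(\beta\mu)\big)$ and setting $B(s):=\int_{\R}(\mu^2+1)^{-s}\,d\mu=\sqrt\pi\,\Gamma(s-1/2)/\Gamma(s)$, one has $B(0)=0$ and $B'(0)=-2\pi$. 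Consequently the only $\beta$--growing contribution, namely $\log\beta\cdot B(s)$, is \emph{annihilated} in $H(0)$, leaving $H(0)=O(1)$, but it \emph{survives linearly} in $H'(0)$, contributing $\log\beta\cdot B'(0)=O(\log m)$, so that $H'(0)=O(\log m)$. Hence each summand of $F_k'(0)$ is $c\,[O(\log c)\cdot O(1)+O(\log m)]=O(m\log m)$, and summing the $O(1)$ many terms over $k$ and $j$ yields $|\mathcal{M}\mathcal{E}^{cusp}(\tau(m))|\le C\,m\log m$ with $C$ independent of $m$.

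The main obstacle is precisely the uniform--in--$m$ control of the analytically continued integrals $H(0)$ and $H'(0)$: one must verify that the sole source of growth in $\beta=O(m^2)$ is the additive constant $\log\beta$ in the digamma asymptotics, that the remaining $\mu$--dependent pieces $\log(a/\beta-i\mu)$ and the $O(1/(\beta\mu))$ tail contribute $O(1)$ (indeed $O(\log m/m)$) uniformly as $\beta\to\infty$, and that no spurious pole appears at $s=0$. These estimates can be obtained with the same techniques as Corollary~\ref{cor3.10}, that is Lemmas~\ref{pinkyellow} and~\ref{Lemas2}: after shifting $a$ into $(0,1]$ via $\psi(z+1)=\psi(z)+1/z$, the subleading pieces reduce to Gaussian integrals against $1/(\mu+\mathrm{const})$, whose continued values are uniformly bounded. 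Once the prefactor $c^{\,1-2s}$ is separated, the single power of $m$ (from $c$) and the single power of $\log m$ (from $H'(0)$) appear transparently, giving the claimed bound.
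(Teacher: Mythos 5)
Your proposal is correct and reaches the bound along the same reduction skeleton as the paper --- substitute the decomposition of Proposition~\ref{easyeasygogo} and the character relations (\ref{FTSup}) into Theorem~\ref{FTorbint}, so that only $\sigma_{\tau(m),k}$ and $w_0\sigma_{\tau(m),k}$ survive --- but the core analytic step is executed by a genuinely different technique. The paper normalizes the digamma arguments with $\psi(z+1)=\psi(z)+1/z$ and then feeds each digamma term and each rational term into two closed-form evaluations (Lemmas~\ref{22:00} and~\ref{Gammacontr}, adapted from \cite{MP}), which produce the exact quantities $-\tfrac{2}{b_j}\log\Gamma(a_j+b_j\lambda_{\tau(m),k})+C(\psi)$ and $-\tfrac{2}{e_j}\log(d_j/e_j+\lambda_{\tau(m),k})$; Stirling's formula then gives $O(m\log m)$. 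You instead cancel $\Gamma(s)$ against the elementary $t$-integral, rescale $\lambda=\lambda_{\tau(m),k}\mu$ to pull out the prefactor $c^{1-2s}$, and read the growth off $B(s)=\sqrt{\pi}\,\Gamma(s-1/2)/\Gamma(s)$ through $B(0)=0$, $B'(0)=-2\pi$: the constant $\log\beta$ in the digamma asymptotics is annihilated in $H(0)$ and survives only in $H'(0)$. Your route avoids importing the two closed-form lemmas and makes the bookkeeping transparent (one factor of $m$ from the scaling prefactor, one $\log m$ from $H'(0)$ and from $\log c\cdot H(0)$); the paper's route yields exact constants and delegates all uniformity-in-$m$ issues to the quoted lemmas, whereas you must verify by hand that $H(0)=O(1)$ and $H'(0)=O(\log m)$ uniformly --- which you correctly identify as the main obstacle, and which is doable by the methods of Lemmas~\ref{pinkyellow} and~\ref{Lemas2}.

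One caveat: you quote the parameter bounds from the statement of Theorem~\ref{FTorbint} ($a_j=O(1)$, $b_j=O(m)$, $|J|=O(1)$), but what Lemma~\ref{kisa} actually produces (the digamma arguments being $1+(\pm k_2(\sigma)-k)/M+i\lambda/M$ for $\gamma$ of order $M$) is the transposed set of bounds, $a_j=O(m)$ and possibly negative, $b_j=O(1)$; this is the version the paper's own proof of the present theorem works with, and there the shift $\psi(z+1)=\psi(z)+1/z$ generates $O(m)$ rational terms rather than $O(1)$. This does not invalidate your argument: under either reading $\beta=b_j\lambda_{\tau(m),k}$ satisfies $\log\beta=O(\log m)$, and each rational term still contributes $O(\log m)$ by your own (or the paper's Lemma~\ref{22:00}) computation. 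But your concluding step ``summing the $O(1)$ many terms'' must be relaxed to allow $O(m)$ rational summands of size $O(\log m)$ each, which still yields the claimed bound $C\, m\log m$.
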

In order to prove Theorem \ref{wagnerreloaded} we need the following technical lemmas.
\begin{Lemma}\label{22:00}
For $c\in(0,\infty)$, $s\in\C$, $\RRe(s)>0$, $e_j, d_j > 0$
let
\begin{align*}
\zeta_{c}(s):=\frac{1}{\pi}\int_{0}^{\infty}{t^{s-1}e^{-tc^2}\int_{\infty}
{\frac{e^{
-tz^{2}}}{i e_j z+d_j}\; dz}\;dt}.
\end{align*}
 Then $\zeta_{c}(s)$ has a meromorphic continuation to
$\C$ with a simple pole at~0. Moreover, one has
\begin{align*}
\frac{d}{ds}\biggr|_{s=0}\frac{\zeta_c(s)}{\Gamma(s)}=-\frac{2}{e_j}\log{\left(c+d_j/e_j\right)}.
\end{align*}
\end{Lemma}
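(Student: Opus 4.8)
The plan is to evaluate the inner integral in closed form, reduce $\zeta_c$ to a Mellin transform of a complementary error function, and then obtain the derivative at $s=0$ by differentiating in the parameter $c$, so as to avoid computing the full constant term of the Laurent expansion.

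First I would read the inner integral as $\int_{\R}\frac{e^{-tz^2}}{ie_j z+d_j}\,dz$ and simplify it exactly as in the proof of Lemma \ref{pinkyellow}. Writing $\beta:=d_j/e_j>0$ and factoring out $ie_j$, one has $\frac{1}{ie_jz+d_j}=\frac{1}{ie_j}\big(\frac{z}{z^2+\beta^2}+\frac{i\beta}{z^2+\beta^2}\big)$; the odd term integrates to zero, leaving $\frac{\beta}{e_j}\int_{\R}\frac{e^{-tz^2}}{z^2+\beta^2}\,dz$. Solving the elementary ODE $F'(t)-\beta^2F(t)=-\sqrt{\pi/t}$, $F(0)=\pi/\beta$, satisfied by $F(t):=\int_{\R}\frac{e^{-tz^2}}{z^2+\beta^2}\,dz$, gives $F(t)=\frac{\pi}{\beta}e^{\beta^2 t}\textnormal{erfc}(\beta\sqrt t)$, so the inner integral equals $\frac{\pi}{e_j}e^{\beta^2 t}\textnormal{erfc}(\beta\sqrt t)$. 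Substituting back,
$$\zeta_c(s)=\frac{1}{e_j}\int_0^\infty t^{s-1}e^{-(c^2-\beta^2)t}\textnormal{erfc}(\beta\sqrt t)\,dt=:\frac{1}{e_j}G(s),$$
the integral converging precisely for $\RRe(s)>0$.

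Next I would establish the meromorphic continuation and the nature of the pole. Integrating by parts (using $\tfrac{d}{dt}\textnormal{erfc}(\beta\sqrt t)=-\tfrac{\beta}{\sqrt{\pi t}}e^{-\beta^2 t}$) yields the functional equation $G(s)=\tfrac1s\Phi(s)$ with $\Phi(s):=(c^2-\beta^2)G(s+1)+\tfrac{\beta}{\sqrt\pi}\Gamma(s+\tfrac12)c^{-2s-1}$, where $G(s+1)$ is holomorphic for $\RRe(s)>-1$; iterating continues $G$ to all of $\C$. The Laplace transform identity $G(1)=\int_0^\infty e^{-(c^2-\beta^2)t}\textnormal{erfc}(\beta\sqrt t)\,dt=\frac{1}{c(c+\beta)}$ then gives $\Phi(0)=(c^2-\beta^2)G(1)+\beta/c=1$, so $\zeta_c$ has a simple pole at $s=0$ with residue $1/e_j$, which is \emph{independent of $c$}.

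The core of the computation is the derivative, and here the main idea is to differentiate in $c$ rather than expand $\Phi'(0)$. Since the residue is $c$-independent, $\frac{\zeta_c(s)}{\Gamma(s)}$ is holomorphic in $s$ near $0$ and smooth in $c$, so one may interchange $\partial_c$ with $\partial_s\big|_{s=0}$. Using $\partial_c\zeta_c(s)=-\frac{2c}{e_j}G(s+1)$, which is regular at $s=0$, together with $\frac{1}{\Gamma(s)}=s+\gamma s^2+O(s^3)$, one gets $\frac{d}{dc}F(c)=\partial_c\zeta_c(s)\big|_{s=0}=-\frac{2c}{e_j}G(1)=-\frac{2}{e_j(c+\beta)}$, where $F(c):=\frac{d}{ds}\big|_{s=0}\frac{\zeta_c(s)}{\Gamma(s)}$. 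Integrating gives $F(c)=-\frac{2}{e_j}\log(c+\beta)+\text{const}$, with $c+\beta=c+d_j/e_j$ exactly the asserted argument. The remaining and most delicate step is to fix the integration constant: rescaling $t=\tau/c^2$ shows $G(s)=c^{-2s}\Gamma(s)(1+o(1))$ as $c\to\infty$, hence $F(c)=-\frac{2}{e_j}\log c+o(1)$, forcing the constant to vanish since $\log(c+\beta)-\log c\to0$. The one point requiring care is the uniformity of this large-$c$ asymptotic, so that it survives passage through $\partial_s\big|_{s=0}$; this I would secure by dominated convergence applied to $c^{2s}G(s)$ and its $s$-derivative on a small disc around $s=0$.
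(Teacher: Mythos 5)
Your proof is correct, and it takes a genuinely different route from the paper: the paper does not prove Lemma \ref{22:00} at all, but defers to \cite[Lemma 10.5]{MP} ``with minor modifications'' (the modification being the rescaling $z\mapsto \beta z$, $t\mapsto t/\beta^{2}$ with $\beta=d_j/e_j$, which reduces the general case to the normalized one $e_j=d_j=1$ treated there and produces the extra factor $\beta^{-2s}/e_j$, whence the shift $-\tfrac{2}{e_j}\log\beta$ of the constant). Your argument is instead self-contained: the closed-form evaluation of the inner integral as $\tfrac{\pi}{e_j}e^{\beta^{2}t}\textnormal{erfc}(\beta\sqrt t)$, the functional equation $G(s)=\Phi(s)/s$ by integration by parts, the residue computation $\Phi(0)=(c^2-\beta^2)G(1)+\beta/c=1$ via the Laplace transform of $\textnormal{erfc}$, and above all the differentiation in $c$, which replaces the computation of the constant term of the Laurent expansion of $\zeta_c$ at $s=0$ (the hard part of any direct approach) by the computation of the regular value $G(1)$. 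I checked your end result both against the normalized case and against the independently computable point $c=\beta$, where $G(s)=\beta^{-2s}\Gamma(s+\tfrac12)/(s\sqrt\pi)$ gives $\frac{d}{ds}\big|_{s=0}\zeta_\beta(s)/\Gamma(s)=-\tfrac{2}{e_j}\log(2\beta)$; both agree with your formula, so the residue, the $c$-derivative, and the constant all come out right.

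One step is imprecise as written: you invoke ``dominated convergence applied to $c^{2s}G(s)$ and its $s$-derivative on a small disc around $s=0$,'' but $c^{2s}G(s)$ has a simple pole at $s=0$, so neither it nor its $s$-derivative converges on such a disc, and its integral representation anyway only holds for $\RRe(s)>0$. The repair is to subtract the singular part first: $H_c(s):=c^{2s}G(s)-\Gamma(s)$ is holomorphic on $\RRe(s)>-\tfrac12$ (the residues at $0$ cancel) and there equals $\int_0^\infty \tau^{s-1}e^{-\tau}\bigl(e^{x^{2}}\textnormal{erfc}(x)-1\bigr)\,d\tau$ with $x=\beta\sqrt{\tau}/c$; since $0\le 1-e^{x^{2}}\textnormal{erfc}(x)\le 2x/\sqrt{\pi}$, one gets $|H_c(s)|\le \tfrac{2\beta}{c\sqrt{\pi}}\,\Gamma\!\left(\RRe(s)+\tfrac12\right)$, so $H_c\to 0$ uniformly on a disc about $0$ and, by Cauchy estimates, so does $H_c'$; writing $\zeta_c(s)/\Gamma(s)=e_j^{-1}c^{-2s}\bigl(1+H_c(s)/\Gamma(s)\bigr)$ then yields $F(c)=-\tfrac{2}{e_j}\log c+o(1)$, which is exactly what you need to kill the integration constant. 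Alternatively, you can avoid large-$c$ asymptotics altogether and pin the constant at the single point $c=\beta$ using the Mellin transform $\int_0^\infty u^{s-1}\textnormal{erfc}(\sqrt u)\,du=\Gamma(s+\tfrac12)/(s\sqrt{\pi})$, as above.
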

\begin{proof}
Follows with minor modifications from \cite[Lemma 10.5]{MP}\label{Ratcontr}.
\end{proof}

\begin{Lemma}\label{Gammacontr}
Let $c\in\R^+$, $s\in\C$, $\RRe(s)>1/2$, $a_j, b_j > 0$.
Define
\begin{align*}
\tilde{\zeta}_c(s):=\frac{1}{\pi}\int_{0}^{\infty}{t^{s-1}e^{-tc^2}\int_{
\mathbb{R}}{e^{
-t\lambda^{2
}}\psi\left(a_j+i b_j \lambda \right)d\lambda}\;dt}.
\end{align*}
Then 
$\tilde{\zeta}_c(s)$ has a meromorphic continuation to
$s\in\C$ with at most a simple pole at $s=0$. Moreover, there exist a constant
$C(\psi)$ which is independent of $c$, $a_j$ and $b_j$ such that
\begin{align*}
\frac{d}{ds}\biggr|_{s=0}\frac{\tilde{\zeta}_c(s)}{\Gamma(s)}=-\frac{2}{b_j}\log
\Gamma(a_j+c b_j)+C(\psi).
\end{align*}
\end{Lemma}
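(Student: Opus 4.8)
The plan is to follow the pattern of Lemma \ref{22:00}, replacing the rational kernel by the digamma function and extracting the $\log\Gamma$ term by a contour computation. First I would establish the meromorphic continuation and the pole structure. Writing $\phi_3(t):=\int_{\R} e^{-t\lambda^2}\psi(a_j+ib_j\lambda)\,d\lambda$ as in Corollary \ref{cor3.10}, the factor $e^{-tc^2}$ with $c>0$ makes the integrand of $\tilde{\zeta}_c(s)$ decay exponentially as $t\to\infty$, so $\frac1\pi\int_1^\infty t^{s-1}e^{-tc^2}\phi_3(t)\,dt$ is entire. For the piece $\int_0^1$ I would insert the small-$t$ asymptotic expansion of $e^{-tc^2}\phi_3(t)$; since $e^{-tc^2}$ is smooth at $0$, Corollary \ref{cor3.10} yields an expansion $\sum_j \alpha_j t^{j-1/2}+\sum_j\beta_j t^{j-1/2}\log t+\sum_j\gamma_j t^j$. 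Integrating term by term via $\int_0^1 t^{s-1}t^\nu\,dt=\frac1{s+\nu}$ and $\int_0^1 t^{s-1}t^\nu\log t\,dt=-\frac1{(s+\nu)^2}$, the half-integer powers and their logarithms contribute poles only at half-integers, so the only term producing a pole at $s=0$ is $\gamma_0 t^0$, and that pole is simple. This proves the first assertion.

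Next I would pass to a cleaner integral. Interchanging the order of integration (valid for $\RRe(s)>1/2$) and using $\int_0^\infty t^{s-1}e^{-t(c^2+\lambda^2)}\,dt=\Gamma(s)(c^2+\lambda^2)^{-s}$ gives
\begin{equation*}
F(s):=\frac{\tilde{\zeta}_c(s)}{\Gamma(s)}=\frac1\pi\int_{\R}\frac{\psi(a_j+ib_j\lambda)}{(c^2+\lambda^2)^s}\,d\lambda .
\end{equation*}
Because $\tilde{\zeta}_c$ has at most a simple pole at $s=0$ while $1/\Gamma$ has a simple zero there, $F$ is holomorphic at $s=0$, and it is $F'(0)$ that must be evaluated even though the integral itself converges only for $\RRe(s)>1/2$.

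The key step is differentiation in $c$. Differentiating under the integral gives $\partial_c F(s)=-4c\,s\,G(s)$, where $G(s):=\frac1\pi\int_0^\infty\frac{\RRe\,\psi(a_j+ib_j\lambda)}{(c^2+\lambda^2)^{s+1}}\,d\lambda$ is holomorphic near $s=0$ (the extra power pushes convergence to $\RRe(s)>-1/2$). Matching Taylor coefficients in $\partial_c F(s)=-4c\,G(0)\,s+O(s^2)$ yields $\partial_c F'(0)=-4c\,G(0)$. To compute $G(0)=\frac1\pi\int_0^\infty\frac{\RRe\,\psi(a_j+ib_j\lambda)}{c^2+\lambda^2}\,d\lambda$, I would write $\RRe\,\psi(a_j+ib_j\lambda)=\tfrac12\bigl(\psi(a_j+ib_j\lambda)+\psi(a_j-ib_j\lambda)\bigr)$, extend to $\R$ by evenness (the imaginary part is odd and integrates to zero), and close the contour in the lower half-plane, where $\psi(a_j+ib_j\lambda)$ is holomorphic with only logarithmic growth. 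The single enclosed pole at $\lambda=-ic$ gives $\int_{\R}\frac{\psi(a_j+ib_j\lambda)}{c^2+\lambda^2}\,d\lambda=\frac{\pi}{c}\,\psi(a_j+cb_j)$, hence $G(0)=\frac{\psi(a_j+cb_j)}{2c}$ and $\partial_c F'(0)=-2\psi(a_j+cb_j)$. Integrating in $c$ and using $\psi=(\log\Gamma)'$ produces $F'(0)=-\frac{2}{b_j}\log\Gamma(a_j+cb_j)+C$ with $C$ independent of $c$, which is the desired main term.

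The hard part is controlling the $c$-independent constant and establishing its stated independence of $a_j$ and $b_j$. Here I would isolate the $b_j$-dependence by the substitution $\lambda\mapsto\lambda/b_j$, which gives $F(s)=b_j^{2s-1}\Phi(s)$ with $\Phi$ the corresponding $b_j=1$ integral at argument $cb_j$; differentiating $F$ in $a_j$ instead (where $\psi'$ decays like $\lambda^{-2}$, so the integral is already holomorphic at $s=0$) together with an analogous contour evaluation identifies $\partial_{a_j}F'(0)$ and checks it against the derivative of the $\log\Gamma$ term. Carrying out this bookkeeping exactly as in \cite[Lemma 10.5]{MP} is the delicate point of the argument; the contour computation above is the substantive content, and once the reduction to a single universal constant $C(\psi)$ is verified the lemma follows.
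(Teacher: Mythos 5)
Most of your argument is correct, and it is presumably the mechanism behind the result the paper relies on: the paper itself gives no proof, only the line ``follows from \cite[Lemma 10.6]{MP} with minor modifications''. In particular, the meromorphic continuation via the small-$t$ expansion of Corollary \ref{cor3.10}, the Fubini reduction to $F(s)=\frac{1}{\pi}\int_{\R}\psi(a_j+ib_j\lambda)(c^2+\lambda^2)^{-s}\,d\lambda$, the identity $\partial_c F'(0)=-2\psi(a_j+cb_j)$ obtained from the residue at $\lambda=-ic$ (all poles of $\psi(a_j+ib_j\lambda)$ lie in the upper half-plane), and the integration in $c$ are all sound; running the same scheme on the rational analogue, Lemma \ref{22:00}, reproduces its stated answer exactly, which is a good consistency check.

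The genuine gap is that differentiation in $c$ determines $F'(0)$ only up to an additive function of $(a_j,b_j)$, and you never supply the boundary condition that fixes this function --- yet that is exactly where the content of the lemma sits, since the statement asserts the constant is independent of $a_j$ and $b_j$. Worse, your own proposed bookkeeping, if actually carried out, refutes that assertion. The substitution $\lambda\mapsto\lambda/b_j$ gives $F(s)=b_j^{2s-1}F_1(s)$, with $F_1$ the $b_j=1$ integral at $c'=cb_j$, hence $F'(0)=\tfrac{2\log b_j}{b_j}F_1(0)+\tfrac{1}{b_j}F_1'(0)$; so you need the value $F_1(0)$, which you never compute. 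It is not zero: $F_1(0)$ equals the $t^0$-coefficient in the small-$t$ expansion of $\frac{1}{\pi}\int_{\R}e^{-t\lambda^2}\RRe\psi(a_j+i\lambda)\,d\lambda$, and writing $\RRe\psi(a+i\lambda)=\frac12\log(a^2+\lambda^2)-\frac{a}{2(a^2+\lambda^2)}+O(\lambda^{-2})$ and using Binet's formula for the remainder yields $F_1(0)=a_j-\tfrac12$. One honest evaluation (Binet's second formula for $\log\Gamma$, or Stirling matching as $c\to\infty$) then gives $F_1'(0)=-2\log\Gamma(a_j+cb_j)+\log(2\pi)$, so the constant in the lemma equals $\bigl((2a_j-1)\log b_j+\log(2\pi)\bigr)/b_j$: independent of $c$, but genuinely dependent on $a_j$ and $b_j$ whenever $b_j\neq1$. (In Lemma \ref{22:00} the analogous cross term is absorbed because $\log(e_jc+d_j)=\log e_j+\log(c+d_j/e_j)$; $\log\Gamma$ has no such homogeneity, which is precisely why the digamma case differs.) So your argument, completed, proves the formula with a constant $C_j$ depending on $(a_j,b_j)$ --- which is in fact what the paper uses in the proof of Theorem \ref{wagnerreloaded}, where the constants appear as $C_j(\psi)$ and only an $O(m)$ bound matters --- but it does not, and cannot, establish the universality claimed in the statement. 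A small further slip: $\psi'(a_j+ib_j\lambda)$ decays like $|\lambda|^{-1}$, not $\lambda^{-2}$; only its real (even) part is $O(\lambda^{-2})$, so your $\partial_{a_j}$ integral converges only after symmetrization.
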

\begin{proof}
Follows from \cite[Lemma 10.6]{MP} with minor modifications.
\end{proof}
\begin{proof}[Proof of Theorem \ref{wagnerreloaded}]
Let 
\[
\mathcal{M}\mathcal{E}^{cusp}(s;\sigma_{\tau(m),k}):=\int_0^\infty t^{s-1} 
e^{-t\lambda_{\tau(m),k}^2}\mathcal{E}^{cusp}(h_t^{\sigma_{\tau(m),k}})\;dt.
\]
As above it follows 
that the integral converges for $\RRe(s)>(d-2)/2$ and
admits a meromorphic continuation to $\C$ with at most a simple
pole  at $s=0$. By \cite[Proposition 8.2]{MP},
\begin{align*}
\mathcal{M}\mathcal{E}^{cusp}(\tau(m))=\sum_{k=0}^n(-1)^{k+1}\frac{d}{ds}\biggr|_{s=0}
\frac{\mathcal{M}\mathcal{E}^{cusp}
(s;\sigma_{\tau(m),k})}{\Gamma(s)}.
\end{align*}
In order to prove Theorem \ref{wagnerreloaded} it suffices to consider 
\begin{equation}\label{BIOLOGICAL_TRASH}
 \frac{d}{ds}\biggr|_{s=0}
\frac{\mathcal{M}\mathcal{E}^{cusp}
(s;\sigma_{\tau(m),k})}{\Gamma(s)} = \frac{d}{ds} \left. 
\left(  \int_0^\infty t^{s-1} e^{-t \lambda_{\tau(m),k}^2} 
\mathcal{E}^{cusp} (h_t^{\sigma_{\tau(m),k}})  dt \right) \right|_{s=0}
\end{equation}
as $m \to \infty$.
By Theorem \ref{FTorbint}  we can rewrite the left hand side of (\ref{BIOLOGICAL_TRASH}) as
$$  \frac{d}{ds}\biggr|_{s=0} \int_0^\infty t^{s-1} e^{-t \lambda_{\tau(m),k}^2} \int_{\R} \left( \Omega(\sigma_{\tau(m),k}, \lambda) + \Omega(w_0 \sigma_{\tau(m),k}, \lambda)\right) e^{-t \lambda^2} \;d \lambda \;dt,$$
where 
$$\Omega(\sigma_{\tau(m),k}, \lambda)=\sum_{j \in J} c_j \cdot \psi(a_j + i \lambda b_j)$$
for some $a_j\in \R$, growing not faster than linearly in $m$; $c_j$, that is bounded by a constant as $m$ grows; $b_j\in \R$ not depending on $m$; and $J$ a finite set, not depending on $m$ as well. 
Note that 
$$ \int_{\R} \psi(a_j + i b_j \lambda) e^{-t \lambda^2} d \lambda = \int_{\R} \psi(a_j - i  b_j \lambda) e^{-t \lambda^2} d \lambda,$$
hence we can assume that all $b_j > 0$. As $\psi(z+1)=\psi(z)+\frac{1}{z}$, we can  we can rewrite the left hand side of (\ref{BIOLOGICAL_TRASH}) as
\begin{equation}\label{22:02}
\frac{d}{ds}\biggr|_{s=0} \int_0^\infty t^{s-1} e^{-t \lambda_{\tau(m),k}^2} \int_{\R} \left( \sum_{j \in J'} c_j \cdot \psi(a_j + i \lambda b_j) + \sum_{j \in J''} \frac{1}{i e_j \lambda + d_j}\right) e^{-t \lambda^2} \;d\lambda\;dt.
\end{equation}
Above $a_j, b_j, c_j, d_j > 0$, $J$ and $J'$ are finite sets; $b_j$, $e_j$ and $|J'|$ does not depend on $m$; $a_j$, $d_j$ and $|J''|$ grow not faster than linearly in $m$; $c_j$ is bounded by a constant as $m$ grows. By (\ref{22:02}), Lemmas \ref{22:00} and \ref{Gammacontr}
\begin{equation}\label{22:16}
\begin{gathered}
 \frac{d}{ds}\biggr|_{s=0}
\frac{\mathcal{M}\mathcal{E}^{cusp}
(s;\sigma_{\tau(m),k})}{\Gamma(s)} = - \sum_{j \in J'} \left( \frac{2 c_j}{b_j} \log \Gamma(a_j + b_j \lambda_{\tau(m),k}) + c_j \cdot C_j(\psi)\right) - \\ \sum_{j \in J''} \frac{2 c_j}{e_j} \log (d_j/e_j + \lambda_{\tau(m),k}).
\end{gathered}
\end{equation}
By (\ref{hatehate}), 
$$a_j + b_j \lambda_{\tau(m),k}=O(m),$$
hence 
\begin{equation}\label{22:12}
\log \Gamma(a_j + b_j \lambda_{\tau(m),k}) = O(m \cdot \log m) 
\end{equation}
by the Stirling's formula. On the other hand, 
\begin{equation}\label{22:15}
\sum_{j \in J''} \frac{2}{e_j} \log (d_j/e_j + \lambda_{\tau(m),k}) = O(m \cdot \log m).
\end{equation}
Putting together (\ref{BIOLOGICAL_TRASH}), (\ref{22:16}), (\ref{22:12}) and (\ref{22:15}) proves Theorem \ref{adacupcakeprincess}.
\end{proof}

\bibliography{foo}{}
\bibliographystyle{alpha}
\end{document}